\DeclareMathAlphabet{\mathbfsf}{\encodingdefault}{\sfdefault}{bx}{n}
\theoremstyle{definition}
\theoremstyle{plain}
\newtheorem{lem}{Lemma}
\newtheorem{cor}{Corollary}
\newtheorem{prop}{Proposition}
\theoremstyle{remark}
\newtheorem*{remark}{Remark}
\pgfplotsset{compat=newest}
\pgfplotsset{plot coordinates/math parser=false}
\newlength\figureheight
\newlength\figurewidth
\pgfplotsset{every axis plot/.append style={line width=1.5pt},
    legend style={font=\footnotesize, 
        text height=1.0ex,
        draw=black,
        fill=white,
        legend cell align=left}}
\DeclareAcronym{ACA}
{
    short = ACA ,
    long = adaptive cross approximation
}
\DeclareAcronym{EFIE}
{
    short =  EFIE ,
    long = electric field integral equation
}
\DeclareAcronym{CFIE}
{
    short =  CFIE ,
    long = combined field integral equation
}
\DeclareAcronym{SPD}
{
    short =  SPD ,
    long = {symmetric, positive definite}
}
\DeclareAcronym{SPSD}
{
    short =  SPD ,
    long = {symmetric, positive semi-definite}
}
\DeclareAcronym{PEC}
{
    short =  PEC ,
    long = perfectly electrically conducting
}
\DeclareAcronym{RWG}
{
    short = RWG ,
    long = Rao-Wilton-Glisson
} 
\DeclareAcronym{BC}
{
    short = BC ,
    long = Buffa-Christiansen
}
\DeclareAcronym{SVD}
{
    short = SVD ,
    long = singular value decomposition
}
\DeclareAcronym{CG}
{
    short = CG ,
    long = conjugate gradient
} 
\DeclareAcronym{PCG}
{
    short = PCG ,
    long = preconditioned conjugate gradient
} 
\DeclareAcronym{CGS}
{
    short = CGS ,
    long = conjugate gradient squared
}
\DeclareAcronym{CMP}
{
    short = CMP ,
    long = Calderón multiplicative preconditioner
} 
\DeclareAcronym{RFCMP}
{
    short = RF-CMP ,
    long = refinement-free Calderón multiplicative preconditioner
} 
\DeclareAcronym{HPD}
{
    short = HPD ,
    long = {Hermitian, positive definite}
} 
\DeclareAcronym{MLFMM}
{
    short = MLFMM ,
    long = {multilevel fast multipole method}
} 
\Crefname{defn}{definition}{definitions}
\Crefname{defn}{Definition}{Definitions}
\Crefname{asm}{assumption}{assumptions}
\Crefname{asm}{Assumption}{Assumptions}
\crefname{lem}{lemma}{lemmas} 
\Crefname{lem}{Lemma}{Lemmas}
\crefname{prop}{proposition}{propositions} 
\Crefname{prop}{Proposition}{Propositions}
\crefname{thm}{theorem}{theorms} 
\Crefname{thm}{Theorem}{Theorms}
\crefname{cor}{corollary}{corollaries}
\Crefname{cor}{Corollary}{Corollaries}
\newcounter{subequation}
\newlength\mtabskip\mtabskip=-1.25cm
\newcommand\eqnCnt[1][]{ %
    \refstepcounter{subequation} %
    \begin{align}#1\end{align} %
    \addtocounter{equation}{-1}}
\def\mtabLong{long}
\newenvironment{mtabular}[2][\empty]{ %
    \def\@xarraycr{ %
        \stepcounter{equation} %
        \setcounter{subequation}{0} %
        \@ifnextchar[\@argarraycr{\@argarraycr[\mtabskip]}}
    \let\theoldequation\theequation %
    \renewcommand\theequation{\theoldequation.\alph{subequation}}
    \edef\mtabOption{#1}
    \setcounter{subequation}{0} %
    \tabcolsep=0pt
    \ifx\mtabOption\mtabLong\longtable{#2}\else\tabular{#2}\fi %
}{ %
\ifx\mtabOption\mtabLong\endlongtable\else\endtabular\fi %
\let\theequation\theoldequation %
\stepcounter{equation}}
\newcommand{\mr}{\mathrm}
\newcommand{\mc}{\mathcal}
\newcommand{\wt}[1]{{\widetilde{#1}}}
\newcommand{\veg}[1]{\bm{#1}}     
\newcommand{\mat}[1]{\mathsfbfit{#1}} 
\renewcommand{\vec}[1]{\mathsfbfit{#1}} 
\newcommand{\op}[1]{\mathcal{#1}} 
\newcommand{\vecop}[1]{\bm{\mathcal{#1}}} 
\newcommand{\mel}[1]{\mathsfit{#1}} 
\newcommand{\matel}[1]{\begin{bmatrix} #1 \end{bmatrix}}
\newcommand{\vecel}[1]{\begin{bmatrix} #1 \end{bmatrix}}
\newcommand{\matI}{\mathbfsf{I}}
\newcommand{\matO}{\mathbfsf{0}}
\newcommand{\vecO}{\matO}
\newcommand{\vecI}{\mathbfsf{1}}
\newcommand{\n}{\hat{\bm{n}}}
\newcommand{\dd}{\mathrm{d}}  
\newcommand{\ii}{\mathrm{i}}   
\newcommand{\im}{\mathrm{i}}  
\newcommand{\e}{\mathrm{e}}
\newcommand{\C}{\mathbb{C}}
\newcommand{\N}{\mathbb{N}}
\newcommand{\R}{\mathbb{R}}
\DeclarePairedDelimiter{\abs}{\lvert}{\rvert}
\DeclarePairedDelimiter{\norm}{\lVert}{\rVert}
\DeclareMathOperator{\cond}{cond}
\DeclareMathOperator{\Null}{null}
\DeclareMathOperator{\Span}{span}
\newcommand{\T}{\mr{T}}
\newcommand\restr[2]{{
        \left.\kern-\nulldelimiterspace 
        #1 
        \vphantom{|} 
        \right|_{#2} 
}}
\newcommand\rst[3]{{
        \left.\kern-\nulldelimiterspace 
        #1 
        \vphantom{|} 
        \right|_{#2}^{#3} 
}}
\newcolumntype {n}{c}
\newcolumntype {N}{>{\small}c}
\newcolumntype {L}{>{\small}l}
\newcolumntype {F}{>{\footnotesize}c}
\newcolumntype {v}[1]{>{\raggedright \hspace {0pt}} p {#1}}
\newcolumntype {V}[1]{>{\small \raggedright \hspace {0pt}} p {#1}}
\newcolumntype{d}[1]{>{\DC@{.}{.}{#1}}c<{\DC@end}}
\newcolumntype{R}[1]{%
    >{\begin{turn}{90}\begin{minipage}{#1}\small\raggedright\hspace{0pt}}l%
            <{\end{minipage}\end{turn}}%
}
\newcommand{\formatedtime}[3]{#1:#2:#3}
\newcommand{\TA}{\vecop T_{\kern-4pt\mr{A}}}
\newcommand{\TPhi}{\vecop T_{\kern-4pt\Phiup}}
\newcommand{\matTA}{\mat T_\mr{A}}
\newcommand{\matTPhi}{\mat T_\Phiup}
\newcommand{\matL}{\mat{\Lambda}}   
\newcommand{\matS}{\mat{\Sigma}}
\newcommand{\Pl}{\mat P_{\upLambda}}
\newcommand{\Ps}{\mat P_{\upSigma}}
\newcommand{\Ph}{\mat P_{\mr H}}
\newcommand{\Plh}{\mat P_{\upLambda   \mr H}}
\newcommand{\wveg}[1]{\widetilde{\veg #1}}
\newcommand{\omat}[1]{\mathring{\mat #1}}
\newcommand{\hmat}[1]{\hat{\mat #1}}
\newcommand{\wmat}[1]{\wt{\mat #1}}
\newcommand{\htmat}[1]{\hat{\widetilde{\mat #1}}}
\newcommand{\cmat}[1]{\check{\mat #1}}
\newcommand{\cwmat}[1]{\check{\widetilde{\mat #1}}}
\newcommand{\whmat}[1]{\widehat{\mat #1}}
\renewcommand{\mel}[1]{\mathbb{#1}}
\newcommand{\bmatL}{\breve{\matL}}
\newcommand{\Po}{\mat P_{\mr o}}
\newcommand{\Pm}{\mat P_{\mr m}}
\newcommand{\PmL}{\mat P_{\mr m \upLambda}}
\newcommand{\PmS}{\mat P_{\mr m \upSigma}}
\newcommand{\bPmS}{{\mat P}_{\mr m \upSigma}}
\newcommand{\bPm}{{\mat P}_{\mr m}}
\newcommand{\bPo}{\Po}
\newcommand{\Pgs}{\mat P_{\mr g \upSigma}}
\newcommand{\Psh}{\mat P_{\upSigma  \mr H}}
\newcommand{\oL}{\vecI_\upLambda}
\newcommand{\oS}{\vecI_\upSigma}
\newcommand{\ooL}{\vecO_\upLambda}
\newcommand{\Laplace}{\Deltaup}
\renewcommand{\mathit}{}
\begin{document}	
   \begin{frontmatter}

    \title{On a Refinement-Free Calderón Multiplicative Preconditioner for the Electric Field Integral Equation}
    
    \author[hft,polito]{S.~B.~Adrian}
    \ead{simon.adrian@tum.de}
    \author[telecom,polito]{F.~P.~Andriulli}
    \ead{francesco.andriulli@polito.it}
    \author[hft]{T.~F.~Eibert}
    \ead{eibert@tum.de}
    
    \address[hft]{Technical University of Munich, Arcisstr. 21, 80333, Munich, Germany}
    \address[telecom]{École Nationale Supérieure Mines-Télécom Atlantique Bretagne Pays de la Loire, 29238, Brest, France}
    \address[polito]{Politecnico di Torino, 10129, Turin, Italy}
    
    \begin{abstract}
        We present a Calderón preconditioner for the \ac{EFIE}, which does not require a barycentric refinement of the mesh and which yields a \ac{HPD} system matrix allowing for the usage of the \ac{CG} solver. The resulting discrete equation system is immune to the low-frequency and the dense-discretization breakdown and, in contrast to existing Calderón preconditioners, no second discretization of the \ac{EFIE} operator with \ac{BC} functions is necessary. This preconditioner is obtained by leveraging on spectral equivalences between (scalar) integral operators, namely the single layer and the hypersingular operator known from electrostatics, on the one hand, and the Laplace-Beltrami operator on the other hand.
        Since our approach incorporates Helmholtz projectors, there is no search for global loops necessary and thus our method remains stable on multiply connected geometries.
        The numerical results demonstrate the effectiveness of this approach for both canonical and realistic (multi-scale) problems.
    \end{abstract}
    
    \begin{keyword}
        Electric field integral equation (\acs{EFIE}) \sep Calderón preconditioning \sep preconditioner \sep hierarchical basis \sep multilevel \sep wavelet 
    \end{keyword}
\end{frontmatter}

\acresetall %

\section{Introduction}
The \ac{EFIE}, which is used for solving electromagnetic scattering and radiation problems, results in an ill-conditioned linear system of equations when discretized with \ac{RWG} basis functions---the common choice in standard codes. The ill-conditioning stems from two issues: the low-frequency breakdown, which is due to the different scaling of the vector and the scalar potential in frequency, and the dense-discretization breakdown, which is due to the fact that the vector and the scalar potential are pseudo-differential operators of negative and positive order~\cite{nedelec_acoustic_2001}, a property that leads in general to ill-conditioned system matrices. Altogether, the condition number of the \ac{EFIE} system matrix grows as $(kh)^{-2}$, where $k$ is the wavenumber and $h$ is the average edge length of the mesh, leading to slowly or non-converging iterative solvers \cite{saad_iterative_2003}.

The low-frequency breakdown has been overcome in the past by using explicit quasi-Helmholtz decompositions such as the loop-star or the loop-tree decomposition \cite{wilton_improving_1981,wu_study_1995,burton_study_1995,vecchi_loopstar_1999, miano_surface_2005}. While these decompositions cure the low-frequency breakdown, the dense-discretization breakdown persists and is even worsened to a $1/h^3$ scaling of the condition number in the case of the loop-star decomposition \cite{andriulli_loopstar_2012}. Some other low-frequency stable methods have been proposed in the past such as the augmented \ac{EFIE}, the rearranged loop-tree decomposition, or the augmented EFIE with normally constrained
magnetic field and static charge extraction \cite{zhao_integral_2000,qian_augmented_2008, cheng_augmented_2015}, yet all of them suffer from $h$-ill-conditioning.

Different strategies have been presented to overcome the dense-discretization breakdown. A first class of techniques relies on algebraic strategies such as the incomplete LU factorization \cite{sertel_incomplete_2000, carpentieri_symmetric_2012}, sparse approximate inverse  \cite{carpentieri_combining_2005, pan_sparse_2014}, or near-range preconditioners \cite{wiedenmann_effect_2013}. While they improve the conditioning, the condition number still grows with decreasing $h$.

In recent years more elaborate explicit quasi-Helmholtz decompositions have been presented, the so-called hierarchical basis preconditioners for both structured \cite{vipiana_multiresolution_2005, andriulli_multiresolution_2007, chen_multiresolution_2009, andriulli_solving_2010} and unstructured meshes \cite{andriulli_hierarchical_2008,adrian_hierarchical_2017} (in the mathematical community, the hierarchical basis preconditioners are typically termed multilevel or prewavelet preconditioners). The best of these methods can yield a condition number that only grows logarithmically in $1/h$ \cite{andriulli_solving_2010, adrian_hierarchical_2017} meaning that there is still space left for improvement.

In fact, Calderón identity-based preconditioners yield a condition number that has an upper bound independent from $h$ \cite{christiansen_preconditioner_2002, buffa_dual_2007,andriulli_multiplicative_2008,contopanagos_wellconditioned_2002,adams_physical_2004}. In the static limit, however, the Calderón strategies stop working due to numerical cancellation in both the right-hand side excitation vector and the unknown current, since solenoidal and non-solenoidal components scale differently in $k$ \cite{zhang_magnetic_2003}. Explicit quasi-Helmholtz decompositions do not suffer from this cancellation since the solenoidal and non-solenoidal components are stored separately. To make Calderón preconditioners stable in the static limit, one could combine the \ac{CMP} with an explicit quasi-Helmholtz decomposition.

Such an approach has a severe downside: if the geometry is multiply connected, then the quasi-harmonic global loop functions have to be added to the basis of the decomposition~\cite{vecchi_loopstar_1999}. In contrast to loop, star, or tree functions, (or any of the hierarchical bases mentioned here), the construction of the global loops becomes costly if the genus $g$ is proportional to the number of unknowns $N$ resulting in the overall complexity $\mc O(N^2 \log(N))$, where $N$ is the number of unknowns (see, for example, the discussion in \cite{adrian_hierarchical_2014a}). In order to avoid the construction of the global loops, a modified \ac{CMP} has been presented which leverages on an implicit quasi-Helmholtz decomposition based on projectors \cite{andriulli_wellconditioned_2013a}. These projectors require the application of the inverse primal (i.e.,  cell-based) and the inverse dual (i.e., vertex-based) graph Laplacian, a task for which blackbox-like preconditioners such as algebraic multigrid methods can be used for rapidly obtaining the inverse. 

In order to avoid the inversion of graph Laplacians, one could think that an alternative might be the Calderón preconditioner combined with an explicit loop-star quasi-Helmholtz decomposition as described in the penultimate paragraph, at least if no global loops are present. However, the inverse Gram matrices appearing in such a scheme are all spectrally equivalent to discretized Laplace-Beltrami operators. Different from the graph Laplacians, these Gram matrices are not symmetric since the loop-star basis is applied to a mixed Gram matrix, that is, \ac{BC} functions are used as expansion and rotated \ac{RWG} functions are used as testing functions \cite{andriulli_loopstar_2012, andriulli_wellconditioned_2013a}. In general, this complication makes it more challenging to stably invert these Gram matrices compared with the graph Laplacians of the Helmholtz projectors since, for example, many algebraic multigrid preconditioners require symmetric matrices.

The work in \cite{andriulli_multiplicative_2008,andriulli_wellconditioned_2013a} demonstrated a Calderón scheme that can be relatively easily integrated in existing codes. Instead of discretizing the operator on the standard mesh with \ac{RWG} and \ac{BC} functions, only a single discretization with \ac{RWG} functions on the barycentrically refined mesh is necessary. The disadvantage of this approach is that the memory consumption as well as the costs for a single-matrix vector product are increased by a factor of six. %

In this work, we propose a \ac{RFCMP} for the \ac{EFIE}. In contrast to existing Calderón preconditioners, no \ac{BC} functions are employed, so that a standard discretization of the \ac{EFIE} with \ac{RWG} functions can be used. What is more, we get a system matrix, which is \ac{HPD}. We obtain this result by leveraging on spectral equivalences between the single layer and the hypersingular operator known from electrostatics on the one hand and the Laplace-Beltrami operator on the other hand. Similar to \cite{andriulli_wellconditioned_2013a}, graph Laplacians need to be inverted. Since the new system matrix is \ac{HPD}, we are allowed to employ the \ac{CG} solver. Different from other Krylov subspace methods, it guarantees convergence and has the least computational overhead. The numerical results corroborate the new formulation. Preliminary results have been presented at conferences \cite{adrian_calderon_2014, adrian_hermitian_2015}.

The paper is structured as follows. \Cref{sec:Background} sets the background and notation; \Cref{sec:TheoreticalApparatus} introduces the new formulation and provides the theoretical apparatus. Numerical results demonstrating the effectiveness of the new approach are shown in  \Cref{sec:NumericalResults}.

\section{Notation and Background}\label{sec:Background}
In the following, we denote quantities residing in $\R^3$, such as the electric field $\veg E$, with an italic, bold, serif font.
For any other vectors and matrices we use an italic, bold, sans-serif font, and we distinguish matrices from vectors by using capital letters for matrices and minuscules for vectors. 
The expression $a \lesssim b$ has to be read as $a \leq C b$, where $C>0$ is a constant independent of the mesh parameter $h$ (i.e., the average edge length).
Furthermore, $a \asymp b$ means that  $a \lesssim b$  and $b \lesssim a$ holds.
Let $f_n \in X_{f}$ and $g_n \in X_{g}$ be functions of the basis function spaces $X_{f}$ and $X_{g}$, respectively; the Gram matrix of these functions is defined and denoted as
\begin{equation}
    \matel{\mat G_{fg}}_{mn} \coloneqq \(f_m, g_n \)_{L^2}\, ,
\end{equation}
where $\(f_n, g_n \)_{L^2} \coloneqq \int_\Gamma f_n (\veg r) g_n  \(\veg r \) \dd S(\veg r)$ is the $L^2$-inner product. Furthermore, quantities related to the dual mesh are denoted with a wide tilde symbol $\wt{\phantom{\lambda}}$.

In the following, several basis functions appear. On the primal mesh, we define the piecewise linear functions
\begin{equation}
    X_{\lambda} \ni  \lambda_n(\veg r) = \begin{cases}
        1 \quad & \text{for~} \veg r \in v_n\, , \\
        0 \quad & \text{for~} \veg r \in v_m \neq v_n \, ,\\
        \text{linear} \quad & \text{elsewhere,} \label{eq:pwlf}
    \end{cases}
\end{equation}
where $v_n \subset \Gamma$ is the $n$th vertex of mesh, the piecewise constant functions
\begin{equation}
    X_{p} \ni p_n(\veg r) = \begin{cases}
        1/{A_n} \quad &\veg r \in c_n\, ,\\
        0 \quad &\text{elsewhere,}
    \end{cases}\label{eq:patchstddef}
\end{equation}
where $c_n$ denotes the domain of the $n$th cell of the mesh, and the \ac{RWG} functions
\begin{equation}
    X_{\veg f} \ni \veg f_n(\veg r) = \begin{cases}
        \dfrac{\veg r - \veg r_n^+}{2 A_{c_n^+}} \quad \text{for~}\veg r\in c_n^+ \, ,\\
        \dfrac{\veg r_n^- - \veg r}{2 A_{c_n^-}} \quad \text{for~}\veg r\in c_n^- \, , \label{eq:RWGdef}
    \end{cases}
\end{equation}
where $c_n^+$ and $c_n^-$ are the domains of the adjacent cells at the $n$th edge, $A_{c_n^+}$ and $A_{c_n^-}$ are the areas of these cells, and $\veg r_n^+$ and $\veg r_n^-$ are the position vectors of the free vertices opposite to the $n$th edge.

\begin{figure}
    \centering
    \begin{tikzpicture}
        \coordinate (L1) at (0,0);
        \coordinate (L2) at (4,-0.5);
        \coordinate (L3) at (1.5,2.5);
        \coordinate (L4) at (4.5,3.2);
        \coordinate (L5) at (-0.75,3.0);
        \coordinate (L6) at (2,-3);
        \coordinate (L7) at (-2,0);
        \coordinate (L8) at (5,-3);
        \coordinate (L9) at (7,0);
        \coordinate (L123) at (barycentric cs:L1=1,L2=1,L3=1);
        \coordinate (L135) at (barycentric cs:L1=1,L3=1,L5=1);
        \coordinate (L157) at (barycentric cs:L1=1,L5=1,L7=1);
        \coordinate (L176) at (barycentric cs:L1=1,L7=1,L6=1);
        \coordinate (L162) at (barycentric cs:L1=1,L6=1,L2=1);
        \coordinate (L268) at (barycentric cs:L2=1,L6=1,L8=1);
        \coordinate (L289) at (barycentric cs:L2=1,L8=1,L9=1);
        \coordinate (L294) at (barycentric cs:L2=1,L9=1,L4=1);
        \coordinate (L243) at (barycentric cs:L2=1,L4=1,L3=1);
        \coordinate (L345) at (barycentric cs:L3=1,L4=1,L5=1);  
      
       \begin{scope}[ultra thick]
             \draw (L1) -- coordinate[midway](L12) (L2) -- coordinate[midway](L23) (L3) -- coordinate[midway](L31) cycle;
             \draw (L2) -- (L3) -- coordinate[midway](L34) (L4) -- coordinate[midway](L42) cycle;
             \draw (L1) -- (L3) -- coordinate[midway](L35) (L5) -- coordinate[midway](L51) cycle;
             \draw (L1) -- (L2) -- coordinate[midway](L26) (L6) -- coordinate[midway](L61) cycle;
             \draw (L1) -- coordinate[midway](L15) (L5) -- coordinate[midway](L57) (L7) -- coordinate[midway](L71) cycle;
             \draw (L7) -- coordinate[midway](L76) (L6) -- coordinate[midway](L68) (L8) -- coordinate[midway](L82) (L2);
             \draw (L8) -- coordinate[midway](L89) (L9) -- coordinate[midway](L92) (L2);
             \draw (L9) -- coordinate[midway](L94) (L4);
             \draw (L4) -- coordinate[midway](L45) (L5);
        \end{scope}
        \begin{scope}
        \draw (L12) -- (L3);
        \draw (L23) -- (L1);
        \draw (L31) -- (L2);
        \draw (L42) -- (L3);
        \draw (L34) -- (L2);
        \draw (L23) -- (L4);
        \draw (L35) -- (L1);
        \draw (L51) -- (L3);
        \draw (L31) -- (L5);
        \draw (L1) -- (L26);
        \draw (L2) -- (L61);
        \draw (L6) -- (L12);
        \draw (L3) -- (L45);
        \draw (L35) -- (L4);
        \draw (L34) -- (L5);
        \draw (L1) -- (L76);
        \draw (L71) -- (L6);
        \draw (L7) -- (L61);
        \draw (L1) -- (L57);
        \draw (L5) -- (L71);
        \draw (L7) -- (L15);
        \draw (L2) -- (L68);
        \draw (L6) -- (L82);
        \draw (L8) -- (L26);
        \draw (L2) -- (L89);
        \draw (L8) -- (L92);
        \draw (L9) -- (L82);
        \draw (L2) -- (L94);
        \draw (L9) -- (L42);
        \draw (L4) -- (L92);
        \filldraw [opacity=0.5, gray] (L135) -- (L51) -- (L157) -- (L71) -- (L176) -- (L61) -- (L162) -- (L26) -- (L268) -- (L268) -- (L82) -- (L289) -- (L92) -- (L294) -- (L42) -- (L243) -- (L34) -- (L345) -- (L35) -- cycle;
        \node[draw, fill=white] at (L123) {1};
        \node[draw, fill=white] at (L12) {1/2};
        \node[draw, fill=white] at (L23) {1/2};
        \node[draw, fill=white] at (L31) {1/2};
        \node[draw, fill=white] at (L1) {1/5};
        \node[draw, fill=white] at (L2) {1/6};
        \node[draw, fill=white] at (L3) {1/4};
        \end{scope}
        \end{tikzpicture}
    \caption{Dual piecewise linear function $\wt \lambda_n$: the gray area denotes the support.}
    \label{fig:DPLF}
\end{figure}
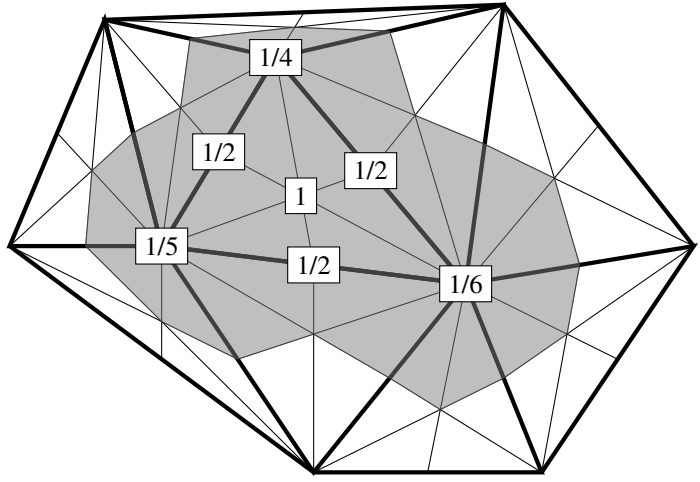
In addition, we require basis functions defined on the dual mesh: on the one hand, we need the so-called \ac{BC} functions  $\wveg f_n \in X_{\wveg f}$, which are the counterpart to the \ac{RWG} functions. Since they do not play a crucial role in this article, we refer the reader to their definition in \cite{buffa_dual_2007}.

On the other hand, we need the dual counterparts to the primal piecewise linear functions $X_{\lambda}$, which have also been defined in \cite{buffa_dual_2007}. %
These dual piecewise linear functions $\wt \lambda_n \in X_{\wt \lambda}$ are attached to the dual vertices, which are located at the barycenter of the cells of the primal mesh.
Similar to $\lambda_n$, the dual piecewise linear functions $\wt \lambda_n$ have to be zero at the boundary of their support. 
In addition, it is required that the dual piecewise linear functions form a partition of unity. Thus the value that each $\wt \lambda_n$ assumes on the primal vertices of the primal cell on which it is defined must be one over the total number of $\wt \lambda_n$ neighboring the respective primal vertex. Likewise, the value that $\wt \lambda_n$ assumes on the midpoints of the edges of the primal cell on which it is defined must be one over the total number of $\wt \lambda_n$ neighboring the respective midpoint (i.e., for closed meshes without junctions the value is always $1/2$).  To illustrate this definition, consider the example in \Cref{fig:DPLF}. The gray area depicts the support of a dual piecewise linear function~$\wt \lambda_n$. The boxes in \Cref{fig:DPLF} show the value of $\wt \lambda_n$  at the respective vertices or midpoints of the primal cell on which $\wt \lambda_n$ is defined on. Between the vertices, midpoints, and the boundary of the support, $\wt \lambda_n$ is linear.

Let $\Omega$ be a closed, perfectly electrically conducting, simply or multiply connected Lipschitz polyhedral domain and the surface $\Gamma = \partial \Omega$ be a triangulation embedded in a space described by the permittivity $\varepsilon$ and the permeability $\mu$.
An electromagnetic wave $\(\veg E^\mr{i}, \veg H^\mr{i}\)$ impinges on $\Omega$ exciting the electric surface current density $\veg j$, which radiates the scattered wave $\(\veg E^\mr{s}, \veg H^\mr{s}\)$ subject to the boundary condition 
\begin{equation}
    \n \times \veg E = \veg 0 \quad\text{for all~}\veg r\in\Gamma
\end{equation}
with the total electric field $\veg E = \veg E^\mr{i} + \veg E^\mr{s}$, Maxwell's equations\\
\noindent\begin{mtabular}{*{2}{m{0.35\linewidth}m{0.15\linewidth}}}
    \begin{equation*}
        \nabla \times \veg E = +\ii \omega \mu   \veg H\, ,
    \end{equation*}
    &
    \eqnCnt \label{eq:Maxwell1}
    &
    \begin{equation*}
        \nabla \times \veg H = -\ii \omega \varepsilon \veg E\, ,
    \end{equation*}
    &
    \eqnCnt  \label{eq:Maxwell2}
\end{mtabular}
and the Silver-Müller radiation condition \cite{silver_microwave_1984,muller_grundzuge_1948}
\begin{equation}
    \lim_{r\rightarrow \infty} \(\eta \veg H^\mr{s} \times \veg r -  r \veg E^\mr{s} \) = 0\, ,
\end{equation}
where $r=\norm{\veg r}$ and $\eta = \sqrt{\mu /\varepsilon}$ is the wave impedance.
The \ac{EFIE} operator
\begin{equation}
    \vecop T \veg  j =\( \im k \TA^k + 1/(\im k ) \TPhi^k\)\veg j\, ,
\end{equation}
where
\begin{equation}
    \TA^\kappa \veg j =  \n \times \int_\Gamma  \frac{\e^{\ii \kappa \vert \veg r - \veg r'\vert}}{4 \uppi\vert \veg r -\veg r'\vert} \veg j(\veg r') \dd S (\veg r')
\end{equation}
is the vector potential operator and
\begin{equation}
    \TPhi^\kappa \veg j =  -\n \times \nabla\int_\Gamma  \frac{\e^{\ii \kappa \vert \veg r - \veg r'\vert}}{4 \uppi \vert \veg r -\veg r'\vert} \nabla_\Gamma'\cdot\veg j(\veg r') \dd S (\veg r')
\end{equation}
is the scalar potential operator and $k$ the wavenumber, relates $\veg E^\mr{i}$ and $\veg j$ by the \ac{EFIE}
\begin{equation}
    \vecop T \veg j = - \n \times \veg E^\mr{i}\, .
\end{equation}
An $\e^{-\ii \omega t}$ time dependency is assumed and suppressed throughout the article.
The radiated wave $\(\veg E^\mr{s}, \veg H^\mr{s}\)$ can be computed by first solving for $\veg j$ and then evaluating the \ac{EFIE} forward operator.
We note that $\veg j$ is normalized with the wave impedance~$\eta$.

For obtaining a numerical solution, we apply the Petrov-Galerkin method, where we use \ac{RWG} functions $\veg f_n \in X_{\veg f}$ as expansion and rotated $\n \times \veg f_n $ as testing functions resulting in the system
\begin{equation}
    \mat T \vec j \coloneq \(\im k \matTA^k+ 1/(\im k) \matTPhi^k \) \vec j= - \vec e\, ,
\end{equation}
where 
\begin{align}
    \matel{\matTA^\kappa}_{mn} &= \(\n \times \veg f_m, \TA^\kappa \veg f_n\)_{\veg L^2}\, , \\ 
    \matel{\matTPhi^\kappa}_{mn} &= \(\n \times \veg f_m, \TPhi^\kappa \veg f_n\)_{\veg L^2}\, , \\
    \vecel{ \vec e}_n &= \(\n \times \veg f_n, \n \times \veg E^\mr{i} \)_{\veg L^2}
\end{align} 
and $\veg j \approx \sum_{n=1}^{N} \vecel{\vec j}_n \veg f_n$, where $\(\veg a, \veg b \)_{\veg L^2} = \int_\Gamma \veg a \cdot \veg b \dd S(\veg r)$ is the $\veg L^2$-inner product and $\veg L^2 = \(L^2\)^3$.
For simplifying the analysis, we deviated in \eqref{eq:RWGdef} from the original definition of the \ac{RWG} functions by not normalizing them with edge length, that is, the relationship to the original \ac{RWG} $\veg f_n^\text{Original}$ as defined in \cite{rao_electromagnetic_1982} is given by $\veg f_n \coloneqq  \veg f_n^\text{Original}/l_n$, where $l_n$ is the length of the $n$th edge.

The matrix $\mat T$ is ill-conditioned both in $k$ and $h$ \cite{christiansen_preconditioner_2002}. 
An optimal preconditioner is given by $\vecop T$ itself: the Calderón identity
\begin{equation}
    \vecop T^2  = -\vecop I/4 + \vecop K^2\, , \label{eq:VectorCalderonIdentity}
\end{equation}
where $\vecop K$ is a compact operator, leading to the (stable) discretization
\begin{equation}
    \mat G_{\n \times \veg f, \wt{\veg f}}^{-\T} \wmat T \mat G_{\n \times \veg f, \wt{\veg f}}^{-1} \mat T \label{eq:DiscreteVectorCalderón}
\end{equation}
is well-conditioned with  $\matel{\wmat T}_{mn} = \(\n \times \wt{\veg f}_m, \vecop T \wt{\veg f}_n\)_{\veg L^2}$, and $\matel{\mat G_{\n \times \veg f, \wt{\veg f}}}_{ij} = \(\n \times \veg f_i,  \wt{\veg f_j}\)_{L^2}$ and $\mat G_{\n \times \veg f, \wt{\veg f}}^{-\T} \coloneqq \(\mat G_{\n \times \veg f, \wt{\veg f}}^{-1}\)^\T$.

The matrix in \eqref{eq:DiscreteVectorCalderón} is, however, not numerically stable down to the static limit since it comprises a null space associated with the harmonic Helmholtz subspace \cite{andriulli_wellconditioned_2013a}. In addition, the excitation $\vec e$ and the unknown current vector $\vec j$ suffer from numerical cancellation.
A first approach to overcome the numerical cancellation could be to use an explicit quasi-Helmholtz decomposition.
While this could succeed in preventing the numerical cancellation and in preserving the quasi-harmonic Helmholtz subspace, it also comes with several drawbacks as will be discussed in the following.

In more detail, let $\veg \Lambda_n \in X_{\veg \Lambda}$ be loop functions,  $\veg H_n \in X_{\veg H}$ be global loops  and $\veg \Sigma_n \in X_{\veg \Sigma}$ be star functions (for a definition of these functions, see for example \cite{wu_study_1995, vecchi_loopstar_1999, wilton_improving_1981, cools_nullspaces_2009}). 
As $X_{\veg f} = X_{\veg \Lambda}\oplus X_{\veg H} \oplus  X_{\veg \Sigma}$, there are transformation matrices $\matL\in \R^{N\times N_\mr{V}}$, $\mat H \in \R^{N\times N_\mr{H}}$, and $\matS\in \R^{N\times N_\mr{C}}$ that link the expansion coefficients of the current in the loop-star basis to the expansion coefficients in the \ac{RWG} basis, that is, we have
\begin{equation}
    \vec j = \matL \vec j_{\veg \Lambda} + \mat H  \vec j_{\veg H} +  \matS \vec j_{\veg \Sigma} \label{eq:qHofj}
\end{equation}
and
\begin{equation}
    \sum_{n=1}^N \vecel{\vec j}_n \veg f_n = \sum_{n=1}^{N_\mr{V}} \vecel{\vec j_{\veg \Lambda}}_n \veg \Lambda_n + \sum_{n=1}^{N_{\mr H}} \vecel{\vec j_{\veg H}}_n \veg H_n  + \sum_{n=1}^{N_\mr{C}}  \vecel{\vec j_{\veg \Sigma}}_n \veg \Sigma_n\, ,
\end{equation} 
where $\vec j_{\veg \Lambda}$, $\vec j_{\veg H}$, and $\vec j_{\veg \Sigma}$ are the unknown vectors in the loop-star basis, and $N_\mr{V}$ is the number of vertices (inner vertices, when $\Gamma$ is an open surface), $N_\mr{C}$ the number of cells, and $N_\mr{H} =2 g$, where $g$ is the genus of $\Gamma$.
The global loops are not uniquely defined. For simplifying the analysis we assume that $\matL^\T \mat H = \vecO$ (such global loops can always be constructed, though at increased computational costs compared with the case where we do not enforce the orthogonality of the transformation matrices $\matL$ and $\mat H$).
We note that loop and star functions are not linearly independent;
the all-one vectors $\matel{ \oL}_n = 1$, $n=1,\dots, N_\mr{V}$, and $\matel{ \oS}_n = 1$, $n=1,\dots, N_\mr{C}$, are in the null spaces of $\matL$ and $\matS$, that is, $\matL \oL = \vecO $ and $\matS \oS = \vecO$.
The linear dependency generates a null space in the transformed system matrix.
To avoid this null space, the classic approach is to eliminate a loop and a star function. For the new formulation that we are presenting here no elimination is necessary.
We define the transformation matrix as $\mat Q \coloneqq \matel{\matL /\sqrt{\ii k} & \mat H /\sqrt{\ii k}  & \matS \sqrt{\ii k} }$. If we were to eliminate loop and star functions so that $\mat Q \in \C^{N \times N}$ has full rank (i.e., this follows the classical loop-star preconditioner approach),
then  $\mat Q^\T \mat T \mat Q$ is well-conditioned in frequency and 
\begin{equation}
    \(\mat Q^\T \mat G_{\n \times \veg f, \wt{\veg f}} \mat Q\)^{-\T} \mat Q^\T \wmat T  \mat Q  \(\mat Q^\T \mat G_{\n \times \veg f, \wt{\veg f}} \mat Q\)^{-1}  \mat Q^\T \mat T \mat Q \label{eq:LSCalderón}
\end{equation}
is well-conditioned in frequency down to the static limit.

There are two drawbacks: 
First, the global loops $\veg H_n$  have to be constructed, which costs, in general, $\mc O(N^2)$ for  $g \asymp N$ and $\mat H$ is dense (a sparse matrix $\mat H$ can be obtained, but then the cost for finding the global loops is $\mc O(N^3)$) (see \cite{adrian_hierarchical_2014a} and references therein).
Second, the Gram matrix $\mat Q^\T \mat G_{\n \times \veg f, \wt{\veg f}} \mat Q$ is ill-conditioned with a condition number that grows as $\mc O(1/h^2)$.
The reason for this is that loop and star functions are not $\veg L^2$-stable, their Gram matrices are equivalent to discretized Laplace-Beltrami operators, for which the condition number grows with $1/h^2$, that is, we have for the loop-loop and the star-star Gram matrix \cite{andriulli_loopstar_2012}
\begin{equation}
    \matL^\T \mat G_{\veg f\veg f} \matL = \mat \Delta \label{eq:PrimalLaplace}
\end{equation} 
and 
\begin{equation}
    \matS^\T \mat G_{\wveg f\wveg f} \matS = \wmat \Delta\, ,\label{eq:DualLaplace}
\end{equation} 
where
\begin{equation}
    \matel{\mat \Delta}_{mn} = \(\nabla_\Gamma \lambda_m, \nabla_\Gamma \lambda_n\)_{L^2}
\end{equation} 
is the Laplace-Beltrami operator discretized with piecewise linear functions $\lambda_n$, and 
\begin{equation}
    \matel{\wmat \Delta}_{mn} = \(\nabla_\Gamma \wt \lambda_m, \nabla_\Gamma \wt\lambda_n\)_{L^2}
\end{equation} 
is the Laplace-Beltrami operator discretized with dual piecewise linear functions $\wt \lambda_n$.
The matrix $\mat Q^\T \mat G_{\n \times \veg f, \wt{\veg f}} \mat Q$ is even more difficult to invert since unlike $\mat \Delta$ it is no longer a symmetric matrix anymore due to $ \mat G_{\n \times \veg f, \wt{\veg f}} $.

Recently a scheme has been presented that leverages on quasi-Helmholtz projectors \cite{andriulli_loopstar_2012,andriulli_wellconditioned_2013a}.
These projectors were defined as 
\begin{equation}
    \Ps \coloneqq \matS \( \matS^\T \matS\)^{+} \matS^\T
\end{equation}
and 
\begin{equation}
    \Plh \coloneqq \matI - \Ps\, ,
\end{equation}
where ``+'' denotes the Moore-Penrose pseudoinverse and $\matI$ is the identity matrix with dimensions fitting to the other matrices in the respective equation.
The projectors are based on the fact that $\matL^\T \matS = \matO$, $\matL^\T \mat H = \matO$, and $\matS^\T \mat H = \matO$.
Hence, $\Ps \vec j$ yields the non-solenoidal part of $\veg j$ expressed in \ac{RWG} expansion coefficients as can  be seen by considering \eqref{eq:qHofj}, that is, $ \Ps \vec j = \matS \vec j_{\veg \Sigma}$ since $\Ps \matS = \matS$ and $\Ps \matL = \matO$ and $\Ps \mat H = \matO$. The subscript of $\Plh$ denotes the fact that it projects to the union of the solenoidal and quasi-harmonic Helmholtz subspace.
For $\wt{\veg f}_n$, dual projectors are defined as 
\begin{equation}
    \Pl \coloneqq \matL \( \matL^\T \matL\)^{+} \matL^\T \label{eq:Pl}
\end{equation}   
and 
\begin{equation}
    \Psh \coloneqq \matI - \Pl \, .
\end{equation}
This allows defining the primal $\mat M \coloneqq \Plh / \sqrt{k} + \im  \Ps \sqrt{k}$ and the dual decomposition operator $\wmat M \coloneqq \Psh / \sqrt{k} + \im  \Pl \sqrt{k}$.
Then the matrix 
\begin{equation}
    \mat G_{\n \times \veg f, \wt{\veg f}}^{-\T} \(\wmat M \wmat T \wmat M \) \mat G_{\n \times \veg f, \wt{\veg f}}^{-1}  \(\mat M \mat T \mat M\) \label{eq:2013}
\end{equation}
is well-conditioned \cite{andriulli_wellconditioned_2013a}.
In contrast to \eqref{eq:LSCalderón}, the costly global loop finding and construction of the (dense) matrix $\mat H$ is avoided. 
Instead of  dealing with the non-symmetric matrix $\mat Q^\T \mat G_{\n \times \veg f, \wt{\veg f}} \mat Q$, in \eqref{eq:2013} only symmetric, positive semi-definite graph Laplacians $\matL^\T \matL$ (vertex-based)  and $\matS^\T \matS$ (cell-based) appear.
As has been pointed out~\cite{andriulli_loopstar_2012,andriulli_wellconditioned_2013a}, a plethora of (black box) algorithms exists for inverting these matrices efficiently.

\section{New Formulation}\label{sec:TheoreticalApparatus}
The formulation for which we are going to show that the system matrix is well-conditioned in the static limit reads
\begin{equation}
    \bPo^\dagger \mat T^\dagger \bPm \mat T \bPo \vec i  = -\bPo^\dagger \mat T^\dagger \bPm \vec e\, , \label{eq:NewFormulation}
\end{equation}
where the outer matrix $\Po$ is
\begin{equation}
    \Po \coloneq \Plh / \sqrt{k} + \im \Pgs \sqrt{k} 
\end{equation} 
with
\begin{equation}
    \Pgs \coloneq \matS \(\matS^\T \matS\)^+ \mat G_{\wt \lambda p}^{-1} \matS^\T\, , \label{eq:Pgs}\\
\end{equation}
and the middle matrix $\Pm$ is
\begin{equation}
    \Pm \coloneq \PmL/ k + \bPmS k
\end{equation}
using
\begin{align}
    \PmL & \coloneq  \matL \mat G_{\lambda \lambda}^{-1} \matL^\T + \Plh\, ,  \label{eq:PmL}\\
    \bPmS  &\coloneq \matS \(\matS^\T \matS\)^+ \mat G_{pp}^{-1} \(\matS^\T \matS\)^+  \matS^\T  \, , 
\end{align}
and with  the conjugate transpose $\Po^\dagger  = \overline{\Po}^\T$. The unknown vector $\vec j$ is recovered by $\vec j  = \Po \vec i $.

The use of the imaginary unit $+\im$ in the definition of $\Po$ is motivated for the same reason as it was for $\mat M$: to prevent the numerical cancellation due to different scaling of the solenoidal and non-solenoidal components in $\vec e$ and $\vec j$ (for a detailed analysis, see \cite{andriulli_wellconditioned_2013a}).

In the new formulation, dual basis functions only appear in the mixed Gram matrix $\mat G_{\wt \lambda p}$. For this matrix, we have the analytic formula
\begin{equation}
    \matel{\mat G_{\wt \lambda p}}_{mn} = \begin{cases}
        \dfrac{2}{18} \(\dfrac{9}{2} + \displaystyle\sum_{i=1}^3 \dfrac{1}{\mathit{NoC}(\mathit{VoC}(m,i))}  \) \quad & \text{if $m=n$}\, ,\\
        \dfrac{2}{18} \(\dfrac{1}{2} + \dfrac{1}{\mathit{NoC}(e^+)} + \dfrac{1}{\mathit{NoC}(e^-)} \) \quad &\text{if cells $m$ and $n$ share edge $e$}\, ,\\ 
        \dfrac{2}{18} \(\dfrac{1}{\mathit{NoC}(v)} \) \quad &\text{if cells $m$ and $n$ are only connected by vertex $v$}\, ,\\ 
        0 \quad &\text{otherwise,}
    \end{cases}
\end{equation}
where the function $\mathit{NoC}(v)$ returns the number of cells attached to $v$th vertex of the mesh, the function $\mathit{VoC}(c,i)$ returns the global index of the $i$th vertex of the $c$th cell, and $e^+$ and $e^-$ are the indices of the vertices of the $e$th edge.

For proving that the system matrix in \eqref{eq:NewFormulation} is well-conditioned, we first decompose the \ac{EFIE} into two scalar operators.
In the static limit $k \rightarrow 0$ and for simply connected geometries (i.e., the quasi-harmonic Helmholtz subspace is not present), we have the well-established equality (see, for example, a more detailed derivation in \cite{adrian_hierarchical_2017})
\begin{equation}
    \lim_{k \rightarrow 0} \mat Q^\T  \mat T \mat Q 
    = \begin{bmatrix}
        \matL^\T \matTA^0 \matL & \vecO \\
        \vecO & \matS^\T \matTPhi^0 \matS 
    \end{bmatrix}
    = \begin{bmatrix}
        \mat W & \vecO \\
        \vecO & \matS^\T \matS \mat V \matS^\T\matS 
    \end{bmatrix}\, , \label{eq:StaticLS}
\end{equation}
with
\begin{equation}
    \matel{\mat W}_{mn} = \(\lambda_m, \op W \lambda_n \)_{L^2} \label{eq:DefW}
\end{equation}
and
\begin{equation}
    \matel{\mat V}_{mn} = \(p_m, \op V p_n \)_{L^2}\, ,\label{eq:DefV}
\end{equation}
where
\begin{equation}
    \op W \lambda = \n_{\veg r} \cdot \nabla_\Gamma \times \int_\Gamma \frac{1}{4 \uppi \abs{\veg r - \veg r'}} \nabla_\Gamma' \times \n_{\veg r'} \lambda(\veg r') \dd S(\veg r') \label{eq:W}
\end{equation}
is the hypersingular operator, and
\begin{equation}
    \op V p = \int_\Gamma \frac{1}{4 \uppi \abs{\veg r - \veg r'}} p(\veg r') \dd S(\veg r')
\end{equation}
is the single layer operator; both operators are well-known from electrostatics.
A detailed derivation of the equality 
\begin{equation}
    \matTPhi^0 = \matS \mat V \matS^\T \label{eq:TPhiandV}
\end{equation} 
used in \eqref{eq:StaticLS} can be found in Section~3.1 in \cite{adrian_hierarchical_2017}.
From \eqref{eq:StaticLS}, \eqref{eq:DefW}, and \eqref{eq:DefV}, we can see that $\vecop T$ discretized with the loop-star basis (where the loop and the star functions need to be appropriately rescaled in $k$) decouples into the scalar operators $\op W$ and $\op V$ in the static limit, where the latter is accompanied by the graph Laplacian $\matS^\T \matS$.
Given that this graph Laplacian can be removed by using its (pseudo-)inverse, it remains the task to find preconditioners for $\mat W$ and for $\mat V$.

In order to show the well-conditioning of \eqref{eq:NewFormulation} in the static limit, we need to establish spectral equivalences between $\mat W$ and $\mat \Delta$ as well as $\mat V$ and $\wmat \Delta$.
These equivalences will be established by using Rayleigh quotients: we call two symmetric, positive definite matrices $\mat A, \mat B \in \R^{n\times n}$ spectrally equivalent if they satisfy
\begin{equation}
    \vec x^\T \mat A \vec x \asymp \vec x^\T \mat B \vec x \quad\text{for all~} \vec x \in \R^n\, .
\end{equation}
An immediate implication of this inequality is that 
\begin{equation}
    \cond(\mat B^{-1} \mat A)  < C\, ,
\end{equation}
where $C \in \R^+$ is a constant independent from $h$.

What makes it difficult is that for example $\mat W$ possesses a null space and if we need to form the inverse of a product of matrices where some matrices have a null space and some matrices do not, then the inverse of such a product cannot be simplified easily. To avoid the null space issue, we follow a standard approach by defining operators that are identical to $\op W$ and $\Laplace_\Gamma$ for mean-value free functions, but have no null space \cite{steinbach_numerical_2010}:
we introduce the operator $\hat{\op W} : H^{1/2} \rightarrow H^{-1/2}$ defined by the bilinear form
\begin{equation}
    \(v, \hat{\op W}w\)_{L^2(\Gamma)} \coloneqq \(v, \op W w\)_{L^2(\Gamma)}  + \(1, w\)_{L^2(\Gamma)}  \(1, v\)_{L^2(\Gamma)}  \label{eq:DeflectedW}
\end{equation}
for all $w,v \in H^{1/2}(\Gamma)$. 
We note that the unique solution $w$ of 
\begin{equation}
    \hat{\op W}w = g \label{eq:hatWg}
\end{equation}
is also a solution of 
\begin{equation}
    \op W w = g
\end{equation}   
when $g$ satisfies the solvability condition $\int_\Gamma g \dd S(\veg r') = 0$.
This can be seen when $v=1$ in \eqref{eq:DeflectedW} and we find that $(1, \hat{\op W} w)_{L^2(\Gamma)} =  (1, g)_{L^2(\Gamma)}$ reduces to $\(1,w\)_{L^2(\Gamma)}  \(1,1\)_{L^2(\Gamma)}=0$, which means that the solution of $\eqref{eq:hatWg}$ is mean-value if $g$ is so.
Likewise, if we let $\Laplace_\Gamma : H^1/\R \rightarrow H^{-1}$ be the Laplace-Beltrami operator~\cite{nedelec_acoustic_2001}, then we consider the $H^1$-elliptic modified Laplace-Beltrami operator $\hat{\Laplace}_\Gamma$ to be defined by the bilinear form
\begin{equation}
    \(v, -\hat{\Laplace}_\Gamma w\)_{L^2(\Gamma)} \coloneqq \(\nabla_\Gamma w, \nabla_\Gamma v\)_{L^2(\Gamma)} +  \(1, w\)_{L^2(\Gamma)}  \(1, v\)_{L^2(\Gamma)}  \label{eq:DeflectedLaplacian}\, .
\end{equation}
When $\lambda_i$ are used for the discretization of $\hat{\op W}$ and $\hat \Laplace_\Gamma$, the resulting matrices are
\begin{equation}
    \hmat W = \mat W + \mat G_{\lambda\lambda}^\T \oL \oL^\T  \mat G_{\lambda\lambda} \label{eq:DiscreteModW}
\end{equation}
and
\begin{equation}
    \hmat \Delta = \mat \Delta + \mat G_{\lambda\lambda}^\T \oL \oL^\T  \mat G_{\lambda\lambda} \, .
\end{equation}
If  $\wt \lambda_i$ are used  for the discretization, the resulting matrices read
\begin{equation}
    \htmat W = \wmat W + \mat G_{\wt\lambda\wt\lambda}^\T \oS \oS^\T  \mat G_{\wt\lambda\wt\lambda} 
\end{equation}
and
\begin{equation}
    \htmat \Delta = \wmat \Delta + \mat G_{\wt\lambda\wt\lambda}^\T \oS \oS^\T  \mat G_{\wt\lambda\wt\lambda} \, .
\end{equation}
In addition, we need to define
\begin{equation}
    \cmat W \coloneqq \mat W + \oL \oL^\T h^4
\end{equation}
and
\begin{equation}
    \cmat \Delta \coloneqq \mat \Delta + \oL \oL^\T  h^4 \, .
\end{equation}

In the following, we assume the spectral equivalences
\begin{equation}
    \vec x^\T \hmat W \mat G_{\lambda \lambda}^{-1} \hmat W\vec x \asymp \vec x^\T \hmat \Delta \vec x  \quad\text{for all~} \vec x \in \R^{N_\mr{V}} \label{eq:WWDelta}
\end{equation}
and 
\begin{equation}
    \vec x^\T \htmat W \mat G_{\wt \lambda \wt\lambda}^{-1} \htmat W \vec x \asymp \vec x^\T \htmat \Delta \vec x  \quad\text{for all~} \vec x \in \R^{N_\mr{C}}\, . \label{eq:WWdualDelta}
\end{equation}

\begin{remark}
    As shown in the Appendix for \eqref{eq:WWDelta}, such a spectral equivalence can be established when we have a nested sequence of function spaces by using a hierarchical basis. Thus in this section, we assume a structured mesh refinement, which gives rise to a nested sequence of piecewise constant and a piecewise linear function space, respectively.
    We note that the structured refinement implies a quasi-uniform family of meshes, that is
    \begin{equation}
        h_\text{max}/h_\text{min} < c_\text{Q}\, , 
    \end{equation}
    where $h_\text{max}$ is the length of the largest edge, $h_\text{min}$ is the length of the smallest edge and $c_\text{Q}$ is a constant independent from $h$.
    Similarly to preconditioning strategies such as algebraic multigrid, the new formulation remains effective even on unstructured meshes as can be seen from the numerical results in \Cref{sec:NumericalResults}.
    Moreover, we note that $\mat G_{\lambda \lambda}^{-1} $ could be replaced by $ h^2\, \mat I$. By using  $\mat G_{\lambda \lambda}^{-1} $, however, the bounding constants in \eqref{eq:WWDelta} are typically sharper when the bounding constant $c_\text{Q}$ is large and thus the overall condition number is reduced.
\end{remark}

For proving the well-conditionedness of new formulation, we start with making the frequency dependency explicit by considering
\begin{multline}
    \bPo^\dagger \mat T^\dagger \bPm \mat T \bPo = \Plh^\dagger \(\matTA^k\)^\dagger \PmL \matTA^k \Plh + \Pgs^\dagger \(\matTPhi^k\)^\dagger \bPmS \matTPhi^k \Pgs \\
    + k^2\Pgs^\dagger \(\matTA^k\)^\dagger \PmL \matTA^k \Pgs + k^4 \Pgs^\dagger  \(\matTA^k\)^\dagger \bPmS \matTA^k \Pgs \\
    +\im k \Plh^\dagger \(\matTA^k\)^\dagger \PmL \matTA^k \Pgs
    -\im k \Pgs^\dagger \(\matTA^k\)^\dagger \PmL \matTA^k \Plh \\
    + \im k^3 \Plh^\dagger \(\matTA^k\)^\dagger \bPmS \matTA^k \Pgs 
    -\im k^3  \Pgs^\dagger \(\matTA^k\)^\dagger \bPmS \matTA^k \Plh \\
    - \im k \Plh^\dagger \(\matTA^k\)^\dagger \bPmS \matTPhi^k \Pgs 
    + \im k\Pgs^\dagger \(\matTPhi^k\)^\dagger \bPmS \matTA^k \Plh \, .\label{eq:LastProofBigUmformung}
\end{multline}
Thus we find in the static limit
\begin{equation}
    \lim_{k \rightarrow 0}     \bPo^\dagger \mat T^\dagger \bPm \mat T \bPo  =  \Plh^\dagger \(\matTA^0\)^\dagger \PmL \matTA^0 \Plh + \Pgs^\dagger \(\matTPhi^0\)^\dagger \bPmS \matTPhi^0 \Pgs\, .
\end{equation}
Due to the orthogonality of the null spaces of $\Plh$ and $\Pgs$, the matrix $\bPo^\dagger \mat T^\dagger \bPm \mat T \bPo $ is well-conditioned if $ \Plh^\dagger \(\matTA^0\)^\dagger \PmL \matTA^0 \Plh$ and $\Pgs^\dagger \(\matTPhi^0\)^\dagger \bPmS \matTPhi^0 \Pgs$ are well-conditioned, respectively. The well-conditionedness of the latter two matrices is proved in \Cref{sec:VectorPotential} and \Cref{sec:ScalarPotential}.

Before proving the well-conditionedness rigorously, we like to give a heuristic argument by assigning orders to the matrices corresponding to the order of the underlying pseudo-differential operator. Thus, we have that $\matTA$ is of order $-1$ and $\matTPhi$,  $\matL$, and $\matS$ are each of order $+1$. The Gram matrices, $\Plh$, and $\Pgs$ are of order $0$. By simply counting the orders, we note that the total order of $ \Plh^\dagger \(\matTA^0\)^\dagger \PmL \matTA^0 \Plh$ and of $\Pgs^\dagger \(\matTPhi^0\)^\dagger \bPmS \matTPhi^0 \Pgs$, respectively, is~$0$. Consequently, the order of the sum of these two matrices is also~$0$. Since pseudo-differential operators of order~$0$ result in a well-conditioned matrix if an $L^2$-stable basis has been used for their discretization, this suggests that the new formulation is well-conditioned.

\subsection{Vector Potential}\label{sec:VectorPotential}
\subsubsection{Simply Connected Case}
Here we prove that 
\begin{equation}
    \Pl \(\matTA^k\)^\dagger \matL \mat G_{\lambda \lambda}^{-1} \matL^\T \matTA^k \Pl 
\end{equation}
is well-conditioned  up to its null space.
To this end, we first need to establish several spectral equivalences.

\begin{lem}  \label{lem:HatandCheckEquivs}
    We have the spectral equivalences
    \begin{equation}
        \vec x^\T \hmat \Delta  \vec x \asymp  \vec x^\T  \cmat \Delta \vec x\quad\text{for all~} \vec  x \in \R^{N_\mr{V}} \label{eq:hDeltaEQcDelta}
    \end{equation}   
    and 
    \begin{equation}
        \vec x^\T \hmat W \vec x \asymp \vec x^\T  \cmat W \vec x \quad\text{for all~} \vec  x \in \R^{N_\mr{V}} \, . \label{eq:hWEQcW}
    \end{equation}   
\end{lem}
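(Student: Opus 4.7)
The plan is to pass from the discrete Rayleigh quotients to the associated piecewise linear function $u_h \coloneqq \sum_m x_m \lambda_m \in X_\lambda$ and exploit the rank-one nature of both perturbations of $\mat \Delta$ (resp.\ $\mat W$). In this language one has $\vec x^\T \mat \Delta \vec x = \|\nabla_\Gamma u_h\|_{L^2(\Gamma)}^2$, $\vec x^\T \mat W \vec x = (u_h, \op W u_h)_{L^2(\Gamma)}$, while
\begin{equation*}
    \vec x^\T \mat G_{\lambda\lambda}^\T \oL \oL^\T \mat G_{\lambda\lambda} \vec x = \bigl(\textstyle\int_\Gamma u_h \dd S\bigr)^2 \quad \text{and} \quad h^4 \(\oL^\T \vec x\)^2 = h^4 \bigl(\textstyle\sum_m u_h(v_m)\bigr)^2\, .
\end{equation*}
Both equivalences then reduce to the claim that the \emph{continuous mean} of $u_h$ and its suitably scaled \emph{discrete vertex mean} coincide up to a remainder controlled by the respective energy.

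To realise this, I would decompose $u_h = \bar u + u_0$ with $\bar u \coloneqq |\Gamma|^{-1} \int_\Gamma u_h \dd S$ and $u_0$ mean-free, so that $\int_\Gamma u_h \dd S = |\Gamma| \bar u$ and $h^2 \sum_m u_h(v_m) = (h^2 N_\mr{V})\, \bar u + h^2 \sum_m u_0(v_m)$. Quasi-uniformity, already assumed in the Remark above, yields $h^2 N_\mr{V} \asymp |\Gamma|$, so that the two leading terms are comparable with constants independent of $h$. A Cauchy--Schwarz estimate combined with the standard equivalence $h^2 \sum_m u_0(v_m)^2 \asymp \|u_0\|_{L^2(\Gamma)}^2$ for piecewise linears on quasi-uniform meshes yields
\begin{equation*}
    h^4 \bigl(\textstyle\sum_m u_0(v_m)\bigr)^2 \lesssim |\Gamma|\, \|u_0\|_{L^2(\Gamma)}^2\, .
\end{equation*}
Squaring and applying $(a+b)^2 \leq 2 a^2 + 2 b^2$ then shows that the two rank-one perturbations coincide up to a remainder of order $\|u_0\|_{L^2(\Gamma)}^2$, with a constant depending only on $|\Gamma|$.

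The final step is to absorb this remainder into the energy. For \eqref{eq:hDeltaEQcDelta} the standard Poincaré inequality on the closed surface $\Gamma$ delivers $\|u_0\|_{L^2(\Gamma)}^2 \lesssim \|\nabla_\Gamma u_h\|_{L^2(\Gamma)}^2 = \vec x^\T \mat \Delta \vec x$, since $u_0$ is mean-free. For \eqref{eq:hWEQcW} the analogous bound rests on the $H^{1/2}$-ellipticity of $\op W$ modulo constants on a closed, simply connected Lipschitz surface \cite{steinbach_numerical_2010}: combined with the embedding $H^{1/2}(\Gamma) \hookrightarrow L^2(\Gamma)$ and the identity $\op W \cdot 1 = 0$, it gives $\|u_0\|_{L^2(\Gamma)}^2 \lesssim (u_h, \op W u_h)_{L^2(\Gamma)} = \vec x^\T \mat W \vec x$. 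Together with the preceding paragraph this yields both spectral equivalences in both directions.

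The main obstacle is the $H^{1/2}$-Poincaré step: whereas the $H^1$-Poincaré inequality on $\Gamma$ is elementary, $H^{1/2}$-ellipticity of $\op W$ on $H^{1/2}(\Gamma)/\R$ is more delicate and relies on simply-connectedness of $\Gamma$. Making the overall constants truly $h$-independent furthermore hinges on the quasi-uniformity of the mesh invoked in the Remark; without it, the discrete--continuous norm equivalences feeding the remainder estimate lose their uniform control and a separate analysis would be needed.
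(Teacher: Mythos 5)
Your proof is correct, but it takes a genuinely different route from the paper's. The paper argues at the matrix level: it observes that both rank-one perturbations $\mat G_{\lambda\lambda}^\T \oL \oL^\T \mat G_{\lambda\lambda}$ and $\oL\oL^\T h^4$ have spectral norm $\lesssim h^2$ and both evaluate to $\asymp 1$ on $\oL$, splits $\vec x$ orthogonally with respect to $\Span\oL$, and concludes by an eigenvalue-perturbation argument based on the known discrete bounds $\vec x^\T \vec x\, h^2 \lesssim \vec x^\T \mat\Delta \vec x \lesssim \vec x^\T\vec x$ (resp.\ \eqref{eq:WSpectral} for $\mat W$): eigenvalues above the $\mc O(h^2)$ scale are essentially unaffected and those at the $\mc O(h^2)$ scale are shifted by at most a constant factor. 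You instead work at the function level, identify the two perturbations as the squared continuous mean $\bigl(\int_\Gamma u_h \dd S\bigr)^2$ and the squared scaled discrete vertex mean, and absorb their discrepancy---controlled by $\norm{u_0}_{L^2}^2$ via Cauchy--Schwarz and the nodal stability \eqref{eq:Nodall2Stability}---into the energy using the Poincar\'e inequality (resp.\ the $H^{1/2}(\Gamma)/\R$-ellipticity of $\op W$). What your version buys is an explicit, quantitative handling of the cross term, which the paper treats only through the eigenvalue-shift reasoning; what the paper's version buys is that it stays entirely discrete and reuses bounds already established for the loop-star analysis. Both arguments rely on quasi-uniformity through $N_\mr{V}\asymp h^{-2}$. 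One small correction to your closing remark: the $H^{1/2}/\R$-ellipticity of the scalar hypersingular operator on a closed Lipschitz surface does not require $\Gamma$ to be simply connected (its kernel consists of the constants in any case), so the obstacle you flag there is not actually present.
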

\begin{proof}
    Here we prove \eqref{eq:hDeltaEQcDelta}; the proof for \eqref{eq:hWEQcW} follows analogously.
    We note that the null space of $\mat \Delta $ is spanned by $\oL$.
    Furthermore, we note 
    \begin{equation}
        \oL^\T \mat G_{\lambda\lambda}^\T \oL \oL^\T \mat G_{\lambda\lambda}  \oL = A_\Gamma^2 \asymp 1\, , \label{eq:ScalingCorrDefl}
    \end{equation}
    where $A_\Gamma$ is the area of $\Gamma$,
    and $\norm{\oL}_2 = \sqrt{N_\mr{V}} \asymp 1/h$ and thus 
    \begin{equation}
        \oL^\T \oL \oL^\T \oL h^4 \asymp 1\, . \label{eq:ScalingWrongDefl}
    \end{equation}
    
    First, consider that we have
    \begin{equation}
        \vec x^\T \oL \oL^\T h^4  \vec x \lesssim \vec x^\T \vec x h^2 \quad\text{for all~} \vec  x \in \R^{N_\mr{V}} \label{eq:UpperBoundOneDefl}
    \end{equation}
    and
    \begin{equation}
        \vec x^\T \mat G_{\lambda\lambda}^\T \oL \oL^\T \mat G_{\lambda\lambda}  \vec x \lesssim  \vec x^\T \vec x h^2 \quad\text{for all~} \vec  x \in \R^{N_\mr{V}}\, , \label{eq:UpperBoundIntegralDefl}
    \end{equation}
    where the last inequality follows from the well-known equivalence
    \begin{equation}
        \vec x^\T \mat G_{\lambda\lambda} \vec x \asymp \vec x^\T  \vec x h^2 \quad\text{for all~} \vec  x \in \R^{N_\mr{V}}\, , \label{eq:Nodall2Stability}
    \end{equation}
    the submultiplicativity of the matrix norm and \eqref{eq:ScalingCorrDefl}, that is,
    \begin{equation}
        \norm{\mat G_{\lambda\lambda}^\T \oL \oL^\T \mat G_{\lambda\lambda}}_2 \leq \norm{\mat G_{\lambda\lambda}^\T}_2 \norm{\oL \oL^\T}_2 \norm{\mat G_{\lambda\lambda}}_2 \lesssim h^2 \, .
    \end{equation}
    
    Let $\vec x = \vec x_\parallel + \vec x_\perp$ be an orthogonal splitting with $\vec x_\parallel \in \Span \oL $. If $\vec x_\perp = \ooL $, then 
    \begin{equation}
        \vec x^\T \hmat \Delta \vec x = \vec x^\T \mat G_{\lambda\lambda}^\T \oL \oL^\T \mat G_{\lambda\lambda}  \vec x \asymp \vec x^\T \oL \oL^\T h^4 \vec x =  \vec x^\T \cmat \Delta \vec x
    \end{equation}
    due to  \eqref{eq:ScalingCorrDefl}, \eqref{eq:ScalingWrongDefl} (noting that $ \vec x_\parallel$ is a multiple of $\oL$). Since we have \cite{andriulli_loopstar_2012}
    \begin{equation}
        \vec x^\T  \vec x  h^2 \lesssim\vec x^\T \mat \Delta \vec x \lesssim \vec x^\T  \vec x\quad\text{for all~} \vec x \in \(\Span \oL \)^\perp\, ,
    \end{equation}  
    we note that for $\vec x_\perp \neq \ooL$ the leading contribution $\vec x^\T \mat \Delta \vec x$ scales at least $\mc O(h^2)$ and at most $\mc O(1)$.  The contribution from  $\vec x^\T \mat G_{\lambda\lambda}^\T \oL \oL^\T \mat G_{\lambda\lambda}  \vec x$ and $\vec x^\T \oL \oL^\T \vec x$ adds a positive quantity that scales at most $\mc O(h^2)$ due to \eqref{eq:UpperBoundIntegralDefl}, \eqref{eq:Nodall2Stability}, and the fact that $\mat G_{\lambda\lambda}^\T \oL \oL^\T \mat G_{\lambda\lambda} $ and $\oL \oL^\T $ are positive semi-definite rank\hbox{-}1 matrices. Hence for $h\rightarrow 0$, we can conclude that the eigenvalues of $\hmat \Delta$ and $\cmat \Delta$ which scale with $\mc O(h^\alpha)$, $0 \leq \alpha < 2$ (and their associated eigenvectors) are spectrally identical and the eigenvalues scaling by $\mc O(h^2)$ are shifted at most by a constant factor. Thus \eqref{eq:hDeltaEQcDelta} follows.
    
    For \eqref{eq:hWEQcW}, the same argumentation can be used noting that
    \begin{equation}
        \vec x^\T  \vec x  h^2 \lesssim\vec x^\T \mat W \vec x \lesssim \vec x^\T  \vec x h \quad\text{for all~} \vec x \in \(\Span \oL\)^\perp \label{eq:WSpectral}
    \end{equation}  
    holds.
\end{proof}

\begin{remark}
    This lemma will be frequently used in order to replace $\mat G_{\lambda\lambda}^\T \oL \oL^\T \mat G_{\lambda\lambda} $ by $\oL \oL^\T $. In essence, we are allowed to do so if the accompanying matrix has a null space spanned by $\oL$ and where the smallest non-zero singular value scales at most quadratically in $h$.
\end{remark}

\begin{lem}\label{lem:ModifiedWGWEquiv}
    We have the spectral equivalences
    \begin{equation}
        \vec x^\T \hmat W  \mat  G_{\lambda \lambda}^{-1} \hmat W \vec x \asymp \vec x^\T \(\mat W\mat  G_{\lambda \lambda}^{-1} \mat W  +  \mat G_{\lambda\lambda}^\T \oL \oL^\T \mat G_{\lambda\lambda} \) \vec x 
        \asymp \vec x^\T \( \mat W\mat  G_{\lambda \lambda}^{-1} \mat W  +  \oL \oL^\T h^4  \) \vec x\quad\text{for all~} \vec x \in \R^{N_\mr{V}}  \, . \label{eq:hWhWWW}
    \end{equation}   
\end{lem}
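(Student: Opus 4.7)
The plan is to prove the two $\asymp$ in \eqref{eq:hWhWWW} separately, with the first being an exact algebraic identity and the second an application of the Remark following \Cref{lem:HatandCheckEquivs}.

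For the first $\asymp$, I would expand $\hmat W \mat G_{\lambda\lambda}^{-1}\hmat W$ by inserting the definition $\hmat W = \mat W + \mat G_{\lambda\lambda}\oL\oL^\T \mat G_{\lambda\lambda}$ on both sides, using that $\mat G_{\lambda\lambda}$ is symmetric. The two cross terms each contain a factor of $\mat W\oL$ (respectively $\oL^\T \mat W$), which vanishes because $\oL$ spans the null space of the discrete hypersingular matrix—the discrete counterpart of $\op W\, 1 = 0$ combined with the partition of unity $\sum_n \lambda_n = 1$. The remaining rank-one contribution collapses through $\oL^\T\mat G_{\lambda\lambda}\oL = A_\Gamma \asymp 1$ to the exact identity
\[
    \hmat W \mat G_{\lambda\lambda}^{-1}\hmat W = \mat W\mat G_{\lambda\lambda}^{-1}\mat W + A_\Gamma\, \mat G_{\lambda\lambda}\oL\oL^\T \mat G_{\lambda\lambda}\, ,
\]
from which the first equivalence in \eqref{eq:hWhWWW} follows at once.

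For the second $\asymp$, I would then invoke the Remark immediately after \Cref{lem:HatandCheckEquivs}, which permits the substitution of $\mat G_{\lambda\lambda}\oL\oL^\T\mat G_{\lambda\lambda}$ by $\oL\oL^\T h^4$ provided the accompanying matrix, here $\mat W\mat G_{\lambda\lambda}^{-1}\mat W$, has null space $\Span\oL$ and smallest non-zero eigenvalue bounded below by a constant multiple of $h^2$. The null space property is $\mat W\oL = \vecO$ together with the converse inclusion: $\mat W\mat G_{\lambda\lambda}^{-1}\mat W\vec x = \vecO$ forces $\mat G_{\lambda\lambda}^{-1}\mat W\vec x \in \Span\oL$, and projecting by $\oL^\T$ gives $\oL^\T \mat W\vec x = 0$, so $\mat W\vec x \in \Span\oL \cap (\Span\oL)^\perp = \{\vecO\}$, whence $\vec x \in \Span\oL$. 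The $h^2$ lower bound follows from combining the first identity already derived with the assumed \eqref{eq:WWDelta}: on $(\Span\oL)^\perp$ the Rayleigh quotient of $\hmat W \mat G_{\lambda\lambda}^{-1}\hmat W$ is $\asymp$ that of $\hmat \Delta$, which in turn is $\asymp$ that of $\mat \Delta$ modulo a perturbation of size $\mc O(h^2)$, and the smallest non-zero eigenvalue of $\mat \Delta$ is known to scale as $h^2$.

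The main obstacle is precisely the transfer of the $h^2$ lower bound in the previous step: the perturbation $\mat G_{\lambda\lambda}\oL\oL^\T \mat G_{\lambda\lambda}$ sitting inside $\hmat W \mat G_{\lambda\lambda}^{-1}\hmat W$ does not vanish on $(\Span\oL)^\perp$ in the Euclidean sense, so one cannot deduce the bound on $\mat W\mat G_{\lambda\lambda}^{-1}\mat W$ directly from \eqref{eq:WWDelta}. However, the same orthogonal-splitting device $\vec x = \vec x_\parallel + \vec x_\perp$ used in \Cref{lem:HatandCheckEquivs}, together with \eqref{eq:ScalingCorrDefl}, \eqref{eq:ScalingWrongDefl}, \eqref{eq:UpperBoundOneDefl}, and \eqref{eq:UpperBoundIntegralDefl}, lets one argue case-by-case: on $\Span\oL$ the $\mat W\mat G_{\lambda\lambda}^{-1}\mat W$ term is zero while both rank-one perturbations contribute $\asymp 1$; on its complement the accompanying matrix dominates at scale $h^2$ and absorbs both perturbations. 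This is the same mechanism as in \Cref{lem:HatandCheckEquivs}, which is why the Remark applies here as well.
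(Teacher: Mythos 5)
Your proposal is correct and follows essentially the same route as the paper: the first equivalence via the exact expansion using $\mat W \oL = \vecO$ and $\oL^\T \mat G_{\lambda\lambda}\oL = A_\Gamma$, and the second via the case-splitting mechanism of \Cref{lem:HatandCheckEquivs}. The only difference is that the paper obtains the required two-sided bound $h^2\,\vec x^\T\vec x \lesssim \vec x^\T \mat W \mat G_{\lambda\lambda}^{-1}\mat W\vec x \lesssim \vec x^\T\vec x$ on $\(\Span\oL\)^\perp$ directly from \eqref{eq:Nodall2Stability} and \eqref{eq:WSpectral}, which is cleaner than your detour through \eqref{eq:WWDelta}, whose transfer difficulty you correctly identify and then circumvent by the same splitting argument anyway.
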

\begin{proof}
    We have 
    \begin{equation}
        \hmat W  \mat  G_{\lambda \lambda}^{-1} \hmat W  = \( \mat W \mat G_{\lambda \lambda}^{-1} + \mat G_{\lambda \lambda} \oL \oL^\T \)\( \mat W +  \mat G_{\lambda \lambda} \oL \oL^\T \mat G_{\lambda \lambda}   \) 
        = \mat W \mat G_{\lambda \lambda}^{-1} \mat W +  \mat G_{\lambda \lambda} \oL \oL^\T  \mat G_{\lambda \lambda} \oL \oL^\T \mat G_{\lambda \lambda} 
    \end{equation}
    using $\mat W \oL = \vecO$ and $ \oL^\T \mat W= \vecO^\T$.
    Since $\oL^\T  \mat G_{\lambda \lambda} \oL = \int_{\Gamma} \dd S(\veg r) = A_\Gamma$ is a constant,  we yield
    \begin{equation}
        \vec x^\T \hmat W  \mat  G_{\lambda \lambda}^{-1} \hmat W  \vec x 
        \asymp  \vec x^\T \(\mat W \mat G_{\lambda \lambda}^{-1} \mat W +  \mat G_{\lambda \lambda} \oL  \oL^\T \mat G_{\lambda \lambda} \)\vec x \quad\text{for all~} \vec x \in \R^{N_\mr{V}}\, ,
    \end{equation}
    which proves the first equivalence in \eqref{eq:hWhWWW}.
    For the second equivalence, we have due to \eqref{eq:Nodall2Stability}
    \begin{equation}
        \vec x^\T \mat W \mat G_{\lambda \lambda}^{-1} \mat  W \vec x \asymp \vec x^\T \mat W \mat  W \vec x  / h^2\quad\text{for all~} \vec x \in \R^{N_\mr{V}}
    \end{equation}
    and using \eqref{eq:WSpectral}, we have
    \begin{equation}
        \vec x^\T  \vec x  h^2 \lesssim\vec x^\T \mat W \mat W \vec x /  h^2 \lesssim \vec x^\T  \vec x  \quad\text{for all~} \vec x \in \(\Span \oL \)^\perp \, .
    \end{equation}  
    Then using the argumentation of \Cref{lem:HatandCheckEquivs}, we obtain
    \begin{equation}
        \vec x^\T \(\mat W\mat  G_{\lambda \lambda}^{-1} \mat W  +  \mat G_{\lambda\lambda}^\T \oL \oL^\T \mat G_{\lambda\lambda} \) \vec x 
        \asymp \vec x^\T \( \mat W\mat  G_{\lambda \lambda}^{-1} \mat W  +  \oL \oL^\T h^4  \) \vec x\quad\text{for all~} \vec x \in \R^{N_\mr{V}} \, .
    \end{equation}
\end{proof}
\begin{lem}\label{lem:CLaplacianGraph}
    We have the spectral equivalence
    \begin{equation}
        \vec x^\T \cmat \Delta  \vec x \asymp \vec x^\T \( \matL^\T \matL + \oL \oL^\T h^4 \) \vec x\quad\text{for all~}\vec x \in \R^{N_\mr{V}}\, .
    \end{equation}
\end{lem}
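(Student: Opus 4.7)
The strategy is to exploit the factorization \eqref{eq:PrimalLaplace}, $\mat \Delta = \matL^\T \mat G_{\veg f \veg f} \matL$, together with the $L^2$-stability of the rescaled \ac{RWG} basis, and then to transfer the rank-one deflation from $\cmat \Delta$ onto $\matL^\T \matL$ by invoking the same argumentation already used in \Cref{lem:HatandCheckEquivs}.

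Concretely, I would first establish the spectral equivalence $\vec y^\T \mat G_{\veg f \veg f} \vec y \asymp \vec y^\T \vec y$ for all $\vec y \in \R^N$. This is a standard consequence of reference-element scaling for the $l_n$-normalized \ac{RWG} basis $\veg f_n = \veg f_n^{\mr{Original}}/l_n$ used in this paper: the diagonal entries of $\mat G_{\veg f \veg f}$ are of order $\mc O(1)$, the matrix is uniformly sparse under the quasi-uniformity assumption, and the lower bound follows from standard inverse-inequality arguments on the reference triangle. Substituting $\vec y = \matL \vec x$ and combining with \eqref{eq:PrimalLaplace} then gives
\begin{equation}
    \vec x^\T \mat \Delta \vec x = (\matL \vec x)^\T \mat G_{\veg f \veg f} (\matL \vec x) \asymp (\matL \vec x)^\T (\matL \vec x) = \vec x^\T \matL^\T \matL \vec x \quad\text{for all~}\vec x \in \R^{N_\mr V}\, .
\end{equation}

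Since $\matL \oL = \vecO$, both $\mat \Delta$ and $\matL^\T \matL$ have the same null space, spanned by $\oL$. The rank-one perturbation $\oL \oL^\T h^4$ is added identically to both matrices in the claimed equivalence, so it preserves the Rayleigh-quotient inequality and yields
\begin{equation}
    \vec x^\T \cmat \Delta \vec x = \vec x^\T \mat \Delta \vec x + h^4 \vec x^\T \oL \oL^\T \vec x \asymp \vec x^\T \(\matL^\T \matL + \oL \oL^\T h^4 \) \vec x \quad\text{for all~}\vec x \in \R^{N_\mr V}\, ,
\end{equation}
which is the desired conclusion.

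The main obstacle is the careful justification of the $L^2$-stability estimate $\mat G_{\veg f \veg f} \asymp \mat I$ with constants independent of $h$, as this relies on the specific normalization adopted in this paper: the unrescaled \ac{RWG} basis would instead satisfy $\mat G_{\veg f \veg f} \asymp h^2 \mat I$ and shift the whole equivalence by a uniform factor of $h^2$. Apart from this scaling bookkeeping, the result is an essentially algebraic consequence of \eqref{eq:PrimalLaplace} combined with the deflation argument from \Cref{lem:HatandCheckEquivs}.
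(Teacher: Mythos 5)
Your proposal is correct and follows essentially the same route as the paper: it combines the factorization \eqref{eq:PrimalLaplace} with the $L^2$-stability $\vec x^\T \mat G_{\veg f \veg f}\vec x \asymp \vec x^\T \vec x$ of the (edge-length-normalized) \ac{RWG} basis to get $\vec x^\T \mat\Delta\vec x \asymp \vec x^\T\matL^\T\matL\vec x$, and then observes that adding the identical positive semi-definite rank-one term $\oL\oL^\T h^4$ to both sides preserves the equivalence. The paper simply cites \cite{andriulli_loopstar_2012} for the Gram-matrix stability where you sketch the reference-element argument, but the substance is the same.
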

\begin{proof}
    From \eqref{eq:PrimalLaplace} and  \cite{andriulli_loopstar_2012}
    \begin{equation}
        \vec x^\T \mat G_{\veg f \veg f} \vec x  \asymp  \vec x^\T \mat G_{\wveg f \wveg f} \vec x  \asymp \vec x^\T \vec x \quad\text{for all~} \vec x \in \R^{N}\, , \label{eq:WellGff}
    \end{equation}
    we obtain
    \begin{equation}
        x^\T \mat \Delta \vec x \asymp \vec x^\T  \matL^\T \matL \vec x\quad\text{for all~} \vec x \in \R^{N_\mr{V}}\, . \label{eq:Beltrami_Graph_Equiv}
    \end{equation}
    Equation \eqref{eq:Beltrami_Graph_Equiv} remains true when we add $\oL \oL^\T h^4$ to the matrices since all matrices appearing are positive, semi-definite.
\end{proof}

\begin{prop}\label{prop:WellconditionedVectorPot}
    We have the spectral equivalence
    \begin{equation}
        \vec x^\T \Pl \(\matTA^k\)^\dagger \matL \mat G_{\lambda \lambda}^{-1} \matL^\T \matTA^k \Pl \vec x \asymp \vec x^\T \Pl \vec x\quad\text{for all~} \vec x \in \R^{N}\, , \label{eq:WellConditionedVectorPot}
    \end{equation}
    with $\Pl \coloneqq  \matL \( \matL^\T \matL\)^{+}\matL^\T$.
\end{prop}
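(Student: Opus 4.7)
The plan is to reduce \eqref{eq:WellConditionedVectorPot} to the scalar spectral equivalences \eqref{eq:WWDelta} and those of Lemmas~\ref{lem:HatandCheckEquivs}, \ref{lem:ModifiedWGWEquiv}, and \ref{lem:CLaplacianGraph}, after (i)~parametrising the range of $\Pl$ by loop coefficients and (ii)~replacing the dynamic vector potential by its static counterpart.

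First I would use that $\Pl$ is the orthogonal projector onto the column space of $\matL$ and that $\matL$ has null space $\Span \oL$. For any $\vec x \in \R^N$ set $\vec y \coloneqq (\matL^\T \matL)^{+} \matL^\T \vec x$; then $\Pl \vec x = \matL \vec y$ with $\vec y \in (\Span \oL)^\perp$, and
\begin{equation*}
    \vec x^\T \Pl \vec x = \vec y^\T \matL^\T \matL \vec y, \qquad \matL^\T \matTA^k \Pl \vec x = \matL^\T \matTA^k \matL \vec y .
\end{equation*}
It therefore suffices to establish $\vec y^\T \matL^\T (\matTA^k)^\dagger \matL \mat G_{\lambda\lambda}^{-1} \matL^\T \matTA^k \matL \vec y \asymp \vec y^\T \matL^\T \matL \vec y$ for every $\vec y \in (\Span \oL)^\perp$.

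Second, I would replace $\matTA^k$ by $\matTA^0$. The kernel difference $(\e^{\ii k |\veg r - \veg r'|}-1)/(4\uppi|\veg r - \veg r'|)$ is smooth and bounded, so $\matTA^k - \matTA^0$ is a smoothing perturbation that is relatively bounded with respect to $\matTA^0$ with constants independent of $h$. Combined with the static identity $\matL^\T \matTA^0 \matL = \mat W$ read off from \eqref{eq:StaticLS}, this yields
\begin{equation*}
    \vec y^\T \matL^\T (\matTA^k)^\dagger \matL \mat G_{\lambda\lambda}^{-1} \matL^\T \matTA^k \matL \vec y \asymp \vec y^\T \mat W \mat G_{\lambda\lambda}^{-1} \mat W \vec y ,
\end{equation*}
with equivalence constants that may depend on the fixed wavenumber $k$ but not on $h$. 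Third, since $\oL^\T \vec y = 0$, adjoining $\oL \oL^\T h^4$ or $\mat G_{\lambda\lambda}^\T \oL \oL^\T \mat G_{\lambda\lambda}$ changes nothing on either side, and successive application of Lemma~\ref{lem:ModifiedWGWEquiv}, the assumption \eqref{eq:WWDelta}, Lemma~\ref{lem:HatandCheckEquivs} and Lemma~\ref{lem:CLaplacianGraph} gives
\begin{equation*}
    \vec y^\T \mat W \mat G_{\lambda\lambda}^{-1} \mat W \vec y \asymp \vec y^\T \hmat W \mat G_{\lambda\lambda}^{-1} \hmat W \vec y \asymp \vec y^\T \hmat \Delta \vec y \asymp \vec y^\T \cmat \Delta \vec y \asymp \vec y^\T \matL^\T \matL \vec y ,
\end{equation*}
which closes the argument after retracing Steps one and two.

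The main obstacle will be the frequency-reduction step: while $\matTA^k - \matTA^0$ is manifestly compact and one order smoother than $\matTA^0$, one must verify that the sandwich $(\cdot)^\dagger \matL \mat G_{\lambda\lambda}^{-1} \matL^\T(\cdot)$ preserves the \emph{lower} bound uniformly over loop coefficients and all mesh scales~$h$, not merely the upper bound; everything else is a mechanical concatenation of the preceding lemmas once the reduction to the static $\mat W$ has been justified.
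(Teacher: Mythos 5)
Your proposal follows essentially the same route as the paper's proof: the same chain Lemma~\ref{lem:ModifiedWGWEquiv} $\to$ \eqref{eq:WWDelta} $\to$ Lemma~\ref{lem:HatandCheckEquivs} $\to$ Lemma~\ref{lem:CLaplacianGraph} on loop coefficients, the same substitution $\vec x = \(\matL^\T\matL\)^+\matL^\T\vec y$ linking that equivalence to $\Pl$ via $\matL^\T\matTA^0\matL = \mat W$, and the same compact-perturbation argument to pass from $\matTA^0$ to $\matTA^k$. The caveat you raise about the dynamic perturbation possibly degrading the lower bound is the same point the paper itself only addresses heuristically (eigenvalues of the compact part accumulate at zero as $h\to 0$), so your treatment is on par with the published one.
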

\begin{proof}
    By combining the previous lemmas, we can establish
    \begin{equation}
        \vec x^\T \( \mat W\mat  G_{\lambda \lambda}^{-1} \mat W  +  \oL \oL^\T h^4 \) \vec x 
        \asymp  \vec x^\T \(\matL^\T \matL + \oL \oL^\T h^4 \) \vec x\quad\text{for all~} \vec x \in \R^{N_\mr{V}}\, ,
    \end{equation}
    that is in more detail, we have
    \begin{multline}
        \vec x^\T \( \mat W\mat  G_{\lambda \lambda}^{-1} \mat W  +  \oL \oL^\T h^4 \) \vec x  \stackrel{\text{\Cref{lem:ModifiedWGWEquiv}}}{\asymp}     \vec x^\T \hmat W  \mat  G_{\lambda \lambda}^{-1} \hmat W \vec x  \\ \stackrel{\text{\Cref{lem:SpectralEquivance}}}{\asymp}       \vec x^\T \hmat \Delta  \vec x 
        \stackrel{\text{\Cref{lem:HatandCheckEquivs}}}{\asymp}   \vec x^\T \cmat \Delta  \vec x 
        \\  \stackrel{\text{\Cref{lem:CLaplacianGraph}}}{\asymp}     \vec x^\T \(\matL^\T \matL + \oL \oL^\T h^4 \) \vec x\quad\text{for all~} \vec x \in \R^{N_\mr{V}}\, .
    \end{multline}
    We apply the substitution $\vec x = \(\matL^\T \matL \)^+ \matL^ \T \vec y$ and obtain
    \begin{equation}
        \vec y^\T   \matL \(\matL^\T \matL \)^+ \mat W\mat  G_{\lambda \lambda}^{-1} \mat W  \(\matL^\T \matL \)^+ \matL^\T \vec y \asymp  \vec y^\T \Pl \vec y\quad\text{for all~} \vec y \in \R^{N}\, , \label{eq:TAComparison}
    \end{equation} 
    where we used $\Pl \equiv   \matL \( \matL^\T \matL\)^+  \matL^\T$.
    We note that $ \matL^\T \matTA^0 \matL = \mat W = \mat W^\dagger$;
    for $k \neq 0$, the dynamic kernel introduces a compact perturbation \cite{hiptmair_natural_2002}: the operator $\TA^k - \TA^0$. Such a perturbation may deteriorate the condition number for a given frequency $k$, but since it is compact, the eigenvalues of the discrete counterpart accumulate at  zero for  $h\rightarrow 0$. Hence we still have an $h$-independent upper bound for the condition number. This allows us  to choose $\matTA^\dagger$ instead of  $\matTA$ resulting in \eqref{eq:WellConditionedVectorPot}. 
\end{proof}
\begin{remark}
    The matrix $\Pl \(\matTA^k\)^\dagger \matL \mat G_{\lambda \lambda}^{-1} \matL^\T \matTA^k \Pl  $ is Hermitian and positive semi-definite, which can be seen by considering
    \begin{equation}
        \Pl \(\matTA^k\)^\dagger \matL \mat G_{\lambda \lambda}^{-1} \matL^\T \matTA^k \Pl  = \( \mat G_{\lambda \lambda}^{-1/2} \matL^\T \matTA^k \Pl  \)^\dagger \mat G_{\lambda \lambda}^{-1/2} \matL^\T \matTA^k \Pl\, .
    \end{equation}
\end{remark}

\subsubsection{Multiply Connected Case}
Now, we are ready to consider the case that $\Gamma$ is multiply connected:
we have to establish that 
\begin{equation}
    \Plh  \(\matTA^k\)^\dagger \( \matL \mat G_{\lambda \lambda}^{-1} \matL^\T + \Plh \) \matTA^k \Plh \label{eq:TAonMP}
\end{equation}
is well-conditioned up to its null space. For deriving this result, some preliminary considerations are necessary, and again, we start with considering the static limit $k\rightarrow 0$.

\begin{prop}\label{prop:WellconditionedExplicitTA}
    We have the spectral equivalence
    \begin{equation}
        \vec x^\T     \(\bmatL^\T \bmatL \)^{-1/4} \bmatL^\T \matTA^0 \bmatL \(\bmatL^\T \bmatL \)^{-1/4} \vec x 
        \asymp \vec x^\T  \vec x\quad\text{for all~} \vec x \in \(\Span \oL \)^\perp \, ,
    \end{equation}
    where we used the substitution $\bmatL \coloneqq \matL/h$.
\end{prop}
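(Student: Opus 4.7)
The plan is to reduce the statement to a spectral equivalence between $\mat W$ and a fractional power of the graph Laplacian $\matL^\T \matL$, and then to invoke operator monotonicity.

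First, using the identity $\matL^\T \matTA^0 \matL = \mat W$ from \eqref{eq:StaticLS} together with the scaling $\bmatL = \matL/h$, one immediately obtains $\bmatL^\T \matTA^0 \bmatL = \mat W/h^2$ and $(\bmatL^\T \bmatL)^{-1/4} = h^{1/2} (\matL^\T \matL)^{-1/4}$, where the fourth root of the pseudoinverse is understood on $(\Span \oL)^\perp$ (noting $\matL \oL = \vecO$). Substituting yields
\begin{equation*}
    (\bmatL^\T \bmatL)^{-1/4} \bmatL^\T \matTA^0 \bmatL (\bmatL^\T \bmatL)^{-1/4} = h^{-1} (\matL^\T \matL)^{-1/4} \mat W (\matL^\T \matL)^{-1/4}\, ,
\end{equation*}
so the claim reduces to showing $\mat W \asymp h (\matL^\T \matL)^{1/2}$ on $(\Span \oL)^\perp$.

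To control $\mat W^2$ in this way, I would chain the equivalences already established in this section. On $(\Span \oL)^\perp$ the rank-one term $\oL \oL^\T h^4$ appearing in \Cref{lem:ModifiedWGWEquiv} vanishes, hence $\vec x^\T \mat W \mat G_{\lambda\lambda}^{-1} \mat W \vec x \asymp \vec x^\T \hmat W \mat G_{\lambda\lambda}^{-1} \hmat W \vec x$ for $\vec x \perp \oL$. The assumption \eqref{eq:WWDelta}, together with \Cref{lem:HatandCheckEquivs} and \Cref{lem:CLaplacianGraph}, then gives $\hmat W \mat G_{\lambda\lambda}^{-1} \hmat W \asymp \hmat \Delta \asymp \cmat \Delta \asymp \matL^\T \matL + \oL \oL^\T h^4$, which collapses to $\matL^\T \matL$ on $(\Span \oL)^\perp$. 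Combining this with $\mat G_{\lambda\lambda}^{-1} \asymp h^{-2} \mat I$ from \eqref{eq:Nodall2Stability} and the identity $\vec x^\T \mat W \mat G_{\lambda\lambda}^{-1} \mat W \vec x = (\mat W \vec x)^\T \mat G_{\lambda\lambda}^{-1} (\mat W \vec x)$ produces $\vec x^\T \mat W^2 \vec x \asymp h^2 \vec x^\T \matL^\T \matL \vec x$ on $(\Span \oL)^\perp$.

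The concluding step is to take a square root. Since $\mat W^2$ and $h^2 \matL^\T \matL$ are positive definite on $(\Span \oL)^\perp$, the Löwner--Heinz inequality (operator monotonicity of $t \mapsto t^{1/2}$ on the positive semi-definite cone) lifts $\mat W^2 \asymp h^2 \matL^\T \matL$ to $\mat W \asymp h (\matL^\T \matL)^{1/2}$. Sandwiching by $(\matL^\T \matL)^{-1/4}$ on both sides and recalling the prefactor $h^{-1}$ then yields the claimed $\asymp \vec x^\T \vec x$. I expect this last step to be the subtle one: the passage from $\mat W^2$ to $\mat W$ is not elementary and requires care to apply Löwner--Heinz only on the subspace where both matrices are strictly positive, and to keep track of the scalar $h^2$ under the root. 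Everything else is a bookkeeping exercise consolidating equivalences already in place.
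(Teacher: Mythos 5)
Your proposal is correct, and it reaches the conclusion by a genuinely different route at the one non-trivial step. Both arguments share the same skeleton: first establish that the ``squared'' object is spectrally equivalent to the graph Laplacian (your chain $\mat W\mat G_{\lambda\lambda}^{-1}\mat W \asymp \hmat W\mat G_{\lambda\lambda}^{-1}\hmat W \asymp \hmat\Delta \asymp \cmat\Delta \asymp \matL^\T\matL$ on $\(\Span\oL\)^\perp$ is exactly the content of \Cref{prop:WellconditionedVectorPot}, which the paper takes as its starting point), and then extract a ``square root.'' Where you differ is in how the root is taken. The paper works with the non-symmetric matrix $\mat A = \bmatL^\T\matTA^0\bmatL\(\bmatL^\T\bmatL\)^{-1/2}$, observes that its singular values are the square roots of those of $\mat A^\T\mat A$, and then needs a somewhat delicate detour---bounding the smallest non-zero eigenvalue of $\mat A$ by its smallest non-zero singular value (using that the left and right null spaces coincide) and passing to the similar, symmetric matrix $\(\bmatL^\T\bmatL\)^{-1/4}\mat A\(\bmatL^\T\bmatL\)^{1/4}$ whose eigenvalues and singular values agree. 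You instead stay entirely with quadratic forms and invoke the L\"owner--Heinz theorem to lift $\mat W^2 \asymp h^2\matL^\T\matL$ to $\mat W \asymp h\(\matL^\T\matL\)^{1/2}$, after which the sandwich by $\(\matL^\T\matL\)^{-1/4}$ is immediate. Your route is cleaner and avoids the eigenvalue-versus-singular-value bookkeeping, at the price of importing operator monotonicity as a black box; the paper's route is more elementary in its ingredients. The only point you should make explicit is that the step $\(\mat W^2\)^{1/2}=\mat W$ requires $\mat W\succeq 0$ (which holds: $\mat W$ is the Galerkin matrix of the coercive hypersingular operator, cf.~\eqref{eq:WSpectral}, with null space exactly $\Span\oL$); the paper relies on the same fact implicitly when it asserts that its final symmetric matrix is positive semidefinite. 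With that noted, your argument is complete and the constants survive the square root unchanged up to $\sqrt{c}$, so the equivalence remains uniform in $h$.
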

\begin{proof}
    \Cref{prop:WellconditionedVectorPot} implies that
    \begin{equation}
        \vec x^\T \Pl \matTA^0 \matL \mat G_{\lambda \lambda}^{-1} \matL^\T \matTA^0 \Pl \vec x \asymp \vec x^\T  \Pl \vec x\quad\text{for all~} \vec x \in \mel X_\upLambda
    \end{equation}
    with
    \begin{equation}
        \mel X_\upLambda \coloneqq \{\vec x \in \R^n  \mid  \vec x = \Pl \vec x  \}
    \end{equation}
    holds. In what follows, the expression $\(\matL^\T \matL\)^{-1/2}$ has to be read as $\(\(\matL^\T \matL\)^{+}\)^{+1/2}$. We apply the substitution $\vec y =  (\matL^\T \matL )^{-1/2}  \matL^\T \vec x$ noting that $(\matL^\T \matL )^{-1/2}  \matL^\T  : \mel X_\upLambda \rightarrow  \(\Span \oL \)^\perp $ is one-to-one and onto and that 
    \begin{equation}
        \Pl = \matL (\matL^\T \matL )^{-1/2}  (\matL^\T \matL )^{-1/2}  \matL^\T
    \end{equation}
    so we obtain
    \begin{equation}
        \vec y^\T  \(\matL^\T \matL \)^{-1/2} \matL^\T \matTA^0 \matL \mat G_{\lambda \lambda}^{-1} \matL^\T \matTA^0  \matL \(\matL^\T \matL \)^{-1/2}  \vec y  \asymp \vec y^\T  \vec y\quad\text{for all~} \vec y \in \(\Span \oL \)^\perp\, .
    \end{equation}
    By using \eqref{eq:Nodall2Stability}, we obtain
    \begin{equation}
        \vec x^\T \(\matL^\T \matL \)^{-1/2} \matL^\T \matTA^0 \matL \(h^{-2} \) \matL^\T \matTA^0 \matL \(\matL^\T \matL \)^{-1/2} \vec x \asymp \vec x^\T \vec x  \quad\text{for all~} \vec x \in  \(\Span \oL \)^\perp
    \end{equation}
    We define $\bmatL = \matL /h$ noting that
    \begin{equation}
        \(\matL^\T \matL \)^{-1/2} \matL^\T \matTA^0\ \matL \(h^{-2} \) \matL^\T \matTA^0 \matL \(\matL^\T \matL \)^{-1/2} 
        =   \(\bmatL^\T \bmatL \)^{-1/2} \bmatL^\T \(\matTA^0\)^\dagger \bmatL \bmatL^\T \matTA^0 \bmatL \(\bmatL^\T \bmatL \)^{-1/2} \, .
    \end{equation}
    Furthermore, we notice that the singular values of  
    \begin{equation}
        \bmatL^\T \matTA^0 \bmatL \(\bmatL^\T \bmatL \)^{-1/2} \label{eq:SqrtMatrixOfWellTA}
    \end{equation}
    are by definition of the \ac{SVD} the square roots of the singular values of 
    \begin{equation}
        \(\bmatL^\T \bmatL \)^{-1/2} \bmatL^\T \matTA^0 \bmatL  \bmatL^\T \matTA^0 \bmatL \(\bmatL^\T \bmatL \)^{-1/2}\, ,
    \end{equation}
    which implies that the matrix in \eqref{eq:SqrtMatrixOfWellTA} is well-conditioned. 
    
    The absolute value of the largest eigenvalue can always be bounded from above by the largest singular value and the smallest eigenvalue can always be bounded from below by the smallest singular value. The second half of this statement is not entirely helpful since the smallest eigenvalue and singular value are both zero. However, since the left null space and the right null space of \eqref{eq:SqrtMatrixOfWellTA}  are identical, we can show that the smallest non-zero absolute eigenvalue $\nu_\mr{min}$ can be bounded from below by the smallest non-zero singular value $s_\mr{min}$ .
    
    To see this, let $\vec v$ be the unit eigenvector associated with $\nu_\mr{min}$ and use the abbreviation $\mat A = \bmatL^\T \matTA^0 \bmatL (\bmatL^\T \bmatL )^{-1/2}$. We have
    \begin{equation}
        \abs{\nu_\mr{min}}^2  = \vec v^\T \mat A^\T \mat A \vec v  \geq \min_{\norm{\vec x}_2 =1 \wedge  \vec x \perp \oL} \vec x^\T \mat A^\T \mat A \vec x = s_\mr{min}^2
    \end{equation}
    following the properties of the \ac{SVD} and noting that $\norm{\vec v}_2 =1 $ and $\vec v \perp \oL$.
    
    Similar matrices have the same eigenvalues and thus $\mat A$ and $(\matL^\T \matL )^{-1/4} \mat A (\matL^\T \matL )^{1/4}$ have the same eigenvalues. Since
    \begin{equation}
        \(\bmatL^\T \bmatL \)^{-1/4} \mat A \(\bmatL^\T \bmatL \)^{1/4} = \(\bmatL^\T \bmatL \)^{-1/4} \bmatL^\T \matTA^0 \bmatL \(\bmatL^\T \bmatL \)^{-1/4}
    \end{equation}
    is a symmetric, positive semidefinite matrix, the eigenvalues and singular values coincide and thus
    \begin{equation}
        \(\bmatL^\T \bmatL \)^{-1/4} \bmatL^\T \matTA^0 \bmatL \(\bmatL^\T \bmatL \)^{-1/4}
    \end{equation}
    is well-conditioned up to its null space.
\end{proof}
Given \Cref{prop:WellconditionedExplicitTA}, we can conclude that if the matrix $\bmatL (\bmatL^\T \bmatL )^{-1/4}$ is used as solenoidal basis, we obtain a well-conditioned matrix with bounded norm. Thus if we were to pursue a classical \emph{explicit} quasi-Helmholtz decomposition scheme, we could use the basis $\begin{bmatrix} \bmatL (\bmatL^\T \bmatL )^{-1/4} & \mat H \end{bmatrix}$ as preconditioner for $\matTA$ on multiply connected geometries. 

We note that $\mat H^\T \matTA^0 \mat H$ is well-conditioned since the global loops are associated with the geometry of $\Gamma$; therefore, $\veg H_n$ and subsequently $\(\veg H_m, \TA^\kappa \veg H_n \)_{L^2}$ remain the same when $h \rightarrow 0$, since we assumed a structured mesh refinement, so that we have nested sequences of function spaces.
Thus we have
\begin{equation}
    \vec x^\T \mat H^\T \matTA^0 \mat H \vec x \asymp \vec x^\T  \mat H^\T \mat H \vec x \asymp \vec x^\T \vec x\quad\text{for all~} \vec x \in \R^{N_\mr{H}} \, . \label{eq:MultiplyConnectedHTAH}
\end{equation}
Using \eqref{eq:MultiplyConnectedHTAH}, it follows that the matrix
\begin{equation}
    \begin{bmatrix} \(\bmatL^\T \bmatL \)^{-1/4} \bmatL^\T \\ \mat H^\T \end{bmatrix}  \matTA^0  \begin{bmatrix} \bmatL \(\bmatL^\T \bmatL \)^{-1/4} & \mat H \end{bmatrix} \label{eq:TAExplicitLandH}
\end{equation}
is well-conditioned in $h$ up to the null space of the loop functions since the basis transformation matrix has full rank (up to the null space of the loop functions) and since the blocks on the main diagonal are well-conditioned and all blocks are bounded: the boundedness of $\mat H^\T \matTA^0 \bmatL (\bmatL^\T \bmatL )^{-1/4}$ and of $(\bmatL^\T \bmatL )^{-1/4} \bmatL^\T \matTA^0 \mat H$ follows from the boundedness of the blocks on the main diagonal.
We note that we cannot exclude that some singular values are shifted close to zero by the off-diagonal block matrices (though an $h$-refinement would not further alter these singular values). In practice, however, we did not observe such behavior.

Now, it remains to return from the explicit quasi-Helmholtz decomposition of \eqref{eq:TAExplicitLandH} to the new formulation in \eqref{eq:TAonMP}.
\begin{prop}\label{prop:WellconditionedVectorPotwithH}
    We have the spectral equivalence
    \begin{equation}
        \vec x^\T \Plh \(\matTA^k\)^\dagger \( \matL \mat G_{\lambda \lambda}^{-1} \matL^\T +  \Plh \) \matTA^k \Plh \vec x \asymp \vec x^\T \Plh \vec x\quad\text{for all~} \vec x \in \R^{N}\, . \label{eq:TAP2017:WellConditionedVectorPotwithH}
    \end{equation}
\end{prop}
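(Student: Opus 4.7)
The plan is to first reduce the statement to the static limit $k=0$ via the compact-perturbation argument at the end of the proof of Proposition~\ref{prop:WellconditionedVectorPot}: the operator $\TA^k-\TA^0$ is compact, so its discrete eigenvalues accumulate at zero as $h\to 0$ and the $h$-independent conditioning bound established at $k=0$ carries over. In the static case, I would use the orthogonalities $\matL^\T \mat H = \vecO$ and $\matS^\T \mat H = \vecO$ to split $\Plh = \Pl + \mat P_{\mr H}$, where $\mat P_{\mr H} \coloneqq \mat H(\mat H^\T \mat H)^{-1}\mat H^\T$ is the orthogonal projector onto the quasi-harmonic subspace; in particular $\Pl \mat P_{\mr H} = \vecO$, and both are orthogonal to $\Ps$.

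Using this splitting, I would break the left-hand side into the two summands
\[
\Plh(\matTA^0)^\dagger \matL \mat G_{\lambda \lambda}^{-1}\matL^\T \matTA^0 \Plh \quad\text{and}\quad \Plh(\matTA^0)^\dagger \Plh \matTA^0 \Plh
\]
and analyze each on the loop and quasi-harmonic subspaces separately. On the pure loop subspace ($\vec x = \Pl \vec x$) the first summand already reduces to the quantity treated by Proposition~\ref{prop:WellconditionedVectorPot}, giving $\asymp \Pl$, and the second summand contributes a positive semi-definite term that only strengthens the lower bound. On the pure quasi-harmonic subspace ($\vec x = \mat P_{\mr H} \vec x$) I would lean on the second summand: since $\Plh \matTA^0 \mat P_{\mr H}$ contains the finite-dimensional diagonal block $\mat P_{\mr H} \matTA^0 \mat P_{\mr H}$, which is spectrally equivalent to $\mat P_{\mr H}$ by \eqref{eq:MultiplyConnectedHTAH} together with $\mat H^\T \mat H \asymp \mat I$ (valid because $\mat H$ is fixed under the structured refinement), this supplies the required lower bound. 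The matching upper bound follows from boundedness of every block of $\Plh \matTA^0 \Plh$: the loop block via Proposition~\ref{prop:WellconditionedExplicitTA}, the harmonic block via \eqref{eq:MultiplyConnectedHTAH}, and the off-diagonal blocks by submultiplicativity of the matrix norm.

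The main obstacle will be the lower bound for inputs mixing loop and quasi-harmonic components. Since the left-hand side has the form $B^\dagger B$, the lower bound is equivalent to bounding the smallest singular value of $B$ on the range of $\Plh$ away from zero uniformly in $h$; thus the off-diagonal couplings $\Pl \matTA^0 \mat P_{\mr H}$ and its adjoint must be shown not to push singular values toward zero. I would mirror the argument in the paragraph following \eqref{eq:TAExplicitLandH}: the explicit-basis preconditioner using $[\bmatL(\bmatL^\T\bmatL)^{-1/4},\mat H]$ is well-conditioned up to the loop null space, and the change of basis to the implicit projector formulation is bounded and invertible. Because the quasi-harmonic subspace has fixed, finite dimension under structured refinement, any singular-value shift caused by the off-diagonal blocks is $h$-independent and therefore does not affect the equivalence constants, yielding \eqref{eq:TAP2017:WellConditionedVectorPotwithH}.
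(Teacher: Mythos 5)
Your overall strategy shares the paper's ingredients --- reduction to $k=0$ by a compact-perturbation argument, \Cref{prop:WellconditionedVectorPot} for the loop block, \eqref{eq:MultiplyConnectedHTAH} for the harmonic block, and the well-conditioning of \eqref{eq:TAExplicitLandH} --- but the way you assemble them leaves a genuine gap at exactly the point you flag as the main obstacle. For a positive semi-definite quadratic form $\Vert \mat B \vec x\Vert_2^2$, coercivity on each of two orthogonal subspaces together with boundedness of the off-diagonal couplings does \emph{not} yield coercivity on mixed inputs: the cross terms can cancel the diagonal contributions. Your proposed fix --- that the coupling lives on a fixed finite-dimensional harmonic subspace, so any singular-value shift is $h$-independent and ``does not affect the equivalence constants'' --- does not close this: an $h$-independent shift can still push a singular value to, or arbitrarily close to, zero. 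The paper is explicit that this cannot be excluded even for the matrix \eqref{eq:TAExplicitLandH} that is linear in $\matTA^0$ (``we cannot exclude that some singular values are shifted close to zero by the off-diagonal block matrices''), so claiming the shift is harmless overstates what is available.

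The missing piece is the algebraic bridge the paper uses precisely so that no second round of block analysis is needed at the quadratic level. Granting $\vec x^\T \mat Q_{\bmatL \mat H}\mat T_{\bmatL \mat H}\mat Q_{\bmatL \mat H}\vec x \asymp \vec x^\T\vec x$ on $\(\Null \mat Q_{\bmatL \mat H}\)^\perp$, the paper deduces $\vec x^\T \mat Q_{\bmatL \mat H}^{2}\mat T_{\bmatL \mat H}^{2}\mat Q_{\bmatL \mat H}^{2}\vec x \asymp \vec x^\T\vec x$ and then invokes the identity \eqref{eq:TLHsquared}, which uses $\mat H^\T\bmatL=\matO$ and $\Ph=\mat H\mat H^\T$ to rewrite $\mat T_{\bmatL \mat H}^{2}$ with the inner factor $\bmatL\bmatL^\T+\Ph$; transforming back with $\begin{bmatrix}\bmatL\(\bmatL^\T\bmatL\)^{-1/2} & \mat H\end{bmatrix}$ produces exactly the sandwich $\Plh\matTA^0\(\bmatL\bmatL^\T+\Ph\)\matTA^0\Plh \asymp \Plh$, and the final replacement of $\Ph$ by $\Plh$ in the middle is justified separately because the extra term $\Plh\matTA^0\Pl\matTA^0\Plh$ is positive semi-definite and bounded. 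Without this squaring step your plan never converts the statement about the matrix that is \emph{linear} in $\matTA^0$ into one about the sandwich that is \emph{quadratic} in $\matTA^0$ with the middle factor $\matL\mat G_{\lambda\lambda}^{-1}\matL^\T+\Plh$, and the direct two-summand, subspace-by-subspace argument you substitute for it does not supply the lower bound on mixed loop/quasi-harmonic inputs.
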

\begin{proof}  
    We define 
    \begin{equation}
        \mat T_{\bmatL \mat H} \coloneqq  \begin{bmatrix} \bmatL^\T \\ \mat H^\T \end{bmatrix}  \matTA^0  \begin{bmatrix} \bmatL  & \mat H \end{bmatrix}\, ,
    \end{equation}
    \begin{equation}
        \mat Q_{\bmatL \mat H} \coloneqq  \begin{bmatrix} \(\bmatL^\T \bmatL \)^{-1/4} & \vecO  \\ \vecO & \matI \end{bmatrix}\, ,
    \end{equation}
    and observe
    \begin{equation}
        \begin{bmatrix} \(\bmatL^\T \bmatL \)^{-1/4} \bmatL^\T \\ \mat H^\T \end{bmatrix}  \matTA^0  \begin{bmatrix} \bmatL \(\bmatL^\T \bmatL \)^{-1/4} & \mat H \end{bmatrix} =  \mat Q_{\bmatL \mat H} \mat T_{\bmatL \mat H} \mat Q_{\bmatL \mat H}\, .
    \end{equation}
    In other words, we can interpret $\mat Q_{\bmatL \mat H}$ as a preconditioner for the standard loop/global loop discretized $ \mat T_{\bmatL \mat H} $. 
    
    We note that $ \mat T_{\bmatL \mat H} $ and $\mat Q_{\bmatL \mat H}$ are symmetric matrices and that they have the same null space (i.e., the null space due to the linear dependency of the loop functions). Summarizing, we have the Rayleigh quotient
    \begin{equation}
        \vec x^\T \mat Q_{\bmatL \mat H}  \mat T_{\bmatL \mat H}   \mat Q_{\bmatL \mat H}   \vec x \asymp  \vec x^\T \vec x \quad\text{for all~} \vec x \in \(\Null \mat Q_{\bmatL \mat H} \)^\perp\, .
    \end{equation}
    By using the substitution $\vec y =  \mat Q_{\bmatL \mat H} \vec x$, we obtain
    \begin{equation}
        \vec y^\T   \mat T_{\bmatL \mat H}  \vec y \asymp  \vec y^\T \mat Q_{\bmatL \mat H}^{-2}   \vec y \quad\text{for all~} \vec y \in \(\Null \mat Q_{\bmatL \mat H} \)^\perp
    \end{equation}
    from which immediately 
    \begin{equation}
        \vec y^\T   \mat T_{\bmatL \mat H}^2  \vec y \asymp  \vec y^\T \mat Q_{\bmatL \mat H}^{-4}   \vec y \quad\text{for all~} \vec y \in \(\Null \mat Q_{\bmatL \mat H} \)^\perp
    \end{equation}
    and thus
    \begin{equation}
        \vec x^\T   \mat Q_{\bmatL \mat H}^{2} \mat T_{\bmatL \mat H}^2  \mat Q_{\bmatL \mat H}^{2} \vec x \asymp  \vec x^\T   \vec x \quad\text{for all~} \vec x \in \(\Null \mat Q_{\bmatL \mat H} \)^\perp
    \end{equation}
    follows. 

    In addition, we observe that
    \begin{equation}
        \mat T_{\bmatL \mat H}^2 =   \begin{bmatrix} \bmatL^\T \\ \mat H^\T \end{bmatrix}  \matTA^0  \begin{bmatrix} \bmatL & \mat H \end{bmatrix}\begin{bmatrix} \bmatL^\T \\ \mat H^\T \end{bmatrix}  \matTA^0  \begin{bmatrix} \bmatL & \mat H \end{bmatrix}   = 
        \begin{bmatrix} \bmatL^\T \\ \mat H^\T \end{bmatrix}  \matTA^0  \(\bmatL \bmatL^\T + \mat H \mat H^\T \) \matTA^0  \begin{bmatrix} \bmatL & \mat H \end{bmatrix}
        \label{eq:TLHsquared}
    \end{equation}
    using $\mat H^\T \bmatL =\matO$.
    
    The global loop transformation matrix is not uniquely defined, but a possible transformation matrix can always be constructed from $\Ph \coloneqq \matI - \Pl - \Ps$ by using its \ac{SVD} so that $\mat H$ is the column space of it. Hence, we can always obtain
    \begin{equation}
        \Ph = \mat H \mat H^\T \, . \label{eq:ProjectorHH}
    \end{equation}

    By using \eqref{eq:TLHsquared} and \eqref{eq:ProjectorHH}, we yield
    \begin{equation}
        \mat Q_{\bmatL \mat H}^{2} \mat T_{\bmatL \mat H}^2  \mat Q_{\bmatL \mat H}^{2}
        =  \begin{bmatrix}      \(\bmatL^\T \bmatL \)^{-1/2}  \bmatL^\T \\ \mat H^\T \end{bmatrix}  \matTA^0  \(\bmatL \bmatL^\T + \Ph \) \matTA^0  \begin{bmatrix} \bmatL  \(\bmatL^\T \bmatL \)^{-1/2}  & \mat H \end{bmatrix} \, .
    \end{equation}
    
    We also note that the transformation $\begin{bmatrix} \bmatL  (\bmatL^\T \bmatL )^{-1/2}  & \mat H \end{bmatrix} $ is well-conditioned, in fact,
    \begin{equation}
        \begin{bmatrix} \bmatL  \(\bmatL^\T \bmatL \)^{-1/2}  & \mat H \end{bmatrix}  \begin{bmatrix}   \(\bmatL^\T \bmatL \)^{-1/2}  \bmatL^\T \\ \mat H^\T \end{bmatrix} =  \Plh \, .
    \end{equation}
    Thus we have
    \begin{equation}
        \vec x^\T \begin{bmatrix} \bmatL  \(\bmatL^\T \bmatL \)^{-1/2}  & \mat H \end{bmatrix} \mat Q_{\bmatL \mat H}^{2} \mat T_{\bmatL \mat H}^2  \mat Q_{\bmatL \mat H}^{2} \begin{bmatrix}   \(\bmatL^\T \bmatL \)^{-1/2}  \bmatL^\T \\ \mat H^\T \end{bmatrix}  \vec x^\T 
        \asymp \vec x^\T \Plh \vec x \quad\text{for all~} \vec x \in \R^N\, ,
    \end{equation}
    where we note that the preconditioned system matrix can be expressed as
    \begin{equation}
        \begin{bmatrix} \bmatL  \(\bmatL^\T \bmatL \)^{-1/2}  & \mat H \end{bmatrix} \mat Q_{\bmatL \mat H}^{2} \mat T_{\bmatL \mat H}^2  \mat Q_{\bmatL \mat H}^{2} \begin{bmatrix}   \(\bmatL^\T \bmatL \)^{-1/2}  \bmatL^\T \\ \mat H^\T \end{bmatrix} 
        =
        \Plh \matTA^0  \(\bmatL \bmatL^\T + \Ph \) \matTA^0  \Plh\, .
    \end{equation}
    We can replace $\Ph$ by $\Plh$ since the matrix $\Plh \matTA^0  \Pl \matTA^0  \Plh$ is symmetric, positive definite and
    \begin{equation}
        \norm{\Plh \matTA^0  \Pl \matTA^0  \Plh}_2 \lesssim  \norm{\Plh}_2 \norm{\matTA^0}_2 \norm{\Pl}_2 \norm{\matTA^0}_2 \norm{\Plh}_2 \lesssim 1
    \end{equation}
    is bounded, where $\norm{\matTA^0}_2 \lesssim 1$ follows from the compactness of $\TA$.
    Likewise, the dynamic kernel is a compact perturbation and by substituting back from $\bmatL$ to $\matL$ and $\mat G_{\lambda \lambda}^{-1}$, we obtain that the matrix
    \begin{equation}
        \Plh \(\matTA^k\)^\dagger \( \matL \mat G_{\lambda \lambda}^{-1} \matL^\T +  \Plh \) \matTA^k \Plh \vec x \asymp \vec x^\T \Plh \vec x \quad\text{for all~} \vec x \in \R^N
    \end{equation}
    is well-conditioned (up to its null space).
\end{proof}

\subsection{Scalar Potential}\label{sec:ScalarPotential}
In this section, we are going to establish that the matrix
\begin{equation}
    \Pgs^\T \(\matTPhi^k\)^\dagger \bPmS \matTPhi^k \Pgs 
\end{equation}
is well-conditioned (up to its null space).

As for the vector potential operator, we need some lemmas and auxiliary matrices.
We define the matrix
\begin{equation}
    \omat V  \coloneqq \(\matS^\T \matS \)^+ \matS^\T \matS \mat V \matS^\T \matS \(\matS^\T \matS \)^+\, .
\end{equation}
This matrix is important since it is connected to the scalar potential by (see \eqref{eq:TPhiandV})
\begin{equation}
    \omat V \equiv \(\matS^\T \matS \)^+ \matS^\T  \matTPhi^0 \matS \(\matS^\T \matS \)^+\, . \label{eq:omatVtoScalarPot}
\end{equation}
\begin{lem} \label{lem:VequivVhat}
    We have the spectral equivalence
    \begin{equation}
        \vec x^\T  \mat V \mat G_{pp}^{-1} \mat V \vec x \asymp \vec x^\T \omat V  \mat G_{pp}^{-1} \omat V \vec x \quad\text{for all~} \vec x \in   \(\Span \oS\)^\perp \, . \label{eq:VEquivVhat}
    \end{equation}
\end{lem}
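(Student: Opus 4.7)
The plan is to identify $(\matS^\T \matS)^+ \matS^\T \matS$ as the orthogonal projector $\mat P$ onto $(\Span \oS)^\perp$, so that $\omat V = \mat P \mat V \mat P$. For $\vec x \perp \oS$ we have $\mat P \vec x = \vec x$ and therefore $\omat V \vec x = \mat P \mat V \vec x = \mat V \vec x - \gamma\, \oS$ with $\gamma := \oS^\T \mat V \vec x / N_{\mr C}$; in other words, $\omat V \vec x$ differs from $\mat V \vec x$ only by a multiple of $\oS$. The equivalence will hinge on showing that this rank-one correction cannot dominate either of the two quadratic forms.

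Next, I would expand both quadratic forms directly. Using that $A_\Gamma = \oS^\T \mat G_{pp}^{-1} \oS = \sum_n A_n$ is mesh-independent, a short computation gives
\begin{equation*}
\vec x^\T \mat V \mat G_{pp}^{-1} \mat V \vec x
= \vec x^\T \omat V \mat G_{pp}^{-1} \omat V \vec x
+ 2\gamma\,(\omat V \vec x)^\T \mat G_{pp}^{-1} \oS
+ \gamma^2 A_\Gamma.
\end{equation*}
A Cauchy--Schwarz estimate in the $\mat G_{pp}^{-1}$-inner product bounds the cross term by $2|\gamma|\sqrt{A_\Gamma}\,\|\omat V \vec x\|_{\mat G_{pp}^{-1}}$, so both directions of $\asymp$ reduce to the single pointwise estimate
\begin{equation*}
|\gamma|\sqrt{A_\Gamma} \;\lesssim\; \|\omat V \vec x\|_{\mat G_{pp}^{-1}}, \qquad \vec x \in (\Span \oS)^\perp.
\end{equation*}

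To establish this, I would exploit the symmetry of $\mat V$, which yields $\oS^\T \mat V \vec x = (\mat V \oS - c\,\oS)^\T \vec x$ for any $c \in \R$ because $\oS^\T \vec x = 0$. Choosing $c$ so that $\vec u := \mat V \oS - c\,\oS \perp \oS$ and applying Cauchy--Schwarz in the pairing $(\mat G_{pp}^{1/2},\mat G_{pp}^{-1/2})$ gives $|\gamma| \lesssim \|\vec u\|_{\mat G_{pp}}\,\|\vec x\|_{\mat G_{pp}^{-1}}/N_{\mr C}$. The vector $\vec u$ is the Galerkin representation of the fluctuation of the bounded capacitary potential $\op V 1$, so $\|\vec u\|_{\mat G_{pp}}$ scales like $h^{-2}$, while $N_{\mr C}^{-1} \asymp h^2$, leaving an $O(1)$ prefactor. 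The $H^{-1/2}$-ellipticity of $\op V$ restricted to the $\oS$-orthogonal subspace then delivers $\|\omat V \vec x\|_{\mat G_{pp}^{-1}} \gtrsim \|\vec x\|_{\mat G_{pp}^{-1}}$ up to a mesh-independent constant, closing the chain.

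The main obstacle is the $h$-scaling bookkeeping in this last step: making precise that $\mat V \oS$ represents a smooth (mesh-independent) function up to the nodal weights, and translating the continuous coercivity of $\op V$ into a lower bound that uses only the $\mat G_{pp}^{-1}$-norm on the restricted subspace, without circularly invoking the spectral equivalence with $\wmat \Delta^{-1}$ that the remainder of \Cref{sec:ScalarPotential} is working to establish.
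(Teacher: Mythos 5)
Your opening moves are correct and coincide with the paper's: both identify $(\matS^\T \matS)^+ \matS^\T \matS$ with the orthogonal projector $\mat P = \matI - \oS\oS^\T/N_\mr{C}$ onto $(\Span \oS)^\perp$, so that $\omat V \vec x = \mat P \mat V \vec x = \mat V \vec x - \gamma \oS$ for $\vec x \perp \oS$. But the proposal then leaves the actual content of the lemma unproven. First, the reduction is logically one-sided: with an unspecified constant, $\abs{\gamma}\sqrt{A_\Gamma} \lesssim \norm{\omat V \vec x}_{\mat G_{pp}^{-1}}$ yields only the upper bound $\vec x^\T \mat V \mat G_{pp}^{-1} \mat V \vec x \lesssim \vec x^\T \omat V \mat G_{pp}^{-1} \omat V \vec x$; it does not give the reverse inequality, since after Cauchy--Schwarz the left-hand side is only bounded below by $(\norm{\omat V \vec x}_{\mat G_{pp}^{-1}} - \abs{\gamma}\sqrt{A_\Gamma})^2$, which can be arbitrarily small relative to $\norm{\omat V \vec x}_{\mat G_{pp}^{-1}}^2$. (That direction is in fact the easy one: $\mat G_{pp}^{-1}$ is diagonal with entries $A_n \asymp h^2$, so $\vec x^\T \mat V \mat G_{pp}^{-1} \mat V \vec x \asymp h^2 \norm{\mat V \vec x}_2^2 \geq h^2 \norm{\mat P \mat V \vec x}_2^2 \asymp \vec x^\T \omat V \mat G_{pp}^{-1} \omat V \vec x$ by the Euclidean orthogonality of the splitting.) Second, and decisively, the remaining estimate $\gamma^2 N_\mr{C} \lesssim \norm{\mat P \mat V \vec x}_2^2$ is never established, and the sketch offered for it does not have consistent scalings: $\oS$ is the coefficient vector of the piecewise constant function taking the value $1/A_n$ on cell $c_n$, which is of magnitude $h^{-2}$ (the constant function $1$ is represented by $\mat G_{pp}^{-1}\oS$, not by $\oS$), so $\vec u$ is not the Galerkin image of the fluctuation of $\op V 1$ but is larger by a factor $h^{-2}$, whence $\norm{\vec u}_{\mat G_{pp}} \asymp h^{-4}$ rather than $h^{-2}$ and your chain leaves an uncancelled $h^{-2}$; moreover, the concluding coercivity bound $\norm{\omat V \vec x}_{\mat G_{pp}^{-1}} \gtrsim \norm{\vec x}_{\mat G_{pp}^{-1}}$ is precisely the step you acknowledge you cannot obtain without circularity. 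So the crux of the lemma remains open in your write-up.

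For calibration: the paper's own proof takes the identical first step and then simply asserts $\vec x^\T \mat V \mat V \vec x = \vec x^\T \omat V \omat V \vec x$, i.e., it tacitly treats $\mat V \vec x$ as having no component along $\oS$. You have therefore correctly located the one point where genuine work is required --- controlling the rank-one correction $\gamma \oS$ --- but you have not supplied that control, so the proposal is not yet a proof.
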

\begin{proof}
    If $\vec x$ is such that  $\oS^\T \vec x = 0$, then we have
    \begin{equation}
        \matS^\T \matS \(\matS^\T \matS \)^+ \vec x = \vec x
    \end{equation} 
    since for a symmetric, positive semi-definite matrix the null space is orthogonal to the column and row space,
    and thus
    \begin{equation}
        \vec x^\T \mat V \vec x  \asymp \vec x^\T \omat V \vec x \, .
    \end{equation}
    Clearly, we have for such $\vec x$ also
    \begin{equation}
        \vec x^\T \mat V \mat V \vec x  = \vec x^\T \omat V \omat V \vec x \, ,
    \end{equation}
    and since $ \vec y^\T \mat G_{pp}^{-1} \vec y \asymp \vec y^\T \vec y$ holds for all $\vec y\in \R^{N_\mr{C}}$, we have
    \begin{equation}
        \vec x^\T \mat V \mat G_{pp}^{-1} \mat V \vec x  = \vec x^\T\omat V \mat G_{pp}^{-1} \omat V \vec x\quad\text{for all~} \vec x \in (\oS)^\perp  \, .
    \end{equation}
\end{proof}

\begin{cor}\label{lem:CLaplacianGraph_Sigma}
    We have the spectral equivalence
    \begin{equation}
        \vec x^\T \cwmat \Delta  \vec x \asymp \vec x^\T \( \matS^\T \matS + \oS \oS^\T h^4 \) \vec x\quad\text{for all~} \vec  x \in \R^{N_\mr{C}}\, .
    \end{equation}
\end{cor}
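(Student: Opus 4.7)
The plan is to mirror the proof of Lemma~\ref{lem:CLaplacianGraph} verbatim, with the primal quantities replaced by their dual counterparts throughout. The underlying reason this works is that the dual Laplace--Beltrami identity \eqref{eq:DualLaplace}, $\matS^\T \mat G_{\wveg f \wveg f} \matS = \wmat \Delta$, has exactly the same structure as the primal identity \eqref{eq:PrimalLaplace}, and the $L^2$-stability estimate \eqref{eq:WellGff} is stated symmetrically for both $\mat G_{\veg f \veg f}$ and $\mat G_{\wveg f \wveg f}$.

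First I would combine \eqref{eq:DualLaplace} with the equivalence $\vec x^\T \mat G_{\wveg f \wveg f} \vec x \asymp \vec x^\T \vec x$ from \eqref{eq:WellGff}, applied to the vector $\matS \vec x$, to obtain the dual analogue of \eqref{eq:Beltrami_Graph_Equiv}, namely
\begin{equation}
    \vec x^\T \wmat \Delta \vec x \asymp \vec x^\T \matS^\T \matS \vec x\quad\text{for all~} \vec x \in \R^{N_\mr{C}}\, .
\end{equation}
Next I would add the rank-one term $\oS \oS^\T h^4$ to both sides. Since $\wmat \Delta$, $\matS^\T \matS$, and $\oS \oS^\T h^4$ are all symmetric positive semi-definite, the two-sided bound is preserved under this common shift. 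Recalling that $\cwmat \Delta$ is defined, in direct analogy to $\cmat \Delta$, by
\begin{equation}
    \cwmat \Delta \coloneqq \wmat \Delta + \oS \oS^\T h^4\, ,
\end{equation}
the claimed equivalence follows.

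I do not anticipate any genuine obstacle: the statement is a direct dualization of Lemma~\ref{lem:CLaplacianGraph}, and every ingredient used in that proof has already been provided in primal/dual symmetric form earlier in the excerpt. The only point deserving a brief mental check is that $\matS \oS = \vecO$ (explicitly stated), so the null spaces of $\matS^\T \matS$ and of $\wmat \Delta$ both contain $\Span \oS$, ensuring the deflation term $\oS \oS^\T h^4$ acts consistently on both sides and renders both matrices positive definite on all of $\R^{N_\mr{C}}$.
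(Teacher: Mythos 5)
Your proposal is correct and follows essentially the same route as the paper, which simply invokes Lemma~\ref{lem:CLaplacianGraph}: the dual identity \eqref{eq:DualLaplace} together with \eqref{eq:WellGff} gives $\vec x^\T \wmat \Delta \vec x \asymp \vec x^\T \matS^\T \matS \vec x$, and adding the common positive semi-definite rank-one term preserves the equivalence. Your explicit check that $\matS\oS=\vecO$ aligns the null spaces is a sound (if implicit in the paper) detail.
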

\begin{proof}
    Follows from \Cref{lem:CLaplacianGraph}.
\end{proof}

\begin{lem} \label{lem:mixedGramEig}
    The vector $\oS $ is a right eigenvector of $\mat G_{\wt\lambda p}^{-\T}  $, that is, $ \oS = \mat G_{\wt\lambda p}^{-\T}  \oS$.
\end{lem}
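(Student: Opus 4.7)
The plan is to prove the equivalent statement $\mat G_{\wt\lambda p}^\T \oS = \oS$, which immediately rearranges to $\oS = \mat G_{\wt\lambda p}^{-\T} \oS$ as claimed. Since $\oS$ is the all-ones vector of length $N_\mr{C}$, this is nothing more than showing that every column of $\mat G_{\wt\lambda p}$ sums to one.

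First I would write out the $m$th entry of $\mat G_{\wt\lambda p}^\T \oS$ componentwise, pulling the finite sum inside the $L^2$-inner product:
\begin{equation*}
    \bigl(\mat G_{\wt\lambda p}^\T \oS\bigr)_m = \sum_{n} \bigl(\wt\lambda_n, p_m\bigr)_{L^2} = \int_\Gamma \Bigl(\sum_{n} \wt\lambda_n(\veg r)\Bigr)\, p_m(\veg r)\, \dd S(\veg r)\, .
\end{equation*}
The key observation is then the partition-of-unity property of the dual piecewise linear basis, which is the defining feature of $\wt\lambda_n$ emphasized in the discussion accompanying \Cref{fig:DPLF}: by construction $\sum_n \wt\lambda_n(\veg r) = 1$ for every $\veg r \in \Gamma$. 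Substituting this identity reduces the expression to $\int_\Gamma p_m(\veg r)\, \dd S(\veg r)$.

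To finish, I would use the definition \eqref{eq:patchstddef}, which tells us that $p_m = 1/A_m$ on the cell $c_m$ and vanishes elsewhere. Therefore $\int_\Gamma p_m\, \dd S = A_m/A_m = 1$, showing that every component of $\mat G_{\wt\lambda p}^\T \oS$ equals one, i.e., $\mat G_{\wt\lambda p}^\T \oS = \oS$, which proves the lemma.

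There is no real obstacle here: the argument is an immediate consequence of two carefully chosen normalizations, namely the partition-of-unity normalization of $\wt\lambda_n$ and the $1/A_m$ normalization of $p_m$. The only thing worth being careful about is making sure that the partition-of-unity identity holds globally on $\Gamma$ (including at vertices, edge midpoints, and inside each primal cell), which follows precisely from the value assignments on vertices and midpoints spelled out around \Cref{fig:DPLF} combined with the piecewise linearity on the pieces in between.
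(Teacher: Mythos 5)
Your proof is correct and follows essentially the same route as the paper's: both arguments reduce the claim to $\mat G_{\wt\lambda p}^\T \oS = \oS$ via the partition-of-unity property $\sum_n \wt\lambda_n \equiv 1$ together with the normalization $\int_\Gamma p_m \,\dd S = 1$. The paper merely packages the same computation as "represent the constant function $1$ in both bases and test with $p_m$," so there is nothing substantive to add.
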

\begin{proof}
    If
    \begin{equation}
        \wt \lambda_{\vec y} = p_{\vec x} = 1\quad\text{for all~} \veg r \in \Gamma\, , \label{eq:LandPEo}
    \end{equation}  
    then $\vec y = \oS$ and $\vecel{\vec x}_i =  A_i$, where $\wt \lambda_{\vec y} = \sum_{n=1}^{N_\mr{V} } \vecel{\vec y}_n \wt \lambda_n $ and $p_{\vec x} = \sum_{n=1}^{N_\mr{C}}  \vecel{\vec x}_n p_n$. Testing \eqref{eq:LandPEo} with $p_i$ yields
    \begin{equation}
        \mat G_{p\wt \lambda} \oS = \mat G_{pp} \vec x = \oS\, .
    \end{equation}
    Since $\mat G_{p\wt \lambda} = \mat G_{\wt \lambda p}^\T$, we have $ \oS = \mat G_{\wt \lambda p}^{-\T}\oS$.
\end{proof}

\begin{cor}
    For any mean value free vector $\vec x$, that is, $\oS^\T \vec x = 0$, we have that the vector $\mat G_{\wt \lambda p}^{-1} \vec x$ is mean value free as well.
\end{cor}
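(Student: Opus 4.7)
The plan is to reduce the corollary to a direct one-line consequence of the preceding \Cref{lem:mixedGramEig}. The hypothesis is $\oS^\T \vec x = 0$ and we want to show $\oS^\T \bigl( \mat G_{\wt \lambda p}^{-1} \vec x \bigr) = 0$.

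First, I would rewrite the quantity of interest by moving $\oS$ through the transpose, namely
\begin{equation*}
    \oS^\T \mat G_{\wt \lambda p}^{-1} \vec x = \bigl( \mat G_{\wt \lambda p}^{-\T} \oS \bigr)^\T \vec x \, .
\end{equation*}
Then I would apply \Cref{lem:mixedGramEig}, which states $\mat G_{\wt \lambda p}^{-\T} \oS = \oS$, to simplify the right-hand side to $\oS^\T \vec x$, which vanishes by the mean-value-free assumption.

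There is essentially no obstacle here: the corollary is a mechanical dualization of the eigenvector identity from the preceding lemma, and no further properties of the mixed Gram matrix or of the mesh are invoked. The only thing worth stating explicitly for the reader is why the manipulation is legitimate, i.e., that $\mat G_{\wt \lambda p}$ is invertible (which is implicit in the notation $\mat G_{\wt \lambda p}^{-1}$ already used throughout the section and in the formulation \eqref{eq:Pgs}).
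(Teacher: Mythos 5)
Your proof is correct and matches the paper's own argument: both reduce the claim to the identity $\oS^\T \mat G_{\wt\lambda p}^{-1} = \bigl(\mat G_{\wt\lambda p}^{-\T}\oS\bigr)^\T = \oS^\T$ furnished by \Cref{lem:mixedGramEig}. No gap.
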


\begin{proof}
    This follows from \Cref{lem:mixedGramEig} since if we have $\oS^\T \vec x = 0$, then
    \begin{equation}
        \oS^\T\mat G_{\wt\lambda p}^{-1} \vec x =  \oS^\T   \vec x = 0\, .
    \end{equation}
\end{proof}

\begin{prop} \label{prop:WellCondTPhi}
    We have the spectral equivalence
    \begin{equation}
        \vec x^\T \Pgs^\T \(\matTPhi^k\)^\dagger \bPmS \matTPhi^k \Pgs \vec x  \asymp \vec x^\T \Ps \vec x\quad\text{for all~} \vec x \in \R^{N}\, .\label{eq:WellConditionedScalarPot}
    \end{equation}
\end{prop}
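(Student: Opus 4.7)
The plan is to mirror the proof of \Cref{prop:WellconditionedVectorPot}, with each step replaced by its dual-mesh counterpart. First, substituting $\matTPhi^0 = \matS \mat V \matS^\T$ from \eqref{eq:TPhiandV} into $\Pgs^\T \(\matTPhi^0\)^\dagger \bPmS \matTPhi^0 \Pgs$ and exploiting the identity $\matS^\T\matS(\matS^\T\matS)^+ = (\matS^\T\matS)^+\matS^\T\matS = \matI - \oS\oS^\T/N_\mr{C}$ (the orthogonal projector onto $\(\Span\oS\)^\perp$), every internal occurrence of $\matS^\T\matS(\matS^\T\matS)^+$ is absorbed into the definition of $\omat V$, and a direct computation collapses the matrix to
\begin{equation*}
    \Pgs^\T \(\matTPhi^0\)^\dagger \bPmS \matTPhi^0 \Pgs = \matS\, \mat G_{\wt\lambda p}^{-\T}\, \omat V\, \mat G_{pp}^{-1}\, \omat V\, \mat G_{\wt\lambda p}^{-1}\, \matS^\T\, .
\end{equation*}

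For an arbitrary $\vec x \in \R^N$ I would then set $\vec w \coloneqq \matS^\T \vec x$ (automatically in $\(\Span\oS\)^\perp$ since $\matS\oS = \vecO$) and $\vec u \coloneqq \mat G_{\wt\lambda p}^{-1} \vec w$ (still mean-value free by the corollary to \Cref{lem:mixedGramEig}). The left-hand side becomes $\vec u^\T \omat V \mat G_{pp}^{-1} \omat V \vec u$, which \Cref{lem:VequivVhat} shows to be $\asymp \vec u^\T \mat V \mat G_{pp}^{-1} \mat V \vec u$. On the right, $\vec x^\T \Ps \vec x = \vec w^\T (\matS^\T\matS)^+ \vec w$, and \Cref{lem:CLaplacianGraph_Sigma}, together with inversion on $\(\Span\oS\)^\perp$, recasts this as $\asymp \vec w^\T \cwmat\Delta^{-1} \vec w$.

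The decisive step is then to establish the dual scalar-potential analogue of \eqref{eq:WWDelta}, namely
\begin{equation*}
    \vec u^\T \mat V \mat G_{pp}^{-1} \mat V \vec u \asymp \vec u^\T \mat G_{\wt\lambda p}^\T\, \cwmat\Delta^{-1}\, \mat G_{\wt\lambda p}\, \vec u \quad\text{for all~}\vec u \in \(\Span\oS\)^\perp\, ,
\end{equation*}
and I expect this to be the principal obstacle. It is not a direct consequence of \eqref{eq:WWdualDelta}: one has to invert the Rayleigh-quotient inequality there on the mean-value-free subspace and transport it from the dual piecewise-linear basis $X_{\wt\lambda}$ to the primal patch basis $X_p$ via the mixed Gram matrix $\mat G_{\wt\lambda p}$, using \Cref{lem:mixedGramEig} to guarantee that this transition preserves $\(\Span\oS\)^\perp$. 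Equivalently, the Appendix construction proving \eqref{eq:WWDelta} applies with the primal and dual roles swapped and with $\op V$ replacing $\op W$, the Calder\'on identity supplying the algebraic link between the two discretizations.

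Granted the above, back-substituting $\vec w = \mat G_{\wt\lambda p} \vec u$ yields $\vec u^\T \mat G_{\wt\lambda p}^\T \cwmat\Delta^{-1} \mat G_{\wt\lambda p} \vec u = \vec w^\T \cwmat\Delta^{-1} \vec w \asymp \vec x^\T \Ps \vec x$, settling the static case $k=0$. As in the closing paragraph of the proof of \Cref{prop:WellconditionedVectorPot}, the dynamic kernel $\matTPhi^k - \matTPhi^0$ acts as a compact perturbation whose discrete eigenvalues accumulate at zero as $h \to 0$, so an $h$-independent upper bound on the condition number persists for all $k \neq 0$. Observing that both the preconditioned matrix and $\Ps$ vanish on the common subspace $\{\vec x : \matS^\T \vec x = \vecO\}$ identifies their null spaces and completes the equivalence \eqref{eq:WellConditionedScalarPot}.
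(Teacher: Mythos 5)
Your proposal is correct and follows essentially the same route as the paper's proof, just run in the opposite direction: the paper starts from \eqref{eq:WWdualDelta}, inverts the Rayleigh quotient, invokes the Calderón-identity equivalences $\vec x^\T \htmat W \vec x \asymp \vec x^\T \mat G_{\wt \lambda p}^{-\T} \mat V^{-1} \mat G_{\wt \lambda p}^{-1} \vec x$ and $\vec x^\T \mat G_{pp}^{-1} \vec x \asymp \vec x^\T \mat G_{\wt\lambda p}^{-1} \mat G_{\wt \lambda \wt \lambda} \mat G_{\wt\lambda p}^{-\T}\vec x$ (both cited to the operator-preconditioning literature), and then arrives at exactly your collapsed form $\matS \mat G_{\wt\lambda p}^{-\T} \omat V \mat G_{pp}^{-1} \omat V \mat G_{\wt\lambda p}^{-1} \matS^\T$ via \Cref{lem:VequivVhat} and the substitution $\vec x = \matS^\T \vec y$. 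The ``decisive step'' you flag is therefore resolved in the paper precisely by the first of the two routes you sketch---inverting \eqref{eq:WWdualDelta} and transporting through $\mat G_{\wt\lambda p}$ with the Calderón identity supplying the link between $\htmat W$ and $\mat V$---rather than by re-running the Appendix argument with $\op V$ in place of $\op W$; the treatment of the dynamic kernel as a compact perturbation matches as well.
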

\begin{proof}
    We start with \eqref{eq:WWdualDelta}, that is
    \begin{equation}
        \vec x^\T \htmat{W} \mat G_{\wt\lambda \wt\lambda}^{-1}  \htmat{W} \vec x \asymp \vec x^\T \htmat{\Delta} \vec x\quad\text{for all~} \vec x \in  \R^{N_\mr{C}} \label{eq:WGWDeltaDual}
    \end{equation}
    and applying the substitution $\vec x = \mat G_{\wt \lambda \wt \lambda}^{-1/2} \vec y$ yields
    \begin{equation}
        \vec y^\T  \mat G_{\wt \lambda \wt \lambda}^{-1/2} \htmat{W} \mat G_{\wt\lambda \wt\lambda}^{-1}\htmat{W} \mat G_{\wt\lambda \wt\lambda}^{-1/2} \vec y 
        = \vec y^\T \( \mat G_{\wt \lambda \wt \lambda}^{-1/2} \htmat{W} \mat G_{\wt\lambda \wt\lambda}^{-1/2}\)^2 \vec y  
        \asymp \vec y^\T  \mat G_{\wt \lambda \wt \lambda}^{-1/2} \htmat \Delta  \mat G_{\wt \lambda \wt \lambda}^{-1/2} \vec y\quad\text{for all~} \vec y \in \R^{N_\mr{C}}\, ,\label{eq:wWwWwLapL}
    \end{equation}
    and hence
    \begin{equation}
        \vec y^\T  \mat G_{\wt \lambda \wt \lambda}^{1/2} \htmat{W}^{-1} \mat G_{\wt\lambda \wt\lambda}^{1/2} \vec y \asymp \vec y^\T   \(\mat G_{\wt \lambda \wt \lambda}^{-1/2} \htmat \Delta  \mat G_{\wt \lambda \wt \lambda}^{-1/2} \)^{-1/2}\vec y\quad\text{for all~} \vec y \in \R^{N_\mr{C}} \, .\label{eq:DualWLapL}
    \end{equation}
    From the Calderón identities and the theory outlined in \cite{steinbach_construction_1998, buffa_dual_2007, hiptmair_operator_2006}, we have 
    \begin{equation}
        \vec x^\T \htmat W \vec x \asymp  \vec x^\T \mat G_{\wt  \lambda p}^{-\T}  \mat V^{-1} \mat G_{\wt  \lambda p}^{-1} \vec x \quad\text{for all~} \vec x\in \R^{N_\mr{C}} \, .
    \end{equation}
    Inserting this in \eqref{eq:DualWLapL}, applying the back-substitution $\vec y = \mat G_{\wt \lambda \wt \lambda}^{1/2} \vec x$,  squaring the matrices on both sides and inverting them, we obtain
    \begin{equation}
        \vec x^\T \mat G_{\wt\lambda p}^{-\T}  \mat V \mat G_{\wt\lambda p}^{-1} \mat G_{\wt \lambda \wt \lambda} \mat G_{\wt\lambda p}^{-\T}  \mat V \mat G_{\wt\lambda p}^{-1}  \vec x  \asymp \vec x^\T \htmat \Delta^{-1} \vec x \quad\text{for all~} \vec x \in \R^{N_\mr{C}}\, . \label{eq:HerewesimplifyRHS}
    \end{equation}
    The right-hand side can be simplified:
    it was shown in \cite{andriulli_wellconditioned_2013a} that
    \begin{equation}
        \( \matS^\T \matS + \oS \oS^\T / N_\mr{C} \)^{-1} =  \( \matS^\T \matS\)^{+} +  \oS \oS^\T  /N_\mr{C}
    \end{equation}
    holds.
    In addition with $N_\mr{C} \asymp 1/h^2$ and \Cref{lem:CLaplacianGraph_Sigma}, we can simplify the right-hand side of \eqref{eq:HerewesimplifyRHS}  yielding
    \begin{equation}
        \vec x^\T \mat G_{\wt\lambda p}^{-\T}  \mat V \mat G_{\wt\lambda p}^{-1} \mat G_{\wt \lambda \wt \lambda} \mat G_{\wt\lambda p}^{-\T}  \mat V \mat G_{\wt\lambda p}^{-1}  \vec x  \asymp \vec x^\T \(  \( \matS^\T \matS\)^{+} +  \oS \oS^\T \) \vec x \quad\text{for all~} \vec x \in \R^{N_\mr{C}}\, . \label{eq:BeforeGduallamdagoes}
    \end{equation}
    From \cite{steinbach_construction_1998, buffa_dual_2007}, we can obtain 
    \begin{equation}
        \vec x^\T \mat G_{pp}^{-1} \vec x \asymp   \vec x^\T   \mat G_{\wt\lambda p}^{-1} \mat G_{\wt \lambda \wt \lambda} \mat G_{\wt\lambda p}^{-\T}\vec x \quad\text{for all~} \vec x \in \R^{N_\mr{C}}\, .
    \end{equation}
    Inserting this into \eqref{eq:BeforeGduallamdagoes} yields
    \begin{equation}
        \vec x^\T \mat G_{\wt\lambda p}^{-\T} \mat V \mat G_{pp}^{-1} \mat V \mat G_{\wt\lambda p}^{-1}  \vec x   \asymp \vec x^\T \(  \( \matS^\T \matS\)^{+} +  \oS \oS^\T \) \vec x \quad\text{for all~} \vec x \in \R^{N_\mr{C}}\, .
    \end{equation}
    Then we use the substitution $\vec x = \matS^\T \vec y$ and obtain
    \begin{equation}
        \vec y^\T  \matS \mat G_{\wt\lambda p}^{-\T}  \mat V \mat G_{pp}^{-1} \mat V \mat G_{\wt\lambda p}^{-1} \matS^\T \vec y \\ \asymp \vec y^\T \Ps \vec y \quad\text{for all~} \vec y \in \R^{N}\, ,
    \end{equation}
    since  $\matS \oS = \vecO$. Due to this relationship, it is clear that all vectors $\matS^\T \vec y$ with $\vec y \in \R^N$ are mean value free, that is, we have  $\oS^\T \matS^\T \vec y = 0$. Thus we can invoke \Cref{lem:VequivVhat} and obtain
    \begin{equation}
        \vec y^\T  \matS \mat G_{\wt\lambda p}^{-\T}  \omat V \mat G_{pp}^{-1} \omat V \mat G_{\wt\lambda p}^{-1} \matS^\T \vec y \\ \asymp \vec y^\T \Ps \vec y \quad\text{for all~} \vec y \in \R^{N}\, . \label{eq:TPhiComparison}
    \end{equation}
    Inserting the right-hand side from \eqref{eq:omatVtoScalarPot}, we obtain
    \begin{equation} 
        \vec y^\T \Pgs^\T \matTPhi^0 \bPmS \matTPhi^0 \Pgs \vec y  \asymp \vec y^\T \Ps \vec y \quad\text{for all~} \vec y \in \R^{N}\, , 
    \end{equation} 
    where $\Pgs$ has been defined in \eqref{eq:Pgs}.
    As in \Cref{prop:WellconditionedVectorPot}, we note that the dynamic kernel only introduces a compact perturbation, and that by using $(\matTPhi^k)^\dagger$ for the left scalar potential operator matrix in \eqref{eq:WellConditionedScalarPot}, we yield a symmetric, positive semi-definite system.
\end{proof}

\subsection{Preconditioned Electric Field Integral Equation}\label{sec:EFIEWell}
\begin{prop}
    The new formulation is well-conditioned in the static limit, that is, the matrix in \eqref{eq:NewFormulation} satisfies
    \begin{equation}
        \lim_{k \rightarrow 0} \vec x^\T \bPo^\dagger \mat T^\dagger \bPm \mat T \bPo  \vec x \asymp \vec x^\T \vec x\quad\text{for all~} \vec x \in \R^N\, . \label{eq:FinalInequalityNewCalderon}
    \end{equation}
\end{prop}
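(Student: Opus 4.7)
The plan is to exploit the static-limit decomposition of $\bPo^\dagger \mat T^\dagger \bPm \mat T \bPo$ already computed in \eqref{eq:LastProofBigUmformung}. Every summand there except the first two carries a strictly positive power of $k$, so letting $k \to 0$ leaves only
\[
\lim_{k\to 0}\bPo^\dagger \mat T^\dagger \bPm \mat T \bPo = \Plh^\dagger (\matTA^0)^\dagger \PmL \matTA^0 \Plh + \Pgs^\dagger (\matTPhi^0)^\dagger \bPmS \matTPhi^0 \Pgs.
\]
I would then apply the two results already established. \Cref{prop:WellconditionedVectorPotwithH} gives the vector-potential block the spectral equivalence $\vec x^\T \Plh^\dagger (\matTA^0)^\dagger \PmL \matTA^0 \Plh \vec x \asymp \vec x^\T \Plh \vec x$, and \Cref{prop:WellCondTPhi} gives the scalar-potential block $\vec x^\T \Pgs^\dagger (\matTPhi^0)^\dagger \bPmS \matTPhi^0 \Pgs \vec x \asymp \vec x^\T \Ps \vec x$, both for all $\vec x \in \R^N$.

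The heart of the argument is the combination of these two bounds. Both blocks are Hermitian positive semi-definite---the first because $\PmL = \matL \mat G_{\lambda\lambda}^{-1} \matL^\T + \Plh$ is a sum of PSD matrices sandwiched symmetrically as $A^\dagger \PmL A$, the second analogously through $\bPmS$ and the explicit $(\matTPhi^k)^\dagger$ factor. Spectral equivalences on such forms are preserved under summation with new constants given by the minimum of the two lower and the maximum of the two upper constants. I thereby obtain
\[
\vec x^\T \bPo^\dagger \mat T^\dagger \bPm \mat T \bPo \vec x \asymp \vec x^\T (\Plh + \Ps) \vec x \quad\text{for all~} \vec x \in \R^N.
\]
By the defining identity $\Plh = \matI - \Ps$, the right-hand side is exactly $\vec x^\T \vec x$, which is precisely \eqref{eq:FinalInequalityNewCalderon}.

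The step requiring the most care, and already flagged in the paragraph following the static-limit expansion, is the orthogonal-sum structure of $\Plh$ and $\Ps$: if their ranges overlapped or failed to span $\R^N$, then $\Plh + \Ps$ would only equal a strictly smaller orthogonal projector and the preconditioned matrix would inherit a residual null space rather than being uniformly well-conditioned. This is ruled out by construction, since $\matL^\T \matS = \vecO$ gives $\Plh \Ps = \vecO$ and the definition $\Plh \coloneqq \matI - \Ps$ forces $\Plh + \Ps = \matI$. This simple algebraic fact is what converts two individually degenerate Rayleigh equivalences into the full-rank well-conditioning claim \eqref{eq:FinalInequalityNewCalderon}.
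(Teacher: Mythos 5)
Your proposal is correct and follows essentially the same route as the paper: take the static limit of \eqref{eq:LastProofBigUmformung}, invoke \Cref{prop:WellconditionedVectorPotwithH} and \Cref{prop:WellCondTPhi} for the two blocks, and combine the resulting Rayleigh-quotient equivalences using the complementarity $\Plh + \Ps = \matI$ (the paper phrases this as the orthogonality of the null spaces of $\Plh$ and $\Pgs$). Your explicit justification that summation of the two PSD equivalences is legitimate, and that the sum of the right-hand sides is the identity, is a slightly more careful rendering of exactly the argument the paper gives.
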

\begin{proof}
    We recall the result from \eqref{eq:LastProofBigUmformung}, where we found
    \begin{equation}
        \lim_{k \rightarrow 0}     \bPo^\dagger \mat T^\dagger \bPm \mat T \bPo  =  \Plh^\dagger \(\matTA^0\)^\dagger \PmL \matTA^0 \Plh + \Pgs^\dagger \(\matTPhi^0\)^\dagger \bPmS \matTPhi^0 \Pgs\, .
    \end{equation}
    Clearly, the new formulation is low-frequency stable and the well-conditionedness in $h$ follows from the orthogonality of the null spaces of $\Plh$ and $\Pgs$,  \Cref{prop:WellconditionedVectorPotwithH}, and \Cref{prop:WellCondTPhi} so that we have
    \begin{equation}
        \vec x^\T  \( \Plh^\dagger \(\matTA^0\)^\dagger \PmL \matTA^0 \Plh + \Pgs^\dagger \(\matTPhi^0\)^\dagger \bPmS \matTPhi^0 \Pgs\) \vec x \asymp  \vec x^\T \vec x \quad\text{for all~} \vec x \in \R^N\, . \label{eq:AdditionOfPrincipalParts}
    \end{equation}
\end{proof}
For the dynamic case, we note that the additional terms appearing in \eqref{eq:LastProofBigUmformung} have at least up to a certain frequency a smaller norm than the principal terms in \eqref{eq:AdditionOfPrincipalParts}. Numerical evidence suggests that the frequency starts to impact the iterative solver when the wavelength is the range of, or even larger than, the electric size of the geometry.

\subsection{Analysis of the Computational Complexity}
An important aspect for any preconditioner is its computational complexity (i.e., how time and memory consumption scale when $N$ is increased) and it is important to ensure that the complexity improves compared with the original formulation. When a fast method such as the \ac{MLFMM} or the \ac{ACA} is used, then the cost for obtaining $\mat T$ and the memory consumption scale as $\mc O(N \log N)$ in the admissible frequency range (i.e., admissible with respect to the employed fast method). The cost for obtaining the Gram matrices and the loop and star transformation matrices is $\mc O(N)$, since the number of neighboring cells or vertices is independent of $N$. The matrix $\mat T^\dagger$ does not need to be computed explicitly. In fact, we note that 
\begin{equation}
    \mat T^\dagger \vec x = \overline{\overline{\mat T^\dagger \vec x } } = \overline{\overline{\overline{\mat T}^\T \vec x } } = \overline{\mat T^\T \overline{\vec x } } = \overline{\mat T \overline{\vec x }}\, .
\end{equation}

If an iterative solver is used, the complexity to obtain a solution is $\mc O(N_\text{iter} N \log N)$: it is the product of the total number of iterations and the cost of a single matrix-vector product. Since all the appearing Gram matrices  have $\mc O(N)$ non-zero elements, the complexity of a single matrix-vector product is $\mc O(N)$. As the Gram matrices are well-conditioned in $h$, the cost of a single matrix-vector product involving an inverse Gram matrix is $\mc O(N)$ as well. For the projectors, graph Laplacian systems must be solved. For this task, algorithms with an observed complexity of $\mc O(N)$ (e.g., algebraic multigrid techniques such as \cite{livne_lean_2012}) or with a guaranteed complexity of $\mc O\(N  \log^{\mc O(1)}N\)$ (e.g., the simple, combinatorial solver presented in \cite{kelner_simple_2013}) are available. 

We have shown in this contribution, that the condition number of the preconditioned system matrix is bounded independent from $h$. If the conjugate gradient solver is used to solve for $\vec x$ in a linear system of equations $\mat A \vec x = \vec b$, where $\mat A$ is an \ac{HPD} matrix, then the number of iterations is bounded by 
\begin{equation} 
    N_\text{iter} \leq \left\lceil (1/2) \sqrt{\cond\( \mat A\)} \log\(2/\varepsilon \) \right\rceil \label{eq:Niter}
\end{equation}
where $\varepsilon$ is the solver tolerance. Thus if the \ac{CG} method is used to solve \eqref{eq:NewFormulation}, the number of iterations is bounded independent of $h$.  

If we assume that the costs for solving the graph Laplacian systems are $\mc O(N)$, then the complexity of a single iteration is $\mc O(N \log N)$ and thus together with \eqref{eq:Niter} the total costs for obtaining a solution is $\mc O(N \log N)$. We note that if we use \eqref{eq:Niter} also in the case of the unpreconditioned $\mat T$, then the complexity is $\mc O(N^{1.5} \log N)$ (note that this bound is actually not applicable since the conjugate gradient method cannot be used for non-Hermitian and indefinite matrices such as $\mat T$, but other Krylov methods do not have, to the best of the authors' knowledge, a sharper bound).

\section{Numerical Results}\label{sec:NumericalResults}
For the implementation of the preconditioner, we did not use the wavenumber $k$ directly to cure the low-frequency breakdown. Instead, we used the following definitions
\begin{align}
    \Po &\coloneq  \Plh / \alpha + \im \Ps / \beta\, , \\
    \PmL &\coloneq  \matL \mat G_{\lambda \lambda}^{-1} \matL^\T/\alpha^2 + \Plh/\gamma\, , \\
    \PmS &\coloneq \matS \(\matS^\T \matS\)^+ \mat G_{pp}^{-1} \(\matS^\T \matS\)^+  \matS^\T /\beta^2 \, ,             
\end{align}    
where 
\begin{align}
    \alpha &= \sqrt[4]{\norm{ \Plh \matTA^\dagger \matL \mat G_{\lambda \lambda}^{-1} \matL^\T\matTA \Plh}_2}\, , \\
    \beta &= \sqrt[4]{\norm{\Ps \matTPhi^\dagger \PmS \matTPhi \Ps}_2}\, , \\
    \gamma  &= \norm{(\Plh/\alpha) \matTA^\dagger \Plh \matTA (\Plh/\alpha) }_2\, .
\end{align}
These norms are estimated using the power iteration algorithm. Typically the condition number obtained by using norms is lower than using
\begin{align}
    \alpha &= \sqrt{k}\, , \\
    \beta &= 1/\sqrt{k}\, , \\
    \gamma  &= k\, ;
\end{align}
thereby, the number of iterations used by a Krylov subspace method is reduced (and this saving usually outweighs the costs for estimating the norms).

First, we considered a sphere, radius \SI{1}{\meter}, to confirm the low-frequency stability by computing the condition number obtained by the new formulation and compared it with a loop-tree preconditioned system. \Cref{fig:SphereF} shows that the new formulation is frequency stable and \Cref{fig:Sphere_RCS} that the bistatic radar cross section can be accurately computed down to \SI{1e-25}{\hertz}. The saturation of the condition number in the case ``no preconditioner'' stems from numerical cancellation: the null space of $\matTPhi$ exists only up to numerical precision and when $k$ becomes too small, the (numerical) norm of the null space of $\matTPhi$ is larger than the norm  of $\matTA$ so that $\matTA$ completely vanishes in numerical noise. 
To verify the dense-discretization stability, we computed the condition number for the new formulation and the loop-tree preconditioned system for an increasing spectral index $1/h$. We can see from \Cref{fig:SphereH} that the new formulation is dense-discretization stable, whereas the loop-tree preconditioner is not.

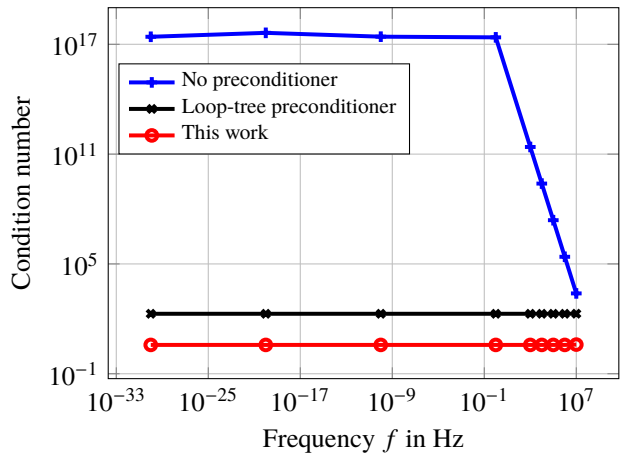
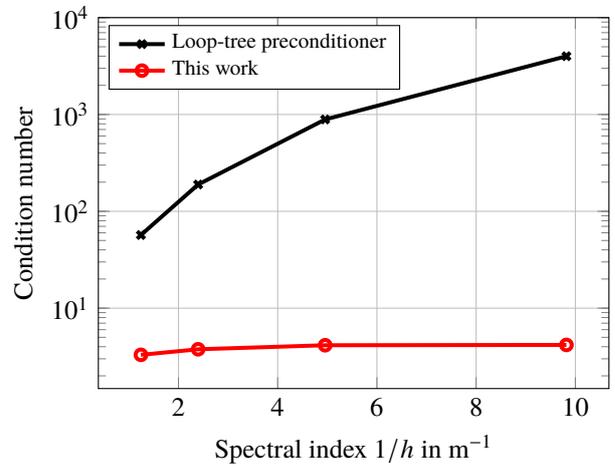
\begin{figure}
    \begin{subfigure}[c]{0.49\textwidth}
        \centering
        \begin{tikzpicture}
\begin{axis}[%
width=8.3cm,
height=6.5cm,
xlabel={Spectral index $1/h$  in \si{\per\meter}},
xmajorgrids,
xmode=log,
ymode=log,
xlabel={Frequency $f$ in Hz},
ylabel={Condition number},
ymax=10000000000000000000,
ymajorgrids,
legend style={ anchor=north west,
    at={(0.02,0.85)}
}
]

\pgfplotstableread[col sep=semicolon,  header=true]{./Results/Elsevier_Sphere_Freq_sindex.csv}\MIGrcsfielddatatable

\addplot [color=blue,  mark=+]
table[x=f, y=0]   from \MIGrcsfielddatatable {};
\addlegendentry{No preconditioner};

\addplot [color=black,  mark=x]
table[x=f, y=100]   from \MIGrcsfielddatatable {};
\addlegendentry{Loop-tree preconditioner};

\addplot [color=red , mark=o]
table[x=f, y=30]   from \MIGrcsfielddatatable {};
\addlegendentry{This work};


\end{axis}
\end{tikzpicture}%
        \subcaption{The condition number as a function of the frequency. The spectral index is $\SI{2.4}{\per\meter}$.}
        \label{fig:SphereF}
    \end{subfigure}
    \quad
    \begin{subfigure}[c]{0.49\textwidth}
        \centering
        \begin{tikzpicture}
\begin{axis}[%
width=8.3cm,
height=6.5cm,
xlabel={Spectral index $1/h$  in \si{\per\meter}},
xmajorgrids,
ymode=log,
ylabel={Condition number},
ymax=10000,
ymajorgrids,
legend style={ anchor=north west,
    at={(0.02,0.98)}
}
]

\pgfplotstableread[col sep=semicolon,  header=true]{./Results/New_Sphere_Cond_sindex.csv}\MIGrcsfielddatatable

\addplot [color=black,  mark=x]
table[x=sindex, y=100]   from \MIGrcsfielddatatable {};
\addlegendentry{Loop-tree preconditioner};

\addplot [color=red , mark=o]
table[x=sindex, y=30]   from \MIGrcsfielddatatable {};
\addlegendentry{This work};


\end{axis}
\end{tikzpicture}%
        \subcaption{The condition number as a function of the spectral index. The frequency is $\SI{1}{\mega\hertz}$.}
        \label{fig:SphereH} 
    \end{subfigure}
    \caption{Sphere: spectral analysis.}
\end{figure}

\begin{figure}
    \begin{subfigure}[c]{0.49\textwidth}
        \centering
        \begin{tikzpicture}
\begin{axis}[%
width=8.3cm,
height=6.5cm,
xlabel={Spectral index $1/h$  in \si{\per\meter}},
xmajorgrids,
xticklabel={\pgfmathprintnumber[precision=1]{\tick}\si{\degree}},
xlabel={Angle $\theta$},
ylabel={RCS in dBsm},
ymajorgrids,
legend style={ anchor=south west,
    at={(0.02,0.02)}
}
]

\pgfplotstableread[col sep=semicolon,  header=true]{./Results/NewCalderon_RCS_SphereR1h01_F1.0e6.csv}\MIGrcsfielddatatable

\addplot [color=black]
table[x=theta, y=eabs_rcs_mie]   from \MIGrcsfielddatatable {};
\addlegendentry{Mie series};

\addplot [color=red , only marks, mark=o]
table[x=theta, y=eabs_rcs_31]   from \MIGrcsfielddatatable {};
\addlegendentry{This work};


\end{axis}
\end{tikzpicture}%
        \subcaption{Frequency: \SI{1}{\mega\hertz}.}
        \label{fig:FarField1M}
    \end{subfigure}
    \quad
    \begin{subfigure}[c]{0.49\textwidth}
        \centering
        \begin{tikzpicture}
\begin{axis}[%
width=8.3cm,
height=6.5cm,
xlabel={Spectral index $1/h$  in \si{\per\meter}},
xmajorgrids,
xticklabel={\pgfmathprintnumber[precision=1]{\tick}\si{\degree}},
xlabel={Angle $\theta$},
ylabel={RCS in dBsm},
ymajorgrids,
legend style={ anchor=south west,
    at={(0.02,0.02)}
}
]

\pgfplotstableread[col sep=semicolon,  header=true]{./Results/NewCalderon_RCS_SphereR1h01_F1.0e-25.csv}\MIGrcsfielddatatable

\addplot [color=black]
table[x=theta, y=eabs_rcs_mie]   from \MIGrcsfielddatatable {};
\addlegendentry{Mie series};

\addplot [color=red , only marks, mark=o]
table[x=theta, y=eabs_rcs_30]   from \MIGrcsfielddatatable {};
\addlegendentry{This work};


\end{axis}
\end{tikzpicture}%
        \subcaption{Frequency: \SI{1e-25}{\hertz}.}
        \label{fig:FarFieldM30Hz} 
    \end{subfigure}
    \caption{Sphere: bistatic radar cross section.}
    \label{fig:Sphere_RCS} 
\end{figure}
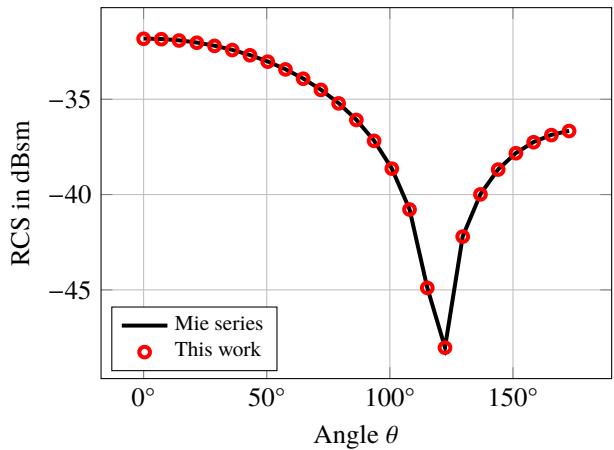
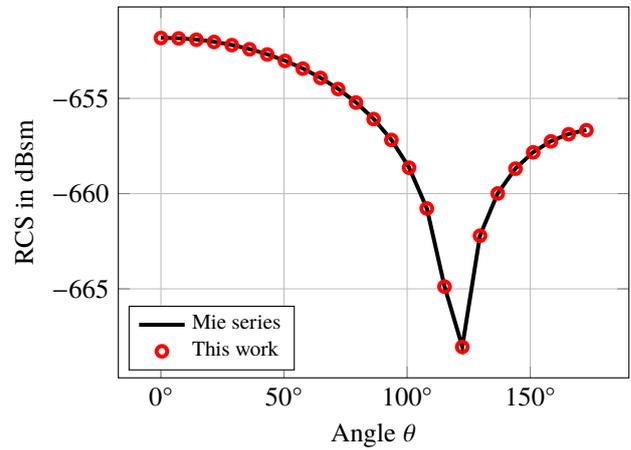

In order to verify that the proposed method works also for topologically non-trivial geometries, we used a simple, multiply connected structure, where we refined the mesh structuredly. As for the sphere, we compared the proposed preconditioner against a loop-tree preconditioner. \Cref{fig:CubeTorusH} and \Cref{fig:CubeTorusF} show that the proposed scheme remains stable for multiply connected geometries.

\begin{figure}
    \begin{subfigure}[c]{0.49\textwidth}
        \centering
        \input{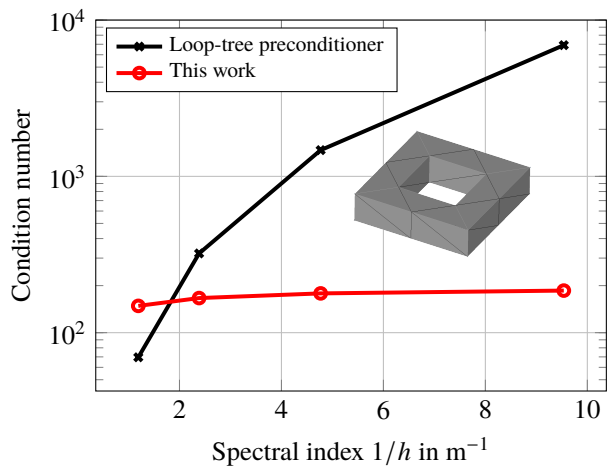}
        \caption{Toroidal structure: the condition number as a function of the spectral index. The frequency is $\SI{1}{\mega\hertz}$.}
        \label{fig:CubeTorusH} 
    \end{subfigure}
    \quad
    \begin{subfigure}[c]{0.49\textwidth}
        \centering
        \input{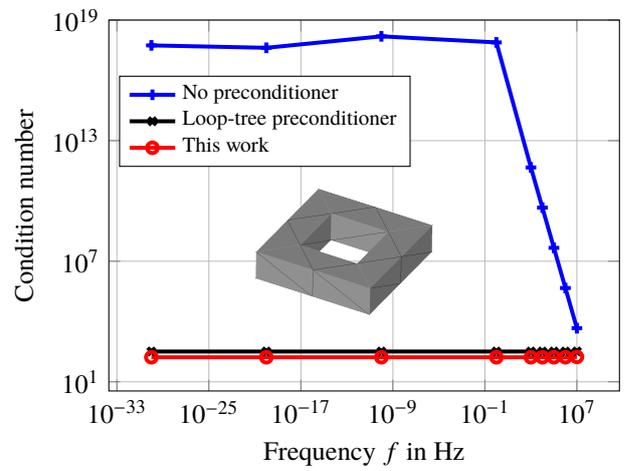}
        \caption{Toroidal structure: the condition number as a function of the frequency. The spectral index is $\SI{2.4}{\per\meter}$.}
        \label{fig:CubeTorusF} 
    \end{subfigure}
    \caption{Spectral analysis of an open and a topologically non-trivial structure.}
\end{figure}

Finally, we considered a more realistic structure. To compress the system matrix, we used an \ac{ACA} with tolerance \num{1e-4}. Other fast methods could be used as well as long as they are admissible in the respective frequency range. For typical frequency ranges no modification of these fast methods or any other part of the original code is necessary. For extremely low frequencies, however, it is necessary to avoid numerical cancellation of the vector potential due to the scalar potential. This can be either achieved by storing and compressing the vector potential separate from the scalar potential \cite{echeverribautista_hierarchical_2014}, or---in order to avoid these unnecessary numerical costs in construction time and memory consumption---by using a more refined fast scheme such as the one presented in \cite{andriulli_helmholtzstable_2012}. As an iterative solver, we used the \ac{CG} method for the new formulation and the \ac{CGS} method for the other formulations since the \ac{CG} method is only applicable if the matrix is \ac{HPD}. We note that a single iteration step of \ac{CGS} requires two matrix-vector products. We employed the AGMG library  \cite{notay_agmg_, napov_algebraic_2012} for the fast inversion of the graph Laplacians with solver tolerance \num{1e-14} to demonstrate that even for extreme small tolerances our preconditioner remains efficient. We have chosen a model of the Fokker Dr.I depicted in \Cref{fig:Fokker_Current}, which has \num{390} global loops. As excitation we considered both a plane wave and a voltage gap excitation. The model is discretized with \num{294420} unknowns resulting in a non-uniform mesh with $\cond \mat G_{\veg f \veg f} \approx \num{3e3} $ and $\cond \mat G_{pp} \approx \num{7e4}$. Its electric length is $\num{1e-3}\lambda$. From \Cref{tab:Fokker}, we can see that there is not only a significant reduction of the number of iterations, but also of time. 

\begin{figure}
    \centering
    \includegraphics[scale=0.2]{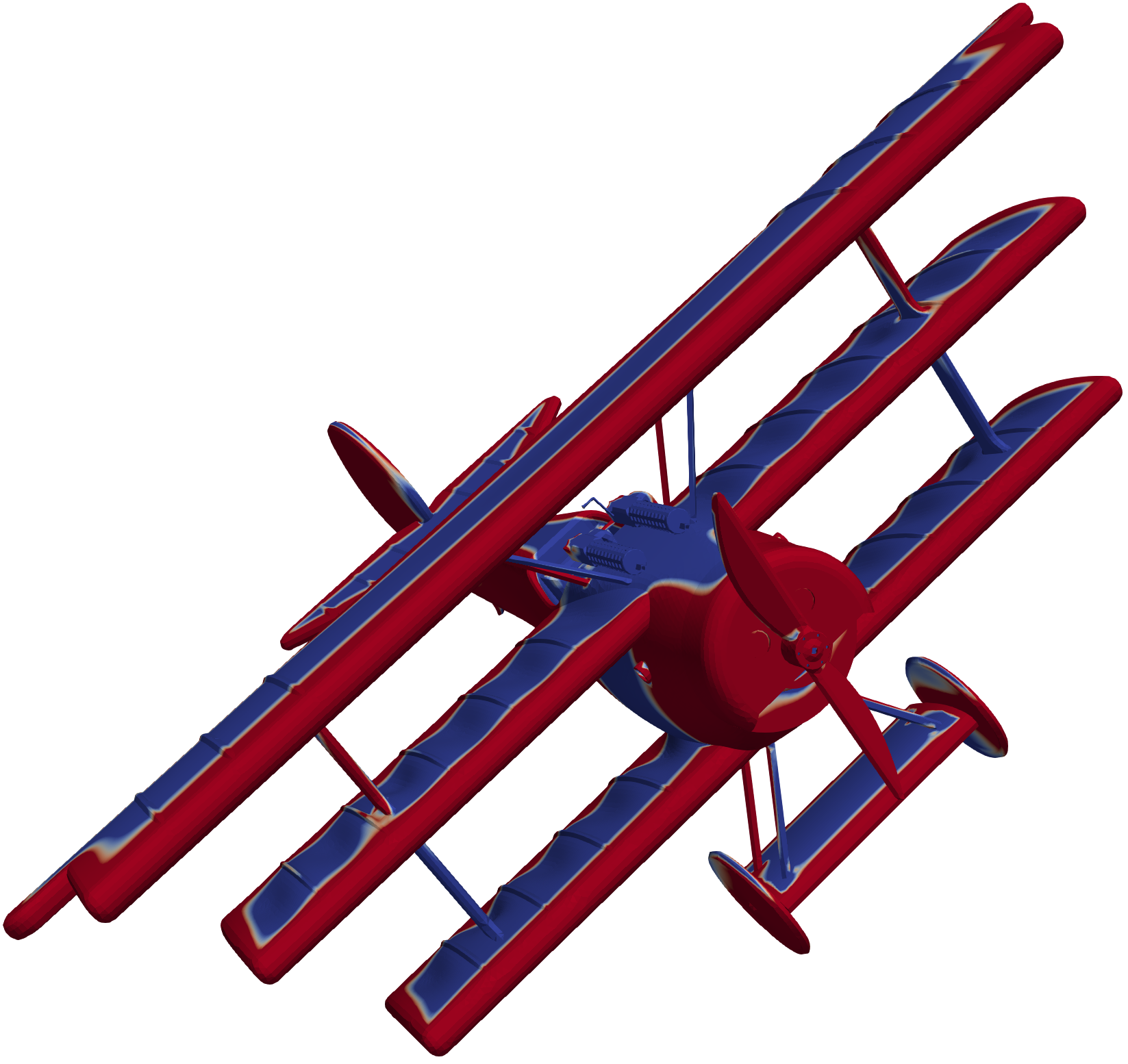}
    \caption{Fokker Dr.I: real part of $\veg j$ excited by an incident plane wave.}
    \label{fig:Fokker_Current}
\end{figure}

\begin{table}
    \centering
    \begin{minipage}{\linewidth}\centering
        \renewcommand{\footnoterule} {}
        \renewcommand{\thempfootnote}{\fnsymbol{mpfootnote}}
        \begin{tabular}{@{}*{2}{L S L S S}@{}}
            \toprule 
            \multicolumn{1}{N}{\small Preconditioner} &%
            \multicolumn{1}{N}{\small Iterations} &
            \multicolumn{1}{N}{\small Time\mpfootnotemark[1]} &
            \multicolumn{2}{N}{\small $l^2$-relative error\mpfootnotemark[2]} 
            \\
            \cmidrule(l){4-5}
            \multicolumn{3}{N}{} &
            \multicolumn{1}{N}{\small Current} &
            \multicolumn{1}{N}{\small RCS } 
            \\
            \multicolumn{2}{N}{} &
            \multicolumn{1}{N}{\small (h:m:s)} &
            \multicolumn{1}{N}{\small (\%)} &
            \multicolumn{1}{N}{\small (\%)} 
            \\
            \cmidrule(r){1-1} \cmidrule(lr){2-2}  \cmidrule(lr){3-3}  \cmidrule(lr){4-4}  \cmidrule(l){5-5}
            \multicolumn{5}{@{}L}{\emph{Plane wave excitation}} \\
            \small Loop-tree &  \small  4846  & \formatedtime{31}{46}{57} &     \\
            \small This Work&  \small 201 & \formatedtime{05}{15}{25} & \small  0.62721 &  \small 0.0007 \\
            \multicolumn{5}{@{}L}{\emph{Voltage gap excitation}} \\
            \small Loop-tree &  \small  2309  & \formatedtime{15}{35}{07} &     \\
            \small This Work&  \small 26 & \formatedtime{00}{55}{08} & \small  0.0158 &  \small 1.4638 \\
            \bottomrule \addlinespace
        \end{tabular}
        \footnotetext[1]{This is the total time including the setup time for the preconditioners.}
        \footnotetext[2]{The relative error is with respect to the solution obtained by using the loop-tree preconditioner.}
    \end{minipage}
    \caption{Fokker Dr.I: the number of iterations and the time used by the solver to obtain a residual error below \num{1e-4}.}
    \label{tab:Fokker}
\end{table}

\section{Conclusion}
We presented a preconditioner for the \ac{EFIE} that yields a Hermitian, positive definite, and well-conditioned system matrix. Due to the applicability of the \ac{CG} method, there is a---at least theoretically---guaranteed convergence and the complexity for obtaining such a solution is $\mc O(N \log N)$ if a fast method is used to compress the \ac{EFIE} system matrix (and if it is assumed that the algebraic multigrid methods allow to solve Laplacian systems in $\mc O(N)$ complexity). Preliminary results indicate that an extension to the \ac{CFIE} is possible \cite{adrian_wellconditioned_2016}.

\section*{Acknowledgement}
This work has been funded in part by the European Research Council (ERC) 
under the European Union's Horizon 2020 research and innovation program 
(ERC project 321, grant No. 724846).

\appendix 
\section{Proof of Spectral Equivalence of the Discretized Hypersingular and Laplace-Beltrami Operator}
\label{sec:Appendix}
In the following, we give a proof for the spectral equivalence of $\hmat W$ and $\hmat \Delta$ in the case that we have a nested sequence of piecewise linear function spaces.
\begin{prop}\label{lem:SpectralEquivance}
    Let $\Gamma$ be the surface of a Lipschitz polyhedron $\Omega$. Let $X_{\lambda,j} \subset X_{\lambda,j+1}$, $j=0,\dots, J-1$, denote a nested sequence of piecewise linear function spaces obtained by uniform dyadic refinements of the initial mesh (i.e., by structured refinements). Moreover, the functions~$\lambda_i $ (defined in \eqref{eq:pwlf}) are in $X_{\lambda,J}$, $J \in \N$, and $N_\mr{V} = \dim\(  X_{\lambda,J}\)$ is the number of vertices of the mesh.
    Then
    \begin{equation}
        \vec x^\T \hmat \Delta \vec x  \asymp \vec x^\T \hmat W \mat G_{\lambda \lambda}^{-1} \hmat W \vec x \quad\text{for all~} \vec x\in \R^{N_\mr{V}}  \, ,\label{eq:NewSpectralEquivLapW}
    \end{equation}
    holds uniformly in $J$ (and thus in $h$), where the matrices $\hmat \Delta$ and $\hmat W$ are defined in \eqref{eq:DeflectedLaplacian} and \eqref{eq:DiscreteModW}, respectively.
\end{prop}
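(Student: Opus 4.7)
The plan is to reinterpret both sides of the claimed equivalence as squared Sobolev norms of $v_{\vec x} := \sum_i \vec x_i \lambda_i$, and then to compare those norms on the nested piecewise linear hierarchy by means of a BPX-type multilevel decomposition. First, I would use the ellipticity properties that come built in by the deflation terms: by construction $-\hat{\Laplace}_\Gamma$ is $H^1(\Gamma)$-elliptic and $\hat{\op W}$ is $H^{1/2}(\Gamma)$-elliptic, so their Galerkin matrices satisfy $\vec x^\T \hmat \Delta \vec x \asymp \|v_{\vec x}\|_{H^1(\Gamma)}^2$ and $\vec x^\T \hmat W \vec x \asymp \|v_{\vec x}\|_{H^{1/2}(\Gamma)}^2$, uniformly in $J$.

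Next I would interpret the product on the left-hand side. Let $Q_j : L^2(\Gamma) \to X_{\lambda, j}$ be the $L^2$-orthogonal projector, and observe that $\mat G_{\lambda\lambda}^{-1}\hmat W$ is precisely the matrix of the discrete operator $\hat{\op W}_J := Q_J \hat{\op W}\big|_{X_{\lambda,J}}$ expressed in the nodal basis. Consequently
\begin{equation}
    \vec x^\T \hmat W \mat G_{\lambda\lambda}^{-1} \hmat W \vec x \;=\; \|Q_J \hat{\op W} v_{\vec x}\|_{L^2(\Gamma)}^2,
\end{equation}
so the assertion reduces to proving $\|Q_J \hat{\op W} v_{\vec x}\|_{L^2(\Gamma)}^2 \asymp \|v_{\vec x}\|_{H^1(\Gamma)}^2$ for all $v_{\vec x} \in X_{\lambda, J}$, with constants independent of $J$.

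To establish this I would invoke the multilevel (BPX) norm equivalence that is available precisely because $X_{\lambda, 0} \subset X_{\lambda, 1} \subset \dots \subset X_{\lambda, J}$ is a quasi-uniform dyadic hierarchy on a Lipschitz polyhedron. Writing $W_j := Q_j - Q_{j-1}$ (with $Q_{-1} := 0$), one has, for the admissible range of $s$ covering both $s=1$ and $s=1/2$,
\begin{equation}
    \|v\|_{H^s(\Gamma)}^2 \;\asymp\; \sum_{j=0}^{J} h_j^{-2s}\,\|W_j v\|_{L^2(\Gamma)}^2 \qquad \forall v \in X_{\lambda, J}.
\end{equation}
Applied with $s=1$ on the right side of the target equivalence, and combined with $\|Q_J \hat{\op W} v_{\vec x}\|_{L^2}^2 = \sum_j \|W_j \hat{\op W} v_{\vec x}\|_{L^2}^2$ on the left (by $L^2$-orthogonality of detail spaces), the whole statement collapses to the level-wise estimate $\|W_j \hat{\op W} v_{\vec x}\|_{L^2(\Gamma)} \asymp h_j^{-1}\,\|W_j v_{\vec x}\|_{L^2(\Gamma)}$, which expresses exactly that $\hat{\op W}$ is an elliptic pseudodifferential operator of order $+1$ acting between the hierarchical pieces.

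The main obstacle is precisely this last level-wise estimate: $\hat{\op W}$ is nonlocal, so it is not block-diagonal in the hierarchy and couples different levels. Controlling those cross-level couplings requires a strengthened Cauchy-Schwarz inequality between $W_j$ and $W_k$ that decays geometrically in $|j-k|$; this is the standard technical ingredient of multilevel BEM preconditioning and it relies crucially on the structured dyadic refinement (and associated Jackson-Bernstein inverse/approximation estimates) hypothesized in the proposition. An equivalent and perhaps cleaner route that I would follow if the direct computation becomes cumbersome is to first prove the operator-level spectral equivalence $\hat{\op W} \asymp (-\hat{\Laplace}_\Gamma)^{1/2}$ on $X_{\lambda,J}$ via the multilevel representation, and then square the resulting Rayleigh inequality, which yields \eqref{eq:NewSpectralEquivLapW} after re-expressing both sides in matrix form.
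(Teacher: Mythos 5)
Your reduction of the right-hand side to $\|Q_J \hat{\op W} v_{\vec x}\|_{L^2(\Gamma)}^2$ is correct and cleanly isolates what must be proved, and your diagnosis that everything hinges on a multilevel norm equivalence over the nested hierarchy is exactly the mechanism the paper uses (via the Dahmen--Schneider wavelet basis rather than BPX projectors). The gap is that the level-wise estimate you reduce to, $\|W_j \hat{\op W} v\|_{L^2} \asymp h_j^{-1}\|W_j v\|_{L^2}$, is false as stated --- $W_j \hat{\op W} v$ depends on all of $v$, not on $W_j v$ alone --- and you defer precisely the part of the argument that is hard. Strengthened Cauchy--Schwarz controls the cross-level couplings from above and can therefore deliver the upper bound $\|Q_J\hat{\op W} v\|_{L^2}^2 \lesssim \|v\|_{H^1}^2$, but the lower bound is the delicate direction: one must exhibit a discrete test function $w \in X_{\lambda,J}$ with $(w, \hat{\op W} v)_{L^2} \gtrsim \|w\|_{L^2}\|v\|_{H^1}$, and the natural choice $w=v$ fails because interpolation gives $\|v\|_{H^{1/2}}^2 \lesssim \|v\|_{L^2}\|v\|_{H^1}$, i.e., the wrong direction. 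Closing this requires the full Riesz-basis property of the decomposition in $H^{1/2}$ (Jackson and Bernstein estimates in both directions, uniformly in $J$), which is the content of the wavelet stability theorem the paper cites and does not follow from ellipticity plus strengthened Cauchy--Schwarz alone.

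Your fallback route --- prove $\hat{\op W} \asymp (-\hat{\Laplace}_\Gamma)^{1/2}$ on $X_{\lambda,J}$ and then ``square the resulting Rayleigh inequality'' --- contains a second gap: for noncommuting symmetric positive definite matrices, $\vec x^\T \mat A \vec x \asymp \vec x^\T \mat B \vec x$ does not imply $\vec x^\T \mat A \mat C \mat A \vec x \asymp \vec x^\T \mat B \mat C \mat B \vec x$, so the composition $\hmat W \mat G_{\lambda\lambda}^{-1}\hmat W$ cannot be handled by squaring a single Rayleigh-quotient equivalence. The paper addresses the composition by working in one wavelet basis $\whmat \lambda$ that is \emph{simultaneously} stable in $L^2$, $H^{1/2}$, and $H^1$, so that $\mat G_{\lambda\lambda}$, $\hmat W$, and $\hmat \Delta$ are all spectrally equivalent to powers of the same diagonal matrix $\hmat D$ conjugated by $\whmat\lambda^{-1}$; the product then reduces to $\hmat D^{2}\cdot\hmat D^{0}\cdot\hmat D^{2}=\hmat D^{4}$, which is the $H^1$ scaling. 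If you want to complete your BPX variant you need the analogous simultaneous near-diagonalization of all three matrices in the detail decomposition (or a rigorous two-sided treatment of the cross terms); as written, the decisive step is asserted rather than proved.
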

\begin{proof}
    To prove this proposition, we leverage the stability results for the wavelet bases given in \cite{dahmen_elementbyelement_1999}. The bases presented therein are $H^s$-stable on globally Lipschitz continuous surfaces for $\abs{s} \leq 1$ \cite[p.~335]{dahmen_elementbyelement_1999}, which includes our case where we assumed that $\Gamma$ is the surface of a Lipschitz polyhedron.
    Let $\whmat \lambda \in \R^{N_\mr{V}\times N_\mr{V}}$ be the transformation matrix that maps the expansion coefficients of a function represented in terms of a wavelet basis $\widehat \lambda_{i}$ from \cite{dahmen_elementbyelement_1999} to the expansion coefficients of the same function represented in terms of piecewise linear basis $\lambda_i$.
    Then it follows from Theorem~ 2.1 in \cite{dahmen_elementbyelement_1999} (we note that in Theorem 2.1, the variable $\rho = 2$ for a nested sequence of piecewise linear functions as denoted in Remark~4.1 in \cite{dahmen_elementbyelement_1999}) that
    \begin{equation}
        \vec x^\T  \whmat \lambda^\T \hmat \Delta \whmat \lambda \vec x \asymp \vec x^\T \hmat D^{+2} \whmat \lambda^\T \mat G_{\lambda \lambda} \whmat \lambda  \hmat D^{+2} \vec x\quad\text{for all~} \vec x\in \R^{N_\mr{V}}\, ,  \label{eq:LaplaceHPree}
    \end{equation}
    and
    \begin{equation}
        \vec x^\T \whmat \lambda^\T \hmat W \whmat \lambda \vec x \asymp \vec x^\T \hmat D^{+1} \whmat \lambda^\T \mat G_{\lambda \lambda} \whmat \lambda  \hmat D^{+1} \vec x \quad\text{for all~} \vec x\in \R^{N_\mr{V}} \, , \label{eq:LaplaceWPree}
    \end{equation}
    where 
    \begin{equation}
        \matel{\hmat D}_{ii} = 2^{\hat l_\upLambda(i)/2}\, ,\quad i = 1, \dots, N_\mr{V}\, ,
    \end{equation}
    and the function $\hat l_\upLambda(i)$ returns the level on which the function $\widehat \lambda_{i}$ is defined. Equations \eqref{eq:LaplaceHPree} and \eqref{eq:LaplaceWPree} express the $H^1(\Gamma)$- and $H^{1/2}(\Gamma)$-stability, respectively, of the wavelet basis. It should be noted that the Sobolev spaces $\mathcal H^s(\Gamma)$ in Theorem 2.1 in \cite{dahmen_elementbyelement_1999} can be identified with “standard” $H^s(\Gamma)$ on globally Lipschitz continuous surfaces that are obtained via a partition of unity (see \cite[p.~335]{dahmen_elementbyelement_1999}). 
    
    If we assume that $\widehat \lambda_{i}$ are scaled\footnote{We note that if a realization of the wavelets is obtained following the description in \cite{dahmen_elementbyelement_1999}, then  $\norm{\widehat \lambda_{i}}_{L^2(\Gamma)} \asymp 1$ is, in general, not satisfied and thus the wavelets should be (trivially) rescaled to satisfy $\norm{\widehat \lambda_{i}}_{L^2(\Gamma)} \asymp 1$.} such that $\norm{\widehat \lambda_{i}}_{L^2(\Gamma)} \asymp 1$, then the $L^2(\Gamma)$-stability can be expressed as
    \begin{equation}
        \vec x^\T \vec x \asymp \vec x^\T \whmat \lambda^\T \mat G_{\lambda \lambda} \whmat \lambda  \vec x       \stackrel{\eqref{eq:Nodall2Stability}}{\asymp} \vec x^\T \whmat \lambda^\T \whmat \lambda  \vec x \, h^2 \quad\text{for all~} \vec x \in \R^{N_\mr{V}}\, . \label{eq:L2stability}
    \end{equation}
    Since all matrices in \eqref{eq:L2stability} are invertible, we (trivially) obtain
    \begin{equation}
        \vec x^\T  \vec x \asymp \vec x^\T \whmat \lambda^{-1}  \whmat \lambda^{-\T}  \vec x  / h^2 \quad\text{for all~} \vec x \in \R^{N_\mr{V}}\, . \label{eq:L2stabilityTrivial}
    \end{equation}
    Furthermore, we note that using \eqref{eq:L2stability} allows to simplify \eqref{eq:LaplaceHPree} and \eqref{eq:LaplaceWPree}  to 
    \begin{equation}
        \vec x^\T  \whmat \lambda^\T \hmat \Delta \whmat \lambda \vec x \asymp \vec x^\T \hmat D^4 \vec x \quad\text{for all~} \vec x\in \R^{N_\mr{V}}\, ,  \label{eq:LaplaceHPreeSimplfied}
    \end{equation}
    and
    \begin{equation}
        \vec x^\T \whmat \lambda^\T \hmat W \whmat \lambda \vec x \asymp \vec x^\T \hmat D^{2}\vec x  \quad\text{for all~} \vec x\in \R^{N_\mr{V}} \, . \label{eq:LaplaceWPreeSimplified}
    \end{equation}
    By using the substitution $\vec y  = \whmat \lambda  \vec x$ in \eqref{eq:LaplaceHPreeSimplfied} and \eqref{eq:LaplaceWPreeSimplified}, we obtain
    \begin{equation}
        \vec x^\T \hmat \Delta \vec x  \asymp  \vec x^\T \whmat \lambda^{-\T} \hmat D^{+4} \whmat \lambda^{-1}\vec x \quad\text{for all~} \vec x \in \R^{N_\mr{V}} \, , \label{eq:NewHowToPreconditionSqrtRootLap}
    \end{equation}
    and
    \begin{equation}
        \vec x^\T \hmat W \vec x \asymp  \vec x^\T \whmat \lambda^{-\T} \hmat D^{+2}  \whmat \lambda^{-1}\vec x  \quad\text{for all~} \vec x \in \R^{N_\mr{V}}\, . \label{eq:WSpectralEquiv}
    \end{equation}
    Summarizing, we obtain the spectral equivalence
    \begin{multline}
        \vec x^\T \hmat W \mat G_{\lambda \lambda}^{-1} \hmat W \vec x \stackrel{\eqref{eq:Nodall2Stability}}{\asymp}  \vec x^\T \hmat W^2 \vec x  / h^2 
        \stackrel{\eqref{eq:WSpectralEquiv}}{\asymp} \vec x^\T \(\whmat \lambda^{-\T} \hmat D^{+2}  \whmat \lambda^{-1} \)^2 \vec x / h^2 \\ \asymp \vec x^\T \whmat \lambda^{-\T} \hmat D^{+2}  \whmat \lambda^{-1} \whmat \lambda^{-\T} \hmat D^{+2}  \whmat \lambda^{-1}  \vec x / h^2  
        \stackrel{\eqref{eq:L2stabilityTrivial}}{\asymp}\vec x^\T \whmat \lambda^{-\T} \hmat D^{+4}  \whmat \lambda^{-1}  \vec x    
        \stackrel{\eqref{eq:NewHowToPreconditionSqrtRootLap}}{\asymp}   \vec x^\T \hmat \Delta \vec x \quad\text{for all~} \vec x \in \R^{N_\mr{V}}\, .
    \end{multline}
\end{proof}
    
    \bibliographystyle{elsarticle-num}
    \bibliography{Literature}

\begin{thebibliography}{10}
\expandafter\ifx\csname url\endcsname\relax
  \def\url#1{\texttt{#1}}\fi
\expandafter\ifx\csname urlprefix\endcsname\relax\def\urlprefix{URL }\fi
\expandafter\ifx\csname href\endcsname\relax
  \def\href#1#2{#2} \def\path#1{#1}\fi

\bibitem{nedelec_acoustic_2001}
J.-C. N{\'e}d{\'e}lec, Acoustic and {{Electromagnetic Equations}}: {{Integral
  Representations}} for {{Harmonic Problems}}, no. 144 in Applied Mathematical
  Sciences, {Springer}, New York, 2001.

\bibitem{saad_iterative_2003}
Y.~Saad, Iterative Methods for Sparse Linear Systems, 2nd Edition, {SIAM},
  Philadelphia, 2003.

\bibitem{wilton_improving_1981}
D.~R. Wilton, A.~W. Glisson, On {{Improving}} the {{Stability}} of the
  {{Electric Field Integral Equation}} at {{Low Frequencies}}, in:
  {{USNC}}/{{URSI Spring Meeting Digest}}, 1981, p.~24.

\bibitem{wu_study_1995}
W.-L. Wu, A.~W. Glisson, D.~Kajfez, A {{Study}} of {{Two Numerical Solution
  Procedures}} for the {{Electric Field Integral Equation}}, Applied
  Computational Electromagnetics Society Journal 10~(3) (1995) 69--80.

\bibitem{burton_study_1995}
M.~Burton, S.~Kashyap, A {{Study}} of a {{Recent}}, {{Moment}}-{{Method
  Algorithm That}} is {{Accurate}} to {{Very Low Frequencies}}, Applied
  Computational Electromagnetics Society Journal 10~(3) (1995) 58--68.

\bibitem{vecchi_loopstar_1999}
G.~Vecchi, Loop-{{Star Decomposition}} of {{Basis Functions}} in the
  {{Discretization}} of the {{EFIE}}, IEEE Transactions on Antennas and
  Propagation 47~(2) (1999) 339--346.
\newblock \href {http://dx.doi.org/10.1109/8.761074}
  {\path{doi:10.1109/8.761074}}.

\bibitem{miano_surface_2005}
G.~Miano, F.~Villone, A surface integral formulation of maxwell equations for
  topologically complex conducting domains, IEEE Transactions on Antennas and
  Propagation 53~(12) (2005) 4001--4014.
\newblock \href {http://dx.doi.org/10.1109/TAP.2005.859898}
  {\path{doi:10.1109/TAP.2005.859898}}.

\bibitem{andriulli_loopstar_2012}
F.~Andriulli, Loop-{{Star}} and {{Loop}}-{{Tree Decompositions}}: {{Analysis}}
  and {{Efficient Algorithms}}, IEEE Transactions on Antennas and Propagation
  60~(5) (2012) 2347--2356.
\newblock \href {http://dx.doi.org/10.1109/TAP.2012.2189723}
  {\path{doi:10.1109/TAP.2012.2189723}}.

\bibitem{zhao_integral_2000}
J.-S. Zhao, W.~C. Chew, Integral equation solution of {{Maxwell}}'s equations
  from zero frequency to microwave frequencies, IEEE Transactions on Antennas
  and Propagation 48~(10) (2000) 1635--1645.
\newblock \href {http://dx.doi.org/10.1109/8.899680}
  {\path{doi:10.1109/8.899680}}.

\bibitem{qian_augmented_2008}
Z.~G. Qian, W.~C. Chew, An {{Augmented Electric Field Integral Equation}} for
  {{High}}-{{Speed Interconnect Analysis}}, Microwave and Optical Technology
  Letters 50~(10) (2008) 2658--2662.
\newblock \href {http://dx.doi.org/10.1002/mop.23736}
  {\path{doi:10.1002/mop.23736}}.

\bibitem{cheng_augmented_2015}
J.~Cheng, R.~J. Adams, J.~C. Young, M.~A. Khayat, Augmented {{EFIE With
  Normally Constrained Magnetic Field}} and {{Static Charge Extraction}}, IEEE
  Transactions on Antennas and Propagation 63~(11) (2015) 4952--4963.
\newblock \href {http://dx.doi.org/10.1109/TAP.2015.2478936}
  {\path{doi:10.1109/TAP.2015.2478936}}.

\bibitem{sertel_incomplete_2000}
K.~Sertel, J.~L. Volakis, Incomplete {{LU Preconditioner}} for {{FMM
  Implementation}}, Microwave and Optical Technology Letters 26~(4) (2000)
  265--267.
\newblock \href
  {http://dx.doi.org/10.1002/1098-2760(20000820)26:4<265::AID-MOP18>3.0.CO;2-O}
  {\path{doi:10.1002/1098-2760(20000820)26:4<265::AID-MOP18>3.0.CO;2-O}}.

\bibitem{carpentieri_symmetric_2012}
B.~Carpentieri, M.~Bollh{\"o}fer, Symmetric {{Inverse}}-{{Based Multilevel ILU
  Preconditioning}} for {{Solving Dense Complex Non}}-{{Hermitian Systems}} in
  {{Electromagnetics}}, Progress In Electromagnetics Research 128 (2012)
  55--74.
\newblock \href {http://dx.doi.org/10.2528/PIER12041006}
  {\path{doi:10.2528/PIER12041006}}.

\bibitem{carpentieri_combining_2005}
B.~Carpentieri, I.~S. Duff, L.~Giraud, G.~Sylvand, Combining {{Fast Multipole
  Techniques}} and an {{Approximate Inverse Preconditioner}} for {{Large
  Electromagnetism Calculations}}, SIAM Journal on Scientific Computing 27~(3)
  (2005) 774--792.
\newblock \href {http://dx.doi.org/10.1137/040603917}
  {\path{doi:10.1137/040603917}}.

\bibitem{pan_sparse_2014}
X.-M. Pan, X.-Q. Sheng, Sparse {{Approximate Inverse Preconditioner}} for
  {{Multiscale Dynamic Electromagnetic Problems}}, Radio Science 49~(11) (2014)
  1041--1051.
\newblock \href {http://dx.doi.org/10.1002/2014RS005387}
  {\path{doi:10.1002/2014RS005387}}.

\bibitem{wiedenmann_effect_2013}
O.~Wiedenmann, T.~F. Eibert, The {{Effect}} of {{Near}}-{{Zone
  Preconditioning}} on {{Electromagnetic Integral Equations}} of {{First}} and
  {{Second Kind}}, Advances in Radio Science 11 (2013) 61--65.
\newblock \href {http://dx.doi.org/10.5194/ars-11-61-2013}
  {\path{doi:10.5194/ars-11-61-2013}}.

\bibitem{vipiana_multiresolution_2005}
F.~Vipiana, P.~Pirinoli, G.~Vecchi, A {{Multiresolution Method}} of {{Moments}}
  for {{Triangular Meshes}}, IEEE Transactions on Antennas and Propagation
  53~(7) (2005) 2247--2258.
\newblock \href {http://dx.doi.org/10.1109/TAP.2005.850710}
  {\path{doi:10.1109/TAP.2005.850710}}.

\bibitem{andriulli_multiresolution_2007}
F.~P. Andriulli, A.~Tabacco, G.~Vecchi, A {{Multiresolution Approach}} to the
  {{Electric Field Integral Equation}} in {{Antenna Problems}}, SIAM Journal on
  Scientific Computing 29~(1) (2007) 1--21.
\newblock \href {http://dx.doi.org/10.1137/050634943}
  {\path{doi:10.1137/050634943}}.

\bibitem{chen_multiresolution_2009}
R.-S. Chen, J.~Ding, D.~Ding, Z.~Fan, D.~Wang, A {{Multiresolution Curvilinear
  Rao}}-{{Wilton}}-{{Glisson Basis Function}} for {{Fast Analysis}} of
  {{Electromagnetic Scattering}}, IEEE Transactions on Antennas and Propagation
  57~(10) (2009) 3179--3188.
\newblock \href {http://dx.doi.org/10.1109/TAP.2009.2028599}
  {\path{doi:10.1109/TAP.2009.2028599}}.

\bibitem{andriulli_solving_2010}
F.~P. Andriulli, A.~Tabacco, G.~Vecchi, Solving the {{EFIE}} at {{Low
  Frequencies}} with a {{Conditioning That Grows Only Logarithmically}} with
  the {{Number}} of {{Unknowns}}, IEEE Transactions on Antennas and Propagation
  58~(5) (2010) 1614--1624.
\newblock \href {http://dx.doi.org/10.1109/TAP.2010.2044325}
  {\path{doi:10.1109/TAP.2010.2044325}}.

\bibitem{andriulli_hierarchical_2008}
F.~P. Andriulli, F.~Vipiana, G.~Vecchi, Hierarchical {{Bases}} for
  {{Nonhierarchic}} 3-{{D Triangular Meshes}}, IEEE Transactions on Antennas
  and Propagation 56~(8) (2008) 2288--2297.
\newblock \href {http://dx.doi.org/10.1109/TAP.2008.926756}
  {\path{doi:10.1109/TAP.2008.926756}}.

\bibitem{adrian_hierarchical_2017}
S.~B. Adrian, F.~P. Andriulli, T.~F. Eibert, A {{Hierarchical Preconditioner}}
  for the {{Electric Field Integral Equation}} on {{Unstructured Meshes Based}}
  on {{Primal}} and {{Dual Haar Bases}}, Journal of Computational Physics 330
  (2017) 365--379.
\newblock \href {http://dx.doi.org/10.1016/j.jcp.2016.11.013}
  {\path{doi:10.1016/j.jcp.2016.11.013}}.

\bibitem{christiansen_preconditioner_2002}
S.~H. Christiansen, J.-C. N{\'e}d{\'e}lec, A {{Preconditioner}} for the
  {{Electric Field Integral Equation Based}} on {{Calderon Formulas}}, SIAM
  Journal on Numerical Analysis 40~(3) (2002) 1100--1135.
\newblock \href {http://dx.doi.org/10.1137/S0036142901388731}
  {\path{doi:10.1137/S0036142901388731}}.

\bibitem{buffa_dual_2007}
A.~Buffa, S.~H. Christiansen, A {{Dual Finite Element Complex}} on the
  {{Barycentric Refinement}}, Mathematics of Computation 76~(260) (2007)
  1743--1770.
\newblock \href {http://dx.doi.org/10.1090/S0025-5718-07-01965-5}
  {\path{doi:10.1090/S0025-5718-07-01965-5}}.

\bibitem{andriulli_multiplicative_2008}
F.~P. Andriulli, K.~Cools, H.~Bagci, F.~Olyslager, A.~Buffa, S.~Christiansen,
  E.~Michielssen, A {{Multiplicative Calderon Preconditioner}} for the
  {{Electric Field Integral Equation}}, IEEE Transactions on Antennas and
  Propagation 56~(8) (2008) 2398--2412.
\newblock \href {http://dx.doi.org/10.1109/TAP.2008.926788}
  {\path{doi:10.1109/TAP.2008.926788}}.

\bibitem{contopanagos_wellconditioned_2002}
H.~Contopanagos, B.~Dembart, M.~Epton, J.~Ottusch, V.~Rokhlin, J.~Visher,
  S.~Wandzura, Well-conditioned boundary integral equations for
  three-dimensional electromagnetic scattering, IEEE Transactions on Antennas
  and Propagation 50~(12) (2002) 1824--1830.
\newblock \href {http://dx.doi.org/10.1109/TAP.2002.803956}
  {\path{doi:10.1109/TAP.2002.803956}}.

\bibitem{adams_physical_2004}
R.~Adams, Physical and {{Analytical Properties}} of a {{Stabilized Electric
  Field Integral Equation}}, IEEE Transactions on Antennas and Propagation
  52~(2) (2004) 362--372.
\newblock \href {http://dx.doi.org/10.1109/TAP.2004.823957}
  {\path{doi:10.1109/TAP.2004.823957}}.

\bibitem{zhang_magnetic_2003}
Y.~Zhang, T.~J. Cui, W.~C. Chew, J.-S. Zhao, Magnetic field integral equation
  at very low frequencies, IEEE Transactions on Antennas and Propagation 51~(8)
  (2003) 1864--1871.
\newblock \href {http://dx.doi.org/10.1109/TAP.2003.814753}
  {\path{doi:10.1109/TAP.2003.814753}}.

\bibitem{adrian_hierarchical_2014a}
S.~B. Adrian, F.~P. Andriulli, T.~F. Eibert, Hierarchical {{Bases
  Preconditioners}} for the {{Electric Field Integral Equation}} on {{Multiply
  Connected Geometries}}, IEEE Transactions on Antennas and Propagation 62~(11)
  (2014) 5856--5861.

\bibitem{andriulli_wellconditioned_2013a}
F.~P. Andriulli, K.~Cools, I.~Bogaert, E.~Michielssen, On a
  {{Well}}-{{Conditioned Electric Field Integral Operator}} for {{Multiply
  Connected Geometries}}, IEEE Transactions on Antennas and Propagation 61~(4)
  (2013) 2077--2087.
\newblock \href {http://dx.doi.org/10.1109/TAP.2012.2234072}
  {\path{doi:10.1109/TAP.2012.2234072}}.

\bibitem{adrian_calderon_2014}
S.~B. Adrian, F.~P. Andriulli, T.~F. Eibert, A {{Calder{\'o}n Preconditioner}}
  for the {{EFIE Operator Without Barycentric Refinement}} of the {{Mesh}}, in:
  {{IEEE International Symposium}} on {{Antennas}} and {{Propagation}}, {IEEE},
  Memphis, USA, 2014, pp. 2180--2181.
\newblock \href {http://dx.doi.org/10.1109/APS.2014.6905417}
  {\path{doi:10.1109/APS.2014.6905417}}.

\bibitem{adrian_hermitian_2015}
S.~B. Adrian, F.~P. Andriulli, T.~F. Eibert, A {{Hermitian}} and
  {{Well}}-{{Conditioned EFIE}} for {{Fast Iterative}} and {{Direct Solvers}},
  in: {{IEEE Antennas}} and {{Propagation International Symposium}}, Vancouver,
  Canada, 2015, pp. 742--743.

\bibitem{silver_microwave_1984}
S.~Silver (Ed.), Microwave Antenna Theory and Design, no.~19 in IEE
  electromagnetic waves series, {P. Peregrinus on behalf of the Institution of
  Electrical Engineers}, London, UK, 1984.

\bibitem{muller_grundzuge_1948}
C.~M{\"u}ller, {Die Grundz{\"u}ge einer mathematischen Theorie
  elektromagnetischer Schwingungen}, Archiv der Mathematik 1~(4) (1948)
  296--302.
\newblock \href {http://dx.doi.org/10.1007/BF02038758}
  {\path{doi:10.1007/BF02038758}}.

\bibitem{rao_electromagnetic_1982}
S.~Rao, D.~Wilton, A.~Glisson, Electromagnetic {{Scattering}} by {{Surfaces}}
  of {{Arbitrary Shape}}, IEEE Transactions on Antennas and Propagation 30~(3)
  (1982) 409--418.
\newblock \href {http://dx.doi.org/10.1109/TAP.1982.1142818}
  {\path{doi:10.1109/TAP.1982.1142818}}.

\bibitem{cools_nullspaces_2009}
K.~Cools, F.~Andriulli, F.~Olyslager, E.~Michielssen, Nullspaces of {{MFIE}}
  and {{Calder{\'o}n Preconditioned EFIE Operators Applied}} to {{Toroidal
  Surfaces}}, IEEE Transactions on Antennas and Propagation 57~(10) (2009)
  3205--3215.
\newblock \href {http://dx.doi.org/10.1109/TAP.2009.2028669}
  {\path{doi:10.1109/TAP.2009.2028669}}.

\bibitem{steinbach_numerical_2010}
O.~Steinbach, Numerical {{Approximation Methods}} for {{Elliptic Boundary Value
  Problems}}: {{Finite}} and {{Boundary Elements}}, {Springer Science +
  Business Media}, New York, 2010.

\bibitem{hiptmair_natural_2002}
R.~Hiptmair, C.~Schwab, Natural {{Boundary Element Methods}} for the {{Electric
  Field Integral Equation}} on {{Polyhedra}}, SIAM Journal on Numerical
  Analysis 40~(1) (2002) 66--86.
\newblock \href {http://dx.doi.org/10.1137/S0036142901387580}
  {\path{doi:10.1137/S0036142901387580}}.

\bibitem{steinbach_construction_1998}
O.~Steinbach, W.~L. Wendland, The {{Construction}} of {{Some Efficient
  Preconditioners}} in the {{Boundary Element Method}}, Advances in
  Computational Mathematics 9~(1-2) (1998) 191--216.
\newblock \href {http://dx.doi.org/10.1023/A:1018937506719}
  {\path{doi:10.1023/A:1018937506719}}.

\bibitem{hiptmair_operator_2006}
R.~Hiptmair, Operator {{Preconditioning}}, Computers \& Mathematics with
  Applications 52~(5) (2006) 699--706.
\newblock \href {http://dx.doi.org/10.1016/j.camwa.2006.10.008}
  {\path{doi:10.1016/j.camwa.2006.10.008}}.

\bibitem{livne_lean_2012}
O.~E. Livne, A.~Brandt, Lean {{Algebraic Multigrid}} ({{LAMG}}): {{Fast Graph
  Laplacian Linear Solver}}, SIAM Journal on Scientific Computing 34~(4) (2012)
  499--522.
\newblock \href {http://dx.doi.org/10.1137/110843563}
  {\path{doi:10.1137/110843563}}.

\bibitem{kelner_simple_2013}
J.~A. Kelner, L.~Orecchia, A.~Sidford, Z.~A. Zhu, A {{Simple}}, {{Combinatorial
  Algorithm}} for {{Solving SDD Systems}} in {{Nearly}}-{{Linear Time}}, in:
  Proceedings of the Forty-Fifth Annual ACM Symposium on Theory of Computing,
  {ACM Press}, 2013, p. 911.
\newblock \href {http://dx.doi.org/10.1145/2488608.2488724}
  {\path{doi:10.1145/2488608.2488724}}.

\bibitem{echeverribautista_hierarchical_2014}
M.~A. Echeverri~Bautista, M.~A. Francavilla, F.~Vipiana, G.~Vecchi, A
  {{Hierarchical Fast Solver}} for {{EFIE}}-{{MoM Analysis}} of {{Multiscale
  Structures}} at {{Very Low Frequencies}}, IEEE Transactions on Antennas and
  Propagation 62~(3) (2014) 1523--1528.
\newblock \href {http://dx.doi.org/10.1109/TAP.2013.2296327}
  {\path{doi:10.1109/TAP.2013.2296327}}.

\bibitem{andriulli_helmholtzstable_2012}
F.~P. Andriulli, G.~Vecchi, A {{Helmholtz}}-{{Stable Fast Solution}} of the
  {{Electric Field Integral Equation}}, IEEE Transactions on Antennas and
  Propagation 60~(5) (2012) 2357--2366.
\newblock \href {http://dx.doi.org/10.1109/TAP.2012.2189693}
  {\path{doi:10.1109/TAP.2012.2189693}}.

\bibitem{notay_agmg_}
Y.~Notay, {{AGMG Software}} and {{Documentation}}.

\bibitem{napov_algebraic_2012}
A.~Napov, Y.~Notay, An {{Algebraic Multigrid Method}} with {{Guaranteed
  Convergence Rate}}, SIAM Journal on Scientific Computing 34~(2) (2012)
  1079--1109.
\newblock \href {http://dx.doi.org/10.1137/100818509}
  {\path{doi:10.1137/100818509}}.

\bibitem{adrian_wellconditioned_2016}
S.~B. Adrian, F.~P. Andriulli, T.~F. Eibert, A {{Well}}-{{Conditioned}},
  {{Hermitian}}, {{Positive Definite}}, {{Combined Field Integral Equation}}
  for {{Simply}} and {{Multiply Connected Geometries}}, in: {{URSI
  International Symposium}} on {{Electromagnetic Theory}} ({{EMTS}}), {IEEE},
  Espoo, Finland, 2016, pp. 561--564.
\newblock \href {http://dx.doi.org/10.1109/URSI-EMTS.2016.7571454}
  {\path{doi:10.1109/URSI-EMTS.2016.7571454}}.

\bibitem{dahmen_elementbyelement_1999}
W.~Dahmen, R.~Stevenson, Element-by-{{Element Construction}} of {{Wavelets
  Satisfying Stability}} and {{Moment Conditions}}, SIAM Journal on Numerical
  Analysis 37~(1) (1999) 319--352.
\newblock \href {http://dx.doi.org/10.1137/S0036142997330949}
  {\path{doi:10.1137/S0036142997330949}}.

\end{thebibliography}

\end{document}